\documentclass[11pt, reqno]{amsart} 
\usepackage[inner=2cm, outer=2cm, top=2cm, bottom=2cm, headsep=0.8cm, footskip=1.5cm]{geometry}
\usepackage{xcolor}
\usepackage{soul}
\usepackage{comment}
\usepackage{amsmath,amssymb}
\usepackage[cal=boondox,scr=boondoxo]{mathalfa}
\usepackage{bigints}
\usepackage{todonotes}
\presetkeys{todonotes}{color=orange!30}{}


\usepackage{amsmath} 

\usepackage{hyperref} 


\newtheorem{theo}{Theorem}[section]
\usepackage{yfonts}

\newtheorem{lemma}{Lemma}[section]
\newtheorem{prop}{Proposition}[section]
  \newtheorem{rem}{Remark}[section]
\newtheorem{definition}{Definition}[section]
\newtheorem{exa}{Example}[section]

\DeclareMathAlphabet{\mathpzc}{OT1}{pzc}{m}{it} 

\newcommand{\be}{\begin{eqnarray*}}
\newcommand{\ee}{\end{eqnarray*}}
\newcommand{\ben}{\begin{eqnarray}}
\newcommand{\een}{\end{eqnarray}}

\def \R{\mathbb R}
\def \N{\mathbb N}

\def \B{\mathbb B}

\def \A{\mathcal A}

\def \T{\mathcal T}
\def\V{\mathcal{V}}
\def\W{\mathcal{W}}
\def\F{\mathcal{F}}

\def \vsm{\vskip 0.2 truecm}
\def \ds{\displaystyle}

\def\bel{\begin{equation}\label}
\def\eeq{\end{equation}}
\def \w{\omega}
\def \xb{\check x_0}



\begin{document}
\title[Abnormality, controllability, and gap]{Nondegenerate abnormality, controllability,  and gap phenomena  in optimal control  with state constraints} 
\thanks{This research is partially supported by the  INdAM-GNAMPA Project 2020 ``Extended control problems: gap, higher order conditions and   Lyapunov functions"  and by the Padua University grant SID 2018 ``Controllability, stabilizability and infimum gaps for control systems'', prot. BIRD 187147.}

\author{Giovanni Fusco}\address{G. Fusco, Dipartimento di Matematica,
Universit\`a di Padova\\ Via Trieste 63, Padova  35121, Italy\\
email:\,
fusco@math.unipd.it}
\author{Monica Motta}\address{M. Motta, Dipartimento di Matematica,
Universit\`a di Padova\\ Via Trieste 63, Padova  35121, Italy\\
Telefax: (+39) 049 827 1499,\,\, Telephone: (39)(49) 827 1368
email:\,
motta@math.unipd.it}

\begin{abstract}
In optimal control theory, {\em infimum gap} means that  there is a gap between the infimum  values of a given minimum problem and an extended problem, obtained by enlarging the set of original solutions and controls.  The   gap phenomenon  is somewhat ``dual"  to the problem of the {\em controllability} of the original  control system  to an extended solution. In this paper we present sufficient conditions  for the absence of an infimum gap and for controllability for a wide  class of   optimal control problems subject to endpoint and state constraints. These conditions are based on a  nondegenerate version of the  nonsmooth constrained maximum principle, expressed in terms of subdifferentials.  In particular,  under some new constraint qualification conditions, we prove that:  (i) if an  extended minimizer is a  {\em nondegenerate normal} extremal,
then no gap shows up; (ii) given an extended solution verifying the constraints,  either it is a {\em nondegenerate abnormal} extremal, or 
the original system is controllable to it.   An application to the impulsive extension of a free end-time, non-convex  optimization problem with control-polynomial dynamics illustrates the results. 
\end{abstract}

\subjclass[2020]{49K15, 34K45, 49N25}
\keywords{Optimal control problems,  Maximum Principle, State constraints, Gap phenomena,  Controllability, Nondegeneracy  }
\maketitle

\section{Introduction}
In the Calculus of Variations and in the Theory of Optimal Control is a rather common procedure to enlarge the space of solutions for those problems that do not admit a solution in a, say, ordinary space. Of course, a fundamental requirement for a good extension is that there is no  gap between the infimum of the original problem and that of the extended problem.
However, even if the set of ordinary solutions is $C^0$-dense in the set of extended trajectories,  in the presence of constraints   an infimum gap does in general occur, whenever all ordinary solutions in a $C^0$-neighborhood of a feasible  extended  trajectory (a local extended minimizer, for instance) violate the constraints.   In this case, we will refer to the  extended trajectory as {\em isolated}. By defining the   original   control system   {\em controllable} to an  extended trajectory whenever the trajectory is not isolated, we see that   gap avoidance and controllability are   strictly related issues.    Since Warga's early works  \cite{warga83_1,warga83_2}, it has emerged that  the existence of an infimum gap, or better, following our terminology, the fact that an extended trajectory is isolated,  is related to the validity of a maximum principle in {\em abnormal} form (as customary, abnormality  means that the scalar multiplier associated to the cost  is zero).  In particular, results of this kind have been obtained for the classical extension by relaxation (convex \cite{PV1,PV2} or in measure \cite{warga83_2,Kas99}) and, more recently, for the impulsive extension of control-affine systems with unbounded controls, with    or without state constraints (see \cite{FM120},  \cite{MRV}, respectively). Let us also mention   \cite{PR20}, where a general extension  is considered, but in the absence of state constraints and for smooth data.
These results are obtained by different techniques, which essentially reflect two different approaches to the maximum principle:  approach (a),   based on  the construction of approximating cones to reachable sets and on set separation arguments  \cite{warga83_1,warga83_2,Kas99,PR20}; approach (b), which   makes use of perturbation  and penalization techniques and of the Ekeland’s variational principle  \cite{PV1,PV2,MRV,FM120}.  For nonsmooth   optimal control problems, methods (a), (b) are  not easily comparable, as they require different assumptions on   dynamics and target but, above all,  lead to different abnormality conditions,  which involve the `derivative containers' introduced in   \cite{warga83_2} or the `quasi-differential quotients' defined in 	\cite{PR20}  in case (a), while in case (b) one uses a  by now   standard form of the nonsmooth constrained maximum principle due to Clarke,  expressed in terms of subdifferentials (see \cite{C90}).
  
\vsm 
The main purpose of this paper is to extend approach (b), applied so far only to particular cases, to identify under which general assumptions for the extension of an optimal control problem the following  statement is valid: {\em an isolated extended trajectory is an abnormal extremal.}  Furthermore,  we  give sufficient conditions  for  which we prove the stronger result: {\em an isolated extended trajectory is   an abnormal extremal of a nondegenerate version of the maximum principle. }

 \vsm
Precisely,  we consider the optimization problem  
$$
(P)
\left\{
\parbox[c][2cm]{.92\textwidth}{%
\begin{align}
&\qquad\qquad\text{minimize} \,\,\,   \Psi(y(S)) \qquad\qquad\qquad\qquad\qquad\qquad\qquad\qquad\qquad\qquad\text{ }\nonumber
 \\
&\text{over 
$(\w,\alpha,y)\in \V(S)\times\A(S) \times W^{1,1}([0,S];\R^n)$, verifying} \nonumber
\\
&\dot y(s)=\F(s,y(s),\w(s),\alpha(s)) \ \text{a.e.,}  \quad y(0)=\xb, \nonumber  \\
&h(s,y(s))\le0 \quad \forall s\in[0,S], \qquad 
y(S)\in\T, \nonumber
\end{align}}
\right.
$$
 where $\V(S):=L^1([0,S];V)$, $\A(S):=L^1([0,S];A)$,  
and the {\em extended} optimization problem, say $(P_e)$,  which is obtained by $(P)$   replacing in the minimization the control set $\V(S)$ with the larger set   $\W(S):=L^1([0,S];W)$, where $W=\overline{V}$. The data comprise the functions $\Psi:\R^n\to\R$, $\F:\R\times\R^n\times W\times A \to\R^n$, $h:\R\times\R^n\to\R$, the bounded set  $V\subset\R^m$, the compact set $A\subset\R^q$,  and the closed set $\T\subset\R^n$. 
 We  refer to any triple  $(\w,\alpha,y)$, where  $(\w,\alpha)\in \W(S)\times\A(S)$  and $y$ solves  the  Cauchy problem
 \bel{E}
 \dot y(s)=\F(s,y(s),\w(s),\alpha(s)) \quad  \text{a.e.,}  \qquad y(0)=\xb,
 \eeq
 as   {\em extended process}  or simply {\em process}. A process $(\w,\alpha,y)$ is  {\em feasible}  if  $h(s,y(s))\le0$   for all $s\in[0,S]$  and $y(S)\in\T$. 
The processes  $(\w,\alpha,y)$ of $(P)$, where   $\w\in \V(S)$,  will be called {\em strict sense processes}.   
 
As further extension, we consider the convex relaxation of $(P_e)$:
$$
(P_r)
\left\{
\parbox[c][2.8cm]{.915\textwidth}{%
\begin{align}
&\qquad\qquad\text{minimize} \,\,\,   \Psi(y(S)) \nonumber
 \\
 &\text{over  
$(\underline{\w},\underline{\alpha},\lambda, y)\in \W^{1+n}(S)\times\A^{1+n}(S)\times  \Lambda_n(S)   \times W^{1,1}([0,S]; \R^n)$, verifying }  \nonumber \\
&\dot y(s)=\sum_{k=0}^n\lambda^k(s)\F(s,y(s),\w^k(s),\alpha^k(s)) \  \text{a.e.,} \quad
y(0)=\xb,\nonumber \\ 
& h(s,y(s))\le0 \quad \forall s\in[0,S], \qquad  
y(S)\in\T, \nonumber
\end{align}}
\right.
$$
where $ \Lambda_n(S) := L^1([0,S];\Delta_n)$ and $\Delta_n$ is the $n$-dimensional simplex:  
\[
\Delta_n:=\left\{\lambda=(\lambda^0,\dots,\lambda^n): \ \ \lambda^k\ge0, \  k=0,\dots,n, \ \  \sum_{k=0}^n\lambda^k=1\right\}.
\]
Problem   $(P_r)$ is briefly referred to as  the  {\em relaxed} problem  and a   process   $(\underline{\w},\underline{\alpha},\lambda, y)$ for $(P_r)$ is  referred to as {\em relaxed} process.   We will identify a   process $(\w,\alpha,y)$  with any relaxed process $(\underline{\w},\underline{\alpha},\lambda, y)$, where $\underline{\w}:=(\w,\dots,\w)$,  $\underline{\alpha}:=(\alpha,\dots,\alpha)$.
 
The controls $a$ and $w$ play a different role, as only the control set $\V(S)$ to which $w$ belongs is extended. This distinction is reflected by the hypotheses on the dynamics $\F$. Referring to Section \ref{S1} for details, we observe that while continuity of $\F$ in $a$ will be enough, with respect to $w$ a   form of uniform continuity will be needed, both of $\F$ and of its Clarke subdifferential $\partial^c_x\F$. Moreover, not only $\overline{V}=W$, but there must also exist an increasing sequence of closed subsets $V_i\subseteq V$ such that $\cup_iV_i=V$ (in Remark \ref{RKas} below we will discuss   possible extensions of this hypothesis).
This formulation of the problem  includes as special cases both the  extension by convex relaxation considered in \cite{PV2}  (if $\F$ does not depend on $w$), and the impulsive, in general non-convex,  extension investigated in \cite{MRV,FM120}. In fact, if the isolated process that we show to be abnormal extremal belongs to a subclass of relaxed processes (for example, it is an extended process),   as a corollary, the  normality guarantees that there is no gap between $\inf\Psi$ over feasible strict sense processes and   on that subclass. 
\vsm
In Theorem \ref{Th1} of Section \ref{S1},  we state our first main result,  that  any isolated feasible relaxed  process  is an abnormal extremal.   The relevance of this result lies, in fact,  in its consequences, which are: (i)  a `normality test'  for no gap, namely,   if for a (local)   minimizer $\bar z$ of $(P_r)$ or $(P_e)$  the cost multiplier is $\ne0$  for any set of multipliers in the  maximum principle, at $\bar z$ there is no (local)  infimum gap; (ii)  the original control   system \eqref{E}  is controllable to any feasible relaxed process which is not an   abnormal  extremal  (see Theorems \ref{Cor_Norm}, \ref{Cor_Contr} below).

When the state constraint is active at the initial point $(0,\xb)$, it is well-known that sets of degenerate multipliers   such that {\em any} feasible relaxed trajectory is abnormal may exist,   making   the above  results (i), (ii) in fact useless.   This `degeneracy question' seems to have been disregarded in the literature on the relationship between gap and normality, apart from \cite{FM120}, where, however,  conditions are introduced which are never met in the case of a fixed initial point.  

Based on the above considerations, in Section \ref{S2} we provide a  condition inspired by the nondegeneracy conditions proposed in  \cite{FeV94,FeFoV99} ({\bf (H4)} below),  under which we refine the results of Section  \ref{S1}.   In particular, we establish that  any feasible relaxed  process which is isolated, is an abnormal extremal for a {\it nondegenerate} maximum principle, and derive as corollaries a  `nondegenerate normality test'  for no gap and a  `nondegenerate controllability condition'  (see Theorems \ref{Th2},  \ref{Cor_Norm2}, \ref{Cor_Contr2} below).

The   `normality' and the  `nondegenerate normality test'  are useful especially because in certain situations they allow  to deduce the absence of   gap from easily verifiable conditions, in the form of  constraint and endpoint  qualification conditions for normality,   on which there is a wide literature (see e.g. \cite{FoFr15,FrTo13,LFodaP11,AK20} and  references therein). As shown in \cite{MRV,MS20,FM120}, where some explicit normality sufficient conditions for the control-affine impulsive extension  are provided, these conditions  are in general weaker than those previously obtained  to get  the absence of   gap directly, as in \cite{AMR15,M18}.    
 
In Section \ref{SFT} we extend the previous results to  free end-time  optimal control problems.  We limit ourselves to considering the case of Lipschitz continuous time dependence, leaving the case of measurable time dependence to future investigations. Actually, Lipschitz continuous time dependence always arises in the impulsive extension of nonlinear problems with unbounded controls under the graph-completion approach, to which we apply our results in Section \ref{S3}. Impulsive optimal control problems  have been extensively studied together with their applications, mostly in the case of  control-affine systems, starting from   \cite{Ris:65,War:65,BR:88,Mi:94,MR:95}. We focus instead on the  less investigated case of control-polynomial dynamics \cite{RS00,MS14}. Among applications for which the polynomial dependence is relevant let us mention Lagrangian mechanical systems, possibly with friction forces, where inputs are identified with the derivatives of some coordinates. In this case, the degree of the polynomial is 2, as a consequence of the fact that the  kinetic energy is a  quadratic form of the velocity (see, e.g. \cite{AB1,BR10}).

\subsection{Notations and preliminaries}\label{sub1.1}
Given an interval $I\subseteq\R$ and a set $X \subseteq \R^k$, we write $W^{1,1}(I;X)$ 
 $L^1(I;X)$, $L^\infty (I;X)$, 
for the space of absolutely continuous functions
 Lebesgue integrable functions, essentially bounded functions 
 defined on $I$ and with values in $X$, respectively. When codomain or domain is clear, we will  use $\| \cdot \|_{L^1(I)}$, $\| \cdot \|_{L^{\infty}(I)}$,  or also  $\| \cdot \|_{L^1}$,  $\| \cdot \|_{L^{\infty}}$  to denote the $L^1$ and the ess-sup norm, respectively. 
  For all the classes of functions introduced so far, we will not specify domain and  codomain when the meaning is clear.   Furthermore,  we denote by $\ell(X)$, ${\rm co}(X)$, Int$(X)$, $\overline{X}$, $\partial X$ the Lebesgue measure, the convex hull, the interior, the closure, and  the boundary of $X$, respectively. As customary,  $\chi_{_X}$ is the characteristic function of $X$, namely $\chi_{_X}(x)=1$ if $x\in X$ and $\chi_{_X}(x)=0$ if $x\in\R^k\setminus X$.
For any subset $Y\subset X$,   $\text{{\rm proj}}_{Y}X$ will denote the  projection of $X$ on $Y$. We denote the closed unit ball in $\R^k$ by $\B_k$, omitting the dimension when it is clear from the context. Given a closed set ${\mathcal O} \subseteq \R^k$  and a point $z \in \R^k$, we define the distance of $z$ from ${\mathcal O}$ as $d_{\mathcal O}(z) := \min_{y \in {\mathcal O}} |z-y|$.
 We set $\R_{\geq 0} := [0,+\infty[$. For any  $a,b \in \R$, we write $a \vee b:= \max \{a,b\}$.
  We use $NBV^{+}([0,  S];\R)$ to denote the space of increasing, real valued functions $\mu$ on $[0, S]$ of bounded variation, vanishing at the point 0 and right continuous on $]0, S[$.   Each  $\mu\in NBV^+([0,S];\R)$  defines a Borel measure on $[0, S]$,   still denoted by $\mu$,  its total variation function is  indicated by  $\| \mu \|_{TV}$ or   by $\mu([0, S])$, and   its support  is spt$\{\mu\}$.  
 
Some standard constructs from nonsmooth analysis are employed in this paper. For background material we refer the reader for instance to \cite{C90,OptV}. A set 
$K \subseteq \R^k$ is a {\em cone} if $\alpha k \in K$ for any $\alpha >0$,  whenever $k \in K$. Take a closed set $D \subseteq \R^k$ and a point $\bar x \in D$, 
the \textit{limiting normal cone} $N_D(\bar x)$ of $D$ at $\bar x$ is given by 
\[ 
N_D(\bar x) := \left\{  \eta\in\R^k \text{ : } \exists x_i \stackrel{D}{\to} \bar x,\, \eta_i \to \eta \,\,\text{ such that }\,\,  \limsup_{x \to x_i} \frac{\eta_i \cdot (x-x_i)}{|x-x_i|}  \leq 0   \ \ \forall i     \right\},
\]
 in which the notation $x_{i} \stackrel{D}{\longrightarrow}\bar{x}$ is used to indicate that  all points in the converging sequence  $(x_i)_{i}$  lay in $D$.  
Take a lower semicontinuous function  $G:\R^k \to \R$  and a point $\bar x \in \R^k$, the \textit{limiting subdifferential} of $G$ at $\bar x$ is 
\[
\partial G(\bar x) := \left\{     \xi  \text{: } \exists   \xi_i \to \xi, \, x_i \to \bar x \text{ s.t. } \limsup_{x \to x_i} \frac{\xi_i \cdot (x-x_i) - G(x) + G(x_i)}{|x-x_i|}  \leq 0   \ \ \forall i  \right\}.
\]
 If  $G:\R^k\times\R^{h} \to \R$ is a lower semicontinuous function and $(\bar x,\bar y) \in \R^k\times\R^{h}$, we  write $\partial_x G(\bar x,\bar y)$, $\partial_y G(\bar x,\bar y)$ to denote the {\em partial limiting subdifferential of $G$ at  $(\bar x,\bar y)$ w.r.t. $x$, $y$}, respectively. When $G$ is differentiable, $\nabla G$ is  the usual gradient operator and $\nabla_x G$, $\nabla_y G$ denote the  partial derivatives of $G$.
 Given a locally Lipschitz continuous function $G:\R^k\to\R$ and $\bar x \in \R^k$, the \textit{reachable hybrid subdifferential} of $G$ at $\bar x$ is
\[
\partial^{*>} G(\bar x) := \, \left\{    \xi    \text{: } \exists (x_i)_i  \subset \text{diff}(G) \setminus \{ \bar x\} \text{ s.t. }  x_i \to \bar x, \, G(x_i) >0 \,\,\forall i, \    \nabla G(x_i) \to \xi       \right\}
\] 
while the \textit{reachable gradient} of $G$ at $\bar x$ is 
\[
\partial^{*} G(\bar x) := \, \left\{    \xi   \text{: } \exists (x_i)_i  \subset \text{diff}(G) \setminus \{ \bar x\} \text{ s.t. }  x_i \to \bar x  \text{ and }   \nabla G(x_i) \to \xi       \right\}
\] 
where diff($G$) denotes the set of differentiability points of $G$.
We define the \textit{hybrid subdifferential} as $\partial^{>} G(\bar x):=${\rm co}$\,\partial^{*>} G(\bar x)$. The set $\partial^*G(\bar x)$ is nonempty, closed, in general non convex, and its convex hull coincides with  the Clarke subdifferential $\partial^{c} G(\bar x)$, that is, $\partial^{c} G(\bar x)=${\rm co}$\,\partial^{*} G(\bar x)$. Finally, when $G$ is locally Lipschitz continuous, $\partial^{c} G(\bar x)=${\rm co}$\,\partial  G(\bar x)$. With a small abuse of notation, given   a locally Lipschitz continuous function $G:\R^k\to\R^l$ and $\bar x \in \R^k$, we still write $\partial^{c} G(\bar x)$ to denote the {\em Clarke generalized Jacobian}, defined as 
\[
\partial^{c} G(\bar x) :=\text{\,co\,} \, \left\{    \xi   \text{: } \exists (x_i)_i  \subset \text{diff}(G) \setminus \{ \bar x\} \text{ s.t. }  x_i \to \bar x  \text{ and }   \nabla G(x_i) \to \xi       \right\},
\] 
where now $\nabla G$ denotes the classical Jacobian matrix of $G$.  
 

\section{Infimum gap, isolated processes  and  abnormality}\label{S1} 

In the following, when the final time $S>0$ is clear from the context, we simply write  $\V$, $\W$,   $\A$, $\Lambda_n$, instead of  $\V(S)$, $\W(S)$, $\A(S)$, $\Lambda_n(S)$,  respectively.

\subsection{Basic assumptions} We  shall consider  the following hypotheses, in which $(\underline{\bar\w},\underline{\bar\alpha}, \bar\lambda,\bar y)$ is  a   feasible relaxed process, 
 which we call the {\em reference process} and, 
 for some $\theta>0$,   we set
$$
\Sigma_{\theta}:=\left\{(s,x)\in\R\times\R^{n}: \ \ s\in[0,S], \ x\in \bar y(s)+\theta\,\B\right\}.
$$ 

{\em  \begin{itemize} 
\item[{\bf (H1)}] 
There exists a sequence $(V_i)_i$ of closed subsets of $V$ such that $V_i\subseteq V_{i+1}$ for every $i$ and  $\bigcup_{i=1}^{+\infty}V_i=V$.

\item[{\bf (H2)}]  The constraint function  $h$ is  upper semicontinuous  and $K_{_h}$-Lipschitz continuous in $x$, uniformly w.r.t. $s$ in $\Sigma_{\theta}$.
\item[{\bf (H3)}] 
{\rm (i)} For all  $(x,w,a)\in \{x\in\R^n: \ (s,x)\in \Sigma_{\theta} \ \text{ for some $s\in[0,S]$}\}\times W\times A$,  $\F(\cdot,x,w,a)$ is Lebesgue measurable on $[0,S]$ and for any $ (s,x)\in \Sigma_{\theta}$,    $ \F(s,x,\cdot,\cdot)$ is continuous on $W\times A$.  Moreover, there exists   $k\in L^1([0,S];\R_{\geq 0})$ such that, for all $(s,x,w,a)$, $(s,x',w,a)\in \Sigma_{\theta}\times W\times A$, we have
\[
|\F(s,x,w,a)|\le k(s), \qquad |\F(s,x',w,a)-\F(s,x,w,a)|\le k(s)|x'-x|.
\]
{\rm (ii)} There exists some continuous   increasing function $\varphi:\R_{\ge0}\to\R_{\ge0}$ with $\varphi(0)=0$ such that for any $(s,x,a)\in \Sigma_{\theta}\times A$,   we have
\[
\begin{array}{l}
|\F(s,x,w',a)- \F(s,x,w,a)| \leq k(s)\varphi(|w'-w|) \qquad\forall w',\,w\in W, \\
\partial_x^c \F(s,x,w',a)\subseteq \partial_x^c \F(s,x,w,a)+k(s)\varphi(|w'-w|)\,\B \qquad\forall w',\,w\in W.
\end{array}
\]
\noindent When hypothesis {\bf (H3)} is valid for $k \equiv K_{_\F}$  for some constant $K_{_\F} >0$, we will refer to  {\bf (H3)} as {\bf (H3)$'$}.
\end{itemize}}

\begin{rem}\label{Rhyp12} {\rm Condition  {\bf (H1)} implies (and  is in general stronger  than) the density of $\V$ in $\W$ in the $L^1$-norm. Indeed, given $\w\in\W$, from  {\bf (H1)} it follows that  for any  $\delta>0$ there exists some index $i_{\delta}$, such that the Hausdorff distance $d_H(V_i,W)<\delta/S$ for every $i\ge i_{\delta}$. Hence,  by the selection theorem  \cite[Theorem 2, p. 91]{AC84}    for such $i$ there is some measurable function $\w_i(s)\in \text{proj}_{V_i}(\w(s))$ for a.e. $s$, which thus verifies $\|\w_i -\w\|_{L^1}\le S\|\w_i -\w\|_{L^\infty}\le Sd_H(V_i,W)<\delta$. 
  In particular, when  $\V= L^1([0,S];V)$ for some  subset $V\subset W$ such that int$(W)\subseteq V\subseteq W$ and $W= \overline{\text{int}(W)}$, the validity of {\bf (H1)} follows  by elementary properties of closed and open  subsets of $\R^n$. }
\end{rem} 
\begin{rem}\label{RKas} {\rm  As one can easily deduce from the proofs in Section \ref{S4} below, condition  {\bf (H1)} could be replaced by the hypothesis that there exists a subset  $\V\subset \W:= L^1([0,S]; W)$ which is  closed by finite concatenation and verifies:  

\noindent (i) {\em there exists an increasing sequence of closed subsets   $(\V_i)_i\subseteq \V$ such that  $\cup_{i} \V_i = \V$ and,   for any $w \in \W$ and  $\delta>0$, there are $i_\delta$ and $w_\delta \in \V_{i_\delta}$,  such that $\| w_\delta - w  \|_{L^1} \leq \delta$;}

\noindent (ii)  {\em for every  $i$, for  the optimization  problem obtained from $(P)$ by replacing $\V$ with  $\V_i$, a nonsmooth constrained maximum principle is valid. } 
 
For example, from \cite{Kas86}  a condition sufficient for (ii) to hold true is  the $C^0$-closure of the set of the solutions to \eqref{E}  as $(\w,\alpha)\in  \V_i\times\A$, for every $i$.
}
\end{rem} 
\begin{rem}\label{Rhyp5} {\rm Condition {\bf (H3)}(ii) is  satisfied, for instance,  when  $\F(s,x,w,a)=\F_1(s,x,a)+\F_2(s,x,w,a)$, where  $\F_1$, $\F_2$  verify hypothesis {\bf (H3)}(i) and, in addition, the function  $\F_{2}(s,\cdot,w,a)$ is $C^1$  and $\nabla_x\F_{2}$  is continuous on the compact set $\Sigma_{\theta}\times W\times A$. }  
\end{rem}

  \subsection{Infimum gap and isolated processes} 
  Let us write 
$\Gamma $, $\Gamma_e$,  $\Gamma_{r}$,
 to denote the sets of strict sense,    extended,   and    relaxed  processes which are feasible,
 respectively. 
 As  observed in the Introduction,   $\Gamma$, $\Gamma_e$, 
 can be identified   
 with   subsets  of $\Gamma_r$.  
 \begin{definition}[Minimizer] \label{mindef}
 A process  $\bar z:=(\underline{\bar\w},\underline{\bar\alpha},\bar\lambda, \bar y)\in\Gamma_g$, $g\in\{r,e\}$,  is  called a {\it local $\Psi$-minimizer} for  problem $(P_g)$  if,  for some $\delta>0$,  one has
 $$
\Psi(\bar y(S))=\inf\left\{\Psi(y(S)): \ \  (\underline{\w},\underline{\alpha},\lambda, y)\in\Gamma_g ,   \ \ \|y-\bar y\|_{L^\infty}<\delta \right\}.
$$
The process $\bar z$ is a $\Psi$-minimizer for  problem $(P_g)$ if $\ds\Psi(\bar y(S))=\inf_{\Gamma_g} \Psi(y(S))$.
\end{definition}
\begin{definition}[Infimum gap] \label{gapdef}
Fix    $\bar z:=(\underline{\bar\w},\underline{\bar\alpha},\bar\lambda, \bar y)\in \Gamma_r$. 

\noindent {\rm (i)} Let $\Psi:\R^n\to\R$ be a continuous function. When   there is some $\delta>0$ such that 
$$
 \ds \Psi(\bar y(S))  <\,  \inf \left\{ \Psi(y(S)): \ \ (\w,\alpha,y)\in\Gamma, \ \ \|y-\bar y\|_{L^\infty}<\delta \right\} 
 $$
(as customary, when the set is empty we set the infimum $=+\infty$), we say that  {\em at $\bar z$ there is a  local $\Psi$-infimum   gap}.  We  say that there is a  {\em $\Psi$-infimum gap} with problem $(P_g)$, $g\in\{r,e\}$,  if 
$
 \ds\inf_{\Gamma_g }\Psi(y(S))   <\,  \inf_{\Gamma}   \Psi(y(S)). 
 $
 
\noindent{\rm (ii)} 
When at $\bar z$ there is a   local $\Psi$-infimum  gap or if   there is a  $\Psi$-infimum   gap (with $(P_e)$ or  $(P_r)$) for some   $\Psi$ as above, we say that  {\em at $\bar z$ there is a   local infimum  gap} or that {\em there is an   infimum   gap}  with $(P_e)$ or  $(P_r)$, respectively.
   \end{definition}      
 Obviously,  a $\Psi$-infimum gap with $(P_e)$ implies  a $\Psi$-infimum gap with $(P_r)$, and   it may happen that  
 $
\ds \inf_{\Gamma_r }\Psi(y(S))   <\, \inf_{\Gamma_e }\Psi(y(S))   <\, \inf_{\Gamma}   \Psi(y(S)).
 $ 

The notion of local infimum gap at $\bar z$   is equivalent to 
the notion of {\em isolated process}, first introduced in \cite{MRV},  which is independent of any optimization problem.
 \begin{definition}[Isolated process  and controllability] \label{isolated}
We say that a process $\bar z:=(\underline{\bar\w},\underline{\bar\alpha},\bar\lambda, \bar y)\in\Gamma_r$ is    {\em isolated} (in $\Gamma$) if,  for some $\delta>0$, one has
$$
\left\{ (\w,\alpha,y)\in  \Gamma:  \ \ \|y-\bar y\|_{L^\infty}<\delta \right\}=\emptyset.
$$
The    control system  \eqref{E}  is  said  {\em controllable to $\bar z$} if   $\bar z$ is not isolated.
\end{definition}
 
\begin{prop} \label{iso}  
Let  $\bar z:=(\underline{\bar\w},\underline{\bar\alpha},\bar\lambda,\bar y)\in\Gamma_r$. Then
\begin{itemize}
\item[{\rm (i)}] if $\bar z$  is isolated, then at $\bar z$ there is a local infimum gap and, for   some $\delta>0$, one has   $\inf \big\{ \Psi(y(S)):$  $(\w,\alpha,y)\in\Gamma , \ \ \|y-\bar y\|_{L^\infty}<\delta \big\}=+\infty$ for every continuous  $\Psi$;
\item[{\rm (ii)}] if at   $\bar z$ there is a local infimum gap, then  $\bar z$ is isolated.
\end{itemize} 
As a consequence,   $\bar z$ is isolated if and only if at $\bar z$ there is a local infimum gap.
\end{prop}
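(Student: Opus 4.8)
The plan is to establish the two implications and then observe that they are logical converses, yielding the stated equivalence. The key is to unpack the definitions carefully: "isolated" (Definition 2.1 on isolated processes) is a purely geometric/topological statement about the emptiness of a $C^0$-neighborhood of $\bar y$ intersected with the feasible strict-sense trajectories, whereas "local infimum gap at $\bar z$" (Definition 2.1 on infimum gap) asserts the existence of \emph{some} continuous cost $\Psi$ for which the strict value strictly exceeds $\Psi(\bar y(S))$. The bridge between these two is the convention that an infimum over the empty set equals $+\infty$.

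For part (i), I would assume $\bar z$ is isolated, so fix the $\delta>0$ provided by Definition 2.1 for which
$$
\left\{(\w,\alpha,y)\in\Gamma:\ \|y-\bar y\|_{L^\infty}<\delta\right\}=\emptyset.
$$
Then for \emph{every} continuous $\Psi:\R^n\to\R$, the infimum
$$
\inf\left\{\Psi(y(S)):\ (\w,\alpha,y)\in\Gamma,\ \|y-\bar y\|_{L^\infty}<\delta\right\}
$$
is taken over the empty set and hence equals $+\infty$ by the stated convention. Since $\Psi(\bar y(S))$ is a finite real number, the strict inequality $\Psi(\bar y(S))<+\infty$ holds, which is exactly the defining inequality for a local $\Psi$-infimum gap at $\bar z$; as this holds for some (indeed every) continuous $\Psi$, by Definition 2.1(ii) there is a local infimum gap at $\bar z$. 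This simultaneously proves both assertions of (i).

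For part (ii), I would argue by contraposition. Suppose $\bar z$ is \emph{not} isolated; then for every $\delta>0$ the set $\{(\w,\alpha,y)\in\Gamma:\ \|y-\bar y\|_{L^\infty}<\delta\}$ is nonempty. Fix an arbitrary continuous $\Psi$ and any $\delta>0$. Picking a feasible strict-sense process $(\w,\alpha,y)$ in this neighborhood with $\|y-\bar y\|_{L^\infty}<\delta$, one has
$$
\inf\left\{\Psi(y(S)):\ (\w,\alpha,y)\in\Gamma,\ \|y-\bar y\|_{L^\infty}<\delta\right\}\le\Psi(y(S)),
$$
and as $\delta\to0^+$, continuity of $\Psi$ forces $\Psi(y(S))\to\Psi(\bar y(S))$ along such selections, so the infimum cannot stay strictly above $\Psi(\bar y(S))$ for all small $\delta$. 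Hence the strict-gap inequality fails for every $\Psi$ and every $\delta$, meaning there is no local infimum gap at $\bar z$; contraposition then yields (ii). The only mildly delicate point—the \emph{main obstacle}—is this last limiting step: I must ensure that from non-isolation one can extract strict-sense trajectories whose \emph{endpoint} values $y(S)$ approach $\bar y(S)$, not merely their $L^\infty$-distance to $\bar y$ shrinking. But $\|y-\bar y\|_{L^\infty}<\delta$ controls $|y(S)-\bar y(S)|<\delta$ directly, so endpoint convergence is immediate and continuity of $\Psi$ closes the argument. Combining (i) and (ii) gives the stated equivalence.
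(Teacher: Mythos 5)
Your proposal is correct and follows essentially the same route as the paper: part (i) is the immediate ``infimum over the empty set is $+\infty$'' observation, and part (ii) is the paper's argument (there stated as a proof by contradiction rather than contraposition) of extracting feasible strict-sense trajectories $y_i$ with $\|y_i-\bar y\|_{L^\infty}\to 0$ and using continuity of $\Psi$ at the endpoint to force the infimum over the $\delta$-neighborhood down to $\Psi(\bar y(S))$.
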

\begin{proof} The proof of (i) is trivial, hence we limit ourselves to prove (ii). Suppose   that at   $\bar z$ there is a $\Psi$-local infimum gap for some continuous $\Psi$ and some $\delta>0$, but  $\bar z$ is not isolated. Then, for every $i\in\N$, $i\ge\frac{1}{\delta}$, there   exists  a feasible strict sense process $z_i= (\w_i,\alpha_i,y_i)\in\Gamma$ such that $\|y_i-\bar y\|_{L^\infty}<\frac{1}{i}$,  and,  by  the continuity of $\Psi$,  
$$
 \Psi(\bar y(S))<\inf \left\{ \Psi(y(S)): \,    (\w,\alpha,y)\in\Gamma,  \,  \|y-\bar y\|_{L^\infty}<\delta \right\}\le \lim_{i }\Psi(y_i(S)), 
 =\Psi(\bar y(S))
$$
which gives the desired contradiction.
\end{proof}
 Incidentally,   if at  $\bar z$   there is a  local $\Psi$-infimum gap for some $\Psi$, for some $\delta>0$ there is in fact a local $\tilde\Psi$-infimum gap and  $\inf \big\{ \tilde\Psi(y(S)): \   (\w,\alpha,y)\in\Gamma,   \ \|y-\bar y\|_{L^\infty}<\delta \big\}=+\infty$  for {\em every} continuous function $\tilde\Psi$. 

 \subsection{Abnormality and infimum gap} 
 We introduce a notion of normal and abnormal extremal for the relaxed optimization problem.
\begin{definition}[Normal and abnormal extremal] \label{Dnormal} 	 
Let  $\bar z:=(\underline{\bar\w},\underline{\bar\alpha}, \bar\lambda,\bar y)$ be a feasible relaxed  process.  Given a   function $\Psi:\R^n\to\R$ which is Lipschitz continuous on a neighborhood of $\bar y(S)$,      we say that $\bar z$  is  a  {\em  $\Psi$-extremal}  if there exist   a path $p \in W^{1,1}([0, S]; \R^n)$, $\gamma \geq 0$,  $\mu \in NBV^{+}([0, S];\R)$, $m: [0,S] \to \R^{n}$ Borel measurable and $\mu$-integrable function, verifying the following conditions:
$$
\parbox[c][5cm]{1.\textwidth}{%
\begin{align}
& \| p \|_{L^{\infty}}+ \| \mu \|_{TV} + \gamma \neq 0; \label{fe1} \\
& -\dot p(s)  \in \sum_{k=0}^n \bar\lambda^k(s)\,{\rm co}\,  \partial_{x}  \left\{q(s)\cdot \F(s, \bar y(s),  \bar \w^k(s),\bar\alpha^k(s))\right\} \  \text{ a.e.;} \label{ad_eq} \\
&  -q(S)  \in \gamma \partial \Psi\left(  \bar y(S)\right)  
 +N_{\T } ( \bar y(S) ); \label{trans_cond} \\
&\text{for every $k=0,\dots,n$,  for a.e. $s\in[0,S]$, one has } \nonumber \\
&q(s)\cdot \F\big(s, \bar y(s), \bar \w^k(s),\bar\alpha^k(s)\big) 
 =  \max_{(w,a)\in W\times A}  q(s)\cdot\F\big(s, \bar y(s), w,a\big);  
 \label{maxham} \\
 &m (s) \in \partial_{ x}^{>}\, h\left(s, \bar y(s)\right)  \qquad \text{$\mu$-a.e.;}
\label{m_eq} \\
& spt(\mu) \subseteq \{ s\in [0,S] \text{ : } h\left(s, \bar y(s)\right) = 0 \}, \label{spt_mu}
\end{align}}
$$
where
\[
 q(s)  := 
\begin{cases}
  p(s) + \int_{[0,s[} m(\sigma) \mu(d\sigma)  \qquad\,\, s\in [0,S[, \\
  p(S) + \int_{[0, S]} m(\sigma) \mu(d\sigma)  \qquad s=S.
\end{cases}
\] 
 We will call a  $\Psi$-extremal  \textit{normal} if all possible choices of   $(p,\gamma,\mu, m)$ as above have $\gamma >0$, and  \textit{abnormal}  when it is not normal. Since  the notion of abnormal $\Psi$-extremal is actually independent of  $\Psi$,  in the following    abnormal $\Psi$-extremals will be simply called {\em abnormal  extremals}.   \end{definition}
  
\begin{theo}\label{Th1} Let $\bar z:=(\underline{\bar\w},\underline{\bar\alpha},\bar\lambda, \bar y)$ be a feasible relaxed process.  Assume that hypotheses {\em {\bf (H1)}--{\bf (H3)}} are verified. If  at $\bar z$ there is a local infimum gap, then $\bar z$ is an abnormal extremal. 
\end{theo}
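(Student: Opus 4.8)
The plan is to follow approach (b): penalization combined with Ekeland's variational principle and the nonsmooth constrained maximum principle, followed by a limiting procedure. Since at $\bar z$ there is a local infimum gap, by Proposition \ref{iso} the process $\bar z$ is isolated, so there is $\delta>0$ such that every feasible strict sense process has trajectory at $L^\infty$-distance $\ge\delta$ from $\bar y$. The idea is that strict sense processes close to $\bar y$ are necessarily \emph{infeasible}; by measuring their infeasibility and minimizing it we will produce, in the limit, a nontrivial set of multipliers in which the genuine cost $\Psi$ plays no role, whence $\gamma=0$ and abnormality.

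First I would introduce the infeasibility functional
$$
J(y):=\max\left\{\Big(\max_{s\in[0,S]}h(s,y(s))\Big)^+,\ d_{\T}(y(S))\right\},
$$
which, by \textbf{(H2)}, is Lipschitz with respect to $\|\cdot\|_{L^\infty}$ and vanishes exactly on feasible trajectories. Exploiting the density furnished by \textbf{(H1)} (see Remark \ref{Rhyp12}) together with a relaxation approximation of the reference relaxed trajectory, I would construct, for each $i$, a relaxed process with controls valued in the compact set $V_i$ whose trajectory $y_i$ satisfies $\|y_i-\bar y\|_{L^\infty}\to0$; keeping the convex, relaxed structure throughout is what will let the adjoint inclusion \eqref{ad_eq} and the maximum condition \eqref{maxham} retain their convex-combination form in the limit. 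Since $\bar z$ is isolated, for large $i$ one has $\eps_i:=J(y_i)>0$ while $\eps_i\to0$; as $J\ge0$, each such process is an $\eps_i$-minimizer of $J$ on the complete metric space $\mathcal M_i$ of $V_i\times A$-valued controls, the control-to-trajectory map being continuous by Gronwall's inequality and \textbf{(H3)}.

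Next I would apply Ekeland's variational principle on $\mathcal M_i$ to obtain a process whose controls are $\sqrt{\eps_i}$-close to the previous ones, hence whose trajectory $\tilde y_i$ still converges uniformly to $\bar y$ and lies within $\delta$, and which \emph{exactly} minimizes $J$ plus a $\sqrt{\eps_i}$-Lipschitz control penalty. To this genuine minimization I would apply the nonsmooth maximum principle over the compact control set $V_i$. Because the running maximum of $h$ enters the cost $J$, the principle yields a measure $\mu_i$ together with a density $m_i\in\partial^{>}_x h$ supported where the maximum of $h$ is attained; since $J(\tilde y_i)>0$ forces that maximum to be \emph{positive}, the active points satisfy $h>0$, which is exactly what produces the reachable hybrid subdifferential in \eqref{m_eq}. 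The endpoint term $d_{\T}$ contributes a transversality condition through a subgradient of $d_{\T}$, and, crucially, no term involving $\Psi$ ever appears, so the cost multiplier is $0$ at every step. The Ekeland penalty perturbs the maximization only by $O(\sqrt{\eps_i})$, giving an approximate form of \eqref{maxham}, and normalizing the multipliers at each $i$ yields the nontriviality \eqref{fe1}.

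Finally I would pass to the limit $i\to\infty$. Using the uniform convergence of the trajectories to $\bar y$, the domination of the adjoint right-hand sides via the bounds $|\F|\le k(s)$ and the boundedness of $\partial^c_x\F$ from \textbf{(H3)}, and $V_i\uparrow V$ with $\overline V=W$, I would extract convergent subsequences of the adjoint arcs $p_i$ (in $C^0$), of the measures $\mu_i$ (weakly-$*$, their total variations bounded by the normalization), and of the densities $m_i$. Outer semicontinuity of the Clarke subdifferential together with the uniform modulus in \textbf{(H3)}(ii) for $w$ lets me pass the differential inclusion to \eqref{ad_eq}; the same uniform continuity upgrades the maximization from $V_i\times A$ to all of $W\times A$, giving \eqref{maxham}; the limiting subgradient of $d_{\T}$ lands in $N_{\T}(\bar y(S))$, yielding \eqref{trans_cond} with $\gamma=0$; and $\eps_i\to0$ forces the support of $\mu$ into $\{h(s,\bar y(s))=0\}$, giving \eqref{spt_mu}, while outer semicontinuity of $\partial^{*>}_x h$ secures \eqref{m_eq}. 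I expect the main obstacle to be precisely this limit passage for the \emph{state-constraint measure}: guaranteeing that the nontriviality \eqref{fe1} survives, i.e.\ that the normalized mass is not entirely lost when both $\|\mu_i\|_{TV}$ and the endpoint multiplier could simultaneously degenerate, and that the limiting density retains the hybrid, reachable structure $m\in\partial^{>}_x h$. This is exactly where the careful normalization and the uniform hypotheses \textbf{(H3)}(ii), imposed on both $\F$ and $\partial^c_x\F$, are indispensable.
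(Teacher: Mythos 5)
Your overall architecture (a constraint-violation functional, Ekeland's principle, the state-constrained maximum principle, then a limit passage) is indeed the paper's approach (b), and you correctly identify why $\gamma=0$: the true cost $\Psi$ never enters the penalized problems. But there is a genuine gap at the very first step, and it propagates. You construct \emph{relaxed} approximating processes with controls valued in $V_i$ and then assert that isolation of $\bar z$ forces $\eps_i:=J(y_i)>0$. Isolation only says that no \emph{feasible strict sense} process has trajectory within $\delta$ of $\bar y$; it says nothing about relaxed processes with $V$-valued controls, which can perfectly well be feasible and uniformly close to $\bar y$ (take $\F$ independent of $w$: then a relaxed process with any $V$-valued $\underline{\w}$ and weight $\bar\lambda$ reproduces $\bar y$ exactly and is feasible, while $\bar z$ may still be isolated --- this is precisely the convex-relaxation gap scenario). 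So $\eps_i>0$ does not follow, and without it the whole mechanism collapses: you no longer get $d_{\T}(y_i(S))>0$ or $\max_s h(s,y_i(s))>0$, hence neither the unit subgradients of $d_{\T}$ feeding $N_{\T}(\bar y(S))$ nor the reachable hybrid subdifferential of $h$. The paper does the opposite of ``keeping the relaxed structure'': it first applies the Relaxation Theorem to replace $\bar\lambda$ by chattering controls in $\Lambda_n^1$ taking values among $(\bar\w^k,\bar\alpha^k,e^k)$, and only then uses \textbf{(H1)} to replace $W$-valued by $V_{\delta_i}$-valued controls, so that the approximants are genuine strict sense processes to which isolation applies.

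Repairing this forces a second issue your proposal does not address. Once the approximants are non-relaxed, the maximum principle at stage $i$ produces an adjoint inclusion along a \emph{single} control $(\w_i,\alpha_i)$, and the limit passage (via the Compactness of Trajectories Theorem) only yields $-\dot p(s)\in \mathrm{co}\bigcup_{k}\mathrm{co}\,\partial_x\{q(s)\cdot\F(s,\bar y(s),\bar\w^k(s),\bar\alpha^k(s))\}$, a convex combination with \emph{unidentified} weights; this is strictly weaker than \eqref{ad_eq}, which requires the weights to be exactly $\bar\lambda^k(s)$. The paper resolves this by augmenting the state with $\xi$, $\dot\xi=\lambda$, passing the joint inclusion for $(-\dot p_i,\dot\xi_i,\dot y_i)$ to the limit, and then using the Caratheodory representation together with $\dot{\bar\xi}=\bar\lambda$ to pin down the weights --- a device absent from your outline. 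Finally, the survival of nontriviality, which you flag as the main obstacle but leave unresolved, is handled by a normalization argument: the transversality and adjoint conditions give $\gamma_i\sigma_i^1=|q_i(S)|\le \|p_i\|_{L^\infty}+K_{_h}\|\mu_i\|_{TV}$ and $\|\mu_i\|_{TV}=\gamma_i\sigma_i^2$, so one may rescale to $\|p_i\|_{L^\infty}+\|\mu_i\|_{TV}=1$ with $\gamma_i$ uniformly bounded, and this normalization passes to the limit.
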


 A first remarkable  immediate consequence of Theorem \ref{Th1}  is the following sufficient condition for the absence of an infimum gap.
\begin{theo}\label{Cor_Norm}  
Suppose that there exists a [local]  $\Psi$-minimizer  $\bar z:=(\underline{\bar\w},\underline{\bar\alpha}, \bar\lambda,\bar y)$  for  $(P_e)$ or  $(P_r)$, for which  hypotheses {\em {\bf (H1)}--{\bf (H3)}} are verified, $\Psi$ is Lipschitz continuous in a neighborhood of $\bar y(S)$,  and  which is a normal $\Psi$-extremal. Then,  [at $\bar z$] there is no [local] $\Psi$-infimum gap with $(P_e)$ or  $(P_r)$, respectively.
\end{theo}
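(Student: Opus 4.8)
The plan is to argue by contraposition, using Theorem~\ref{Th1} as the sole substantive ingredient; everything else is bookkeeping with the definitions of minimizer and of infimum gap. Observe first that the hypotheses place $\bar z$ squarely within the scope of Theorem~\ref{Th1}: it is a feasible relaxed process and {\bf (H1)}--{\bf (H3)} hold. Hence Theorem~\ref{Th1} tells us that \emph{if} at $\bar z$ there were a local infimum gap, then $\bar z$ would be an abnormal extremal, i.e.\ (Definition~\ref{Dnormal}) it would be a $\Psi$-extremal admitting some multiplier tuple $(p,\gamma,\mu,m)$ with $\gamma=0$. This is flatly incompatible with the standing assumption that $\bar z$ is a \emph{normal} $\Psi$-extremal, for which \emph{every} admissible tuple has $\gamma>0$. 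Thus the crux of both cases is the implication: normality of $\bar z$ rules out any local infimum gap at $\bar z$.

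For the bracketed (local) statement I would proceed directly. Suppose, for contradiction, that at $\bar z$ there is a local $\Psi$-infimum gap. By Definition~\ref{gapdef}(ii) this is in particular a local infimum gap at $\bar z$, so Theorem~\ref{Th1} forces $\bar z$ to be abnormal, contradicting normality. Hence no local $\Psi$-infimum gap can occur, which is exactly the asserted conclusion. (Note that the local-minimizer hypothesis is not even needed here; it will be used only to handle the global statement.)

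For the unbracketed (global) statement, fix $g\in\{e,r\}$ so that $\bar z$ is a (global) $\Psi$-minimizer for $(P_g)$, whence $\Psi(\bar y(S))=\ds\inf_{\Gamma_g}\Psi(y(S))$. Since strict sense processes form a subset of $\Gamma_g$, i.e.\ $\Gamma\subseteq\Gamma_g$, one always has $\ds\inf_{\Gamma_g}\Psi(y(S))\le\inf_{\Gamma}\Psi(y(S))$, so proving the absence of a $\Psi$-infimum gap with $(P_g)$ amounts to ruling out the strict inequality. Suppose then that $\ds\inf_{\Gamma_g}\Psi(y(S))<\inf_{\Gamma}\Psi(y(S))$. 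Restricting the infimum to the smaller set of strict sense processes with $\|y-\bar y\|_{L^\infty}<\delta$ can only increase it, so for every $\delta>0$,
\[
\Psi(\bar y(S))=\inf_{\Gamma_g}\Psi(y(S))<\inf_{\Gamma}\Psi(y(S))\le\inf\left\{\Psi(y(S)):(\w,\alpha,y)\in\Gamma,\ \|y-\bar y\|_{L^\infty}<\delta\right\}.
\]
This is precisely a local $\Psi$-infimum gap at $\bar z$ in the sense of Definition~\ref{gapdef}(i), which the previous paragraph has already excluded. The contradiction yields equality of the two infima, i.e.\ no $\Psi$-infimum gap with $(P_g)$.

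I expect no real obstacle, since all the analytic difficulty is absorbed into Theorem~\ref{Th1}. The only points demanding care are formal: matching the logical quantifiers so that the ``abnormal'' conclusion of Theorem~\ref{Th1} (existence of a $\gamma=0$ tuple) genuinely negates ``normal'' (all tuples have $\gamma>0$), and correctly deducing a \emph{local} gap at the specific process $\bar z$ from a \emph{global} gap with $(P_g)$ --- this uses both the minimizer identity $\Psi(\bar y(S))=\inf_{\Gamma_g}\Psi(y(S))$ and the monotonicity of the infimum under restriction to the $L^\infty$-neighborhood of $\bar y$.
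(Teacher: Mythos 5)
Your proof is correct and is exactly the argument the paper intends: Theorem~\ref{Cor_Norm} is stated there as an ``immediate consequence'' of Theorem~\ref{Th1} with no written proof, and your contrapositive unpacking (normality excludes abnormality, hence excludes a local gap; the global gap reduces to a local one via the minimizer identity and monotonicity of the infimum under restriction to the $L^\infty$-neighborhood) supplies precisely the omitted bookkeeping. No issues.
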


\begin{rem}\label{Rdim1}{\rm  By a  well known constrained maximum principle (see \cite[Ch. 9]{OptV}),  local $\Psi$-minimizers of   $(P_r)$  are $\Psi$-extremal  in a stronger form than in Definition \ref{Dnormal}, in which the costate differential inclusion \eqref{ad_eq} is replaced by
\bel{truead_eq} -\dot p(s)  \in \, {\rm co}\,  \partial_{x}\left\{\sum_{k=0}^n \bar\lambda^k(s) \, q(s)\cdot \F(s, \bar y(s),  \bar \w^k(s),\bar\alpha^k(s))\right\} \quad \text{for a.e. $s \in [0,S]$.}
\eeq
The need to consider  \eqref{ad_eq} derives from the perturbation technique used in the proof of Theorem \ref{Th1} (see also \cite{PV1}).
In fact, \eqref{ad_eq} may differ from \eqref{truead_eq} only in case of nonsmooth dynamics. Precisely, if $\F(s,\cdot, \bar\w^k(s),\bar\alpha^k(s))$ is continuously differentiable at $\bar y(s)$, for all $k=0,\dots,n$ and a.e. $s\in[0,S]$,  then both differential inclusions reduce to  the adjoint equation
$$
 -\dot p(s)=  \sum_{k=0}^n \bar\lambda^k(s) \, q(s)\cdot \F_x(s, \bar y(s),  \bar \w^k(s),\bar\alpha^k(s))  \quad \text{for a.e. $s \in [0,S]$.}
$$}
\end{rem}
%
%
%

Thanks to Proposition \ref{iso}, from Theorem \ref{Th1} we can also deduce the following sufficient condition for controllability to the reference trajectory.
\begin{theo}\label{Cor_Contr}  Let $\bar z:=(\underline{\bar\w},\underline{\bar\alpha},\bar\lambda, \bar y)$ be a feasible relaxed process and assume that hypotheses {\rm {\bf (H1)}--{\bf (H3)}} are verified. Then, either
\begin{itemize}
\item[{\rm (i)}] $\bar z$ is not isolated in $\Gamma$, namely, there exists a sequence of feasible processes $(\w_i,\alpha_i,y_i)\in\Gamma$ such that $\|y_i-\bar y\|_{L^\infty}\to 0$ as $i\to+\infty$; or
\item[{\rm (ii)}] $\bar z$ is an abnormal extremal.
  \end{itemize}
\end{theo}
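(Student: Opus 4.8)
The plan is to deduce Theorem \ref{Cor_Contr} directly from Theorem \ref{Th1}, using the characterization of isolated processes provided by Proposition \ref{iso}, so that the whole statement reduces to a simple dichotomy. First I would split into two mutually exclusive and exhaustive cases according to whether or not $\bar z$ is isolated in $\Gamma$, and show that the first case yields alternative (i) while the second yields alternative (ii).

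In the case where $\bar z$ is \emph{not} isolated, I would simply unwind Definition \ref{isolated}. Negating isolatedness means that for \emph{every} $\delta>0$ the set $\{(\w,\alpha,y)\in\Gamma:\ \|y-\bar y\|_{L^\infty}<\delta\}$ is nonempty. Applying this with the shrinking radii $\delta=1/i$ for each $i\in\N$ produces, for every $i$, a feasible strict sense process $(\w_i,\alpha_i,y_i)\in\Gamma$ with $\|y_i-\bar y\|_{L^\infty}<1/i$, whence $\|y_i-\bar y\|_{L^\infty}\to0$ as $i\to+\infty$. This is exactly alternative (i).

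In the complementary case where $\bar z$ \emph{is} isolated, I would invoke Proposition \ref{iso}(i) to conclude that at $\bar z$ there is a local infimum gap. Since hypotheses {\bf (H1)}--{\bf (H3)} are in force by assumption, Theorem \ref{Th1} then applies and gives that $\bar z$ is an abnormal extremal, which is alternative (ii). As the two cases are exhaustive and each delivers one of the two alternatives, the dichotomy is established.

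I do not expect a genuine obstacle here, since all the analytic substance is already carried by Theorem \ref{Th1}, which I am entitled to assume. The only point deserving care is the faithful translation of the negation of ``isolated'' — a nonemptiness statement holding for \emph{every} fixed $\delta$ — into the sequential convergence form demanded in (i); this is precisely what the choice $\delta=1/i$ accomplishes. It is also worth noting that ``controllable to $\bar z$'' is defined in Definition \ref{isolated} as synonymous with ``not isolated,'' so no further argument is needed to match alternative (i) with the controllability claim in the Introduction.
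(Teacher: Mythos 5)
Your proposal is correct and is essentially the paper's own argument: the paper phrases it as a proof by contradiction (assume $\bar z$ is isolated and not abnormal, then Proposition \ref{iso} gives a local infimum gap and Theorem \ref{Th1} forces abnormality), while you present the same two ingredients as a direct dichotomy. The extra care you take in translating ``not isolated'' into the sequential form of (i) via $\delta=1/i$ is a harmless elaboration of what the paper leaves implicit.
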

 \begin{proof} Suppose by contradiction   statements (i),  (ii)   false, namely, let $\bar z$ be an isolated process which is  not an abnormal extremal. Then,  by Proposition \ref{iso} at $\bar z$ there is a local infimum gap   and  $\bar z$ should be an abnormal extremal by  Theorem \ref{Th1}. 
 \end{proof}

The proof of Theorem \ref{Th1} is given in Section \ref{S4}.

\section{Infimum gap and  nondegenerate abnormality}\label{S2} 
 Disregarding the endpoint constraint, when the state constraint is active at the initial time  there always exist  sets of degenerate multipliers, as, for instance, $\gamma=0$,  $\mu=\delta_{\{0\}}$,   $ p(s)= -m(0)\in \partial^>h(0,\xb)$ for all $s\in[0,S]$.\footnote{For any $r\in\R$, $\delta_{\{r\}}$ is the Dirac unit measure concentrated at $r$.}   
With the degenerate multipliers,  the maximum principle is clearly useless, not only to select  minimizers, but also to identify no-gap conditions in the form of a `normality test'. 

\begin{definition}[Nondegenerate normal and abnormal extremal]
 Assume that   $\bar z:=(\underline{\bar\w},\underline{\bar\alpha},\bar\lambda,\bar y)$ is a feasible relaxed  process.  Given a   function $\Psi:\R^n\to\R$ which is Lipschitz continuous on a neighborhood of $\bar y(S)$, let  $\bar z$  be  a $\Psi$-extremal.  We call {\em nondegenerate multiplier} any set of multipliers  $(p, \gamma, \mu,m)$   that meets the conditions   of Definition  \ref{Dnormal}, and also verifies the following strengthened nontriviality condition
\begin{equation} \label{nondeg}
\mu(]0,S]) +\Vert q \Vert_{L^{\infty}} + \gamma \neq 0, 
\end{equation}
where $q$ is   as in Definition  \ref{Dnormal}.
  We call $\bar z$   a {\em nondegenerate normal $\Psi$-extremal}   if all possible choices of nondegenerate multipliers  have $\gamma >0$, and  a  {\em nondegenerate abnormal $\Psi$-extremal}  when  there exists at least one nondegenerate multiplier with $\gamma =0$. Since  nondegenerate  abnormal $\Psi$-extremals do  not depend on  $\Psi$,   in the following  we will call them simply  {\em nondegenerate abnormal  extremals}.  
 \end{definition}
As it is   easy to see,  a nondegenerate abnormal extremal is always an abnormal extremal, and, on the contrary, any normal $\Psi$-extremal is also nondegenerate normal. However, we may have situations where  a nondegenerate normal $\Psi$-extremal, is not a normal $\Psi$-extremal,   as illustrated in Example \ref{exa}  below. 

\vsm
To introduce our  constraint qualification conditions, we define   $\Lambda_n^1\equiv\Lambda_n^1(S)$, as
\bel{lambda1}
\ds\Lambda_n^1 :=L^1([0,S];\Delta^1_n),  \ \Delta^1_n:=\cup_{k=0}^n\, \{e_k\} \ \ ( {e}_0,\dots,{e}_n  \text{ canonical basis of } \R^{1+n}),
\eeq
and extend the relaxed control system  by introducing a new variable, $\xi$. Precisely, for any $(\underline{\w},\underline{\alpha},\lambda) \in  \W^{1+n}\times\A^{1+n}\times \Lambda_n$, we denote by $(\xi,y)[\underline{\w},\underline{\alpha},\lambda]$  the solution  to
 \bel{Er}
\left\{\begin{array}{l}
(\dot\xi,\dot y)(s)=\Big(\lambda(s),\,  \sum_{k=0}^n\lambda^k(s)\F(s,y(s),\w^k(s),\alpha^k(s))\Big) \ \text{a.e.,} \\ 
(\xi,y)(0)=(0,\xb),
\end{array}\right.
\eeq
and with a small abuse of notation,  in the following  we  call $(\underline{\w},\underline{\alpha},\lambda,\xi,y)$ with $(\xi,y):=(\xi,y)[\underline{\w},\underline{\alpha},\lambda]$, a relaxed process.  Observe that,  when $(\underline{\w},\underline{\alpha})=(\w,\dots,\w,\alpha,\dots,\alpha)$ for some $(\w,\alpha)\in\W\times\A$,  the $y$-component of   the solution $(\xi,y)[\underline{\w},\underline{\alpha},\lambda]$ is in fact an extended trajectory, namely, it solves the original problem \eqref{E}. 

%
%

  \vsm

Let  $\bar z:=(\underline{\bar\w},\underline{\bar\alpha},\bar\lambda,\bar\xi,\bar y)$ be a feasible relaxed  process. Define the constraint set
$$
\Omega:=\{(s,x)\in\R^{1+n}:  \ \  h(s,x)\le0\}. 
$$
 We shall consider the following hypotheses. 
 \vsm
{\em  \begin{itemize} 
\item[{\bf (H4)}] If $(0,\xb)\in \partial \Omega$,  there exist some   $\tilde\delta>0$,  $\bar s\in]0,S]$,  sequences  $(\tilde \w_i,\tilde\alpha_i,  {\tilde\lambda}_i)_i\subset (\W\cap\V(\bar s))\times\A\times \Lambda_n^1$,   $(\hat \w_i,\hat\alpha_i)_i\subset \V(\bar s)\times\A(\bar s)$,  and $(\tilde r_i)_i\subset L^1([0,S];\R_{\ge0})$ with $\lim_{i\to+\infty}\|\tilde r_i\|_{L^1}=0$,  such that the following properties {\rm (i)}--{\rm (iv)}, where $(\tilde \xi_i,\tilde y_i):=(\xi,y)[\tilde \w_i,\dots,\tilde\w_i,\tilde\alpha_i,\dots,\tilde\alpha_i, {\tilde\lambda}_i]$, are verified.
\begin{itemize}
\item[{\rm (i)}] One has
\bel{convtilde}
\lim_{i\to+\infty}\|(\tilde \xi_i,\tilde y_i) -(\bar \xi,\bar y)\|_{L^\infty}=0; 
\eeq
\item[{\rm (ii)}] for every $i$, one has
\bel{htilde}
  h(s,\tilde y_i(s))\le 0 \qquad \forall s\in[0,\bar s]; 
 \eeq
\item[{\rm (iii)}]  for every $i$, there is a Lebesgue measurable subset $\tilde E_i\subset[0,S]$ such that  
 \bel{tildew}
 \begin{array}{c}
 (\tilde \w_i,\tilde\alpha_i,  \tilde\lambda_i)(s)\in \bigcup_{k=0}^n\{(\bar\w^k(s),\bar\alpha^k(s),e^k)\}+(\tilde r_i(s),0,0)\B,  \ \text{a.e. $s\in \tilde E_i$;} \\[1.5ex]
  \lim_{i\to+\infty}\ell(\tilde E_i)=S;
  \end{array}
 \eeq
\item[{\rm (iv)}] for every $i$ and for all $(\zeta_0, \zeta)\in\partial^* h(0, \xb)$, for a.e.  $s\in[0,\bar s]$  one has
  \bel{cqd}
\begin{array}{l}
\ds \zeta\cdot  \big[\F(s,\xb,(\hat \w_i, \hat\alpha_i)(s))-\F(s,\xb, (\tilde \w_i, \tilde\alpha_i)(s))\big]\leq-\tilde\delta.
\end{array}
\eeq
\end{itemize}
\end{itemize}}
\vsm

\begin{rem}\label{RH4}{\rm  Some comments on  hypothesis {\bf (H4)} are in order. 
\begin{itemize}
\item[(1)] It   prescribes additional conditions to   assumptions {\bf (H1)}--{\bf (H3)} only when the initial point $(0,\xb)$ lies on the boundary of the constraint set $\Omega$. Incidentally, this is not equivalent to having $h(0,\xb)=0$, as it may clearly happen that $h(0,\xb)=0$ but  $(0,\xb)\in$Int$(\Omega)$. 
\item[(2)] When $(0,\xb)\in \partial \Omega$, the first part of hypothesis {\bf (H4)} substantially requires the existence of  strict sense  processes that approximate the reference process and satisfy the state constraint on some (small) interval $[0,\bar s]$, with  controls which are close 
to   controls $(\bar\w_i,\bar\alpha_i,\bar\lambda_i)$ belonging to $ \bigcup_{k=0}^n\{(\bar\w^k(s),\bar\alpha^k(s),e^k)\}$ for a.e. $s\in[0,S]$. Let us point out that, disregarding  the state constraint \eqref{htilde},  the existence of approximating controls that satisfy the remaining conditions \eqref{convtilde}, \eqref{tildew} follows  by  the relaxation Theorem together with hypothesis {\bf (H1)}, as we will see in the proof of Theorem \ref{Th1} below, in Subsection \ref{SubTh1}. Relation  \eqref{cqd}, on the other hand, is an adaptation of   known constraint qualification conditions  (see e.g. \cite{FeV94,FeFoV99}), in which the reference (relaxed) control is replaced by  approximating strict sense controls  $(\tilde \w_i, \tilde\alpha_i)\in \V(\bar s)\times\A(\bar s)$. 

\item[(3)] If   hypotheses {\bf (H1)}, {\bf (H2)}, and {\bf (H3)$'$} with reference to $\bar z$ are verified, then in hypothesis {\bf (H4)} one can assume that  the control sequence  $(\hat \w_i,\hat\alpha_i)_i$  belongs to the extended control set $\W(\bar s)\times\A(\bar s)$ rather than  $\V(\bar s)\times\A(\bar s)$. Indeed, using the notation of  {\bf (H1)}--{\bf (H3)$'$}, let us choose some  $\rho>0$ such that   $K_h\,K_\F\varphi(\rho)\le \frac{\tilde\delta}{2}$,  and let $j\in\N$ verify $d_H(V_j,W)\le\rho$.  Hence, for every $i\in\N$  there exists a measurable selection $\hat \w_i^*(s)\in\text{proj}_{V_j}(\hat \w_i(s))$ for a.e. $s\in[0,S]$, such that $\|\hat \w_i^*-\hat \w_i\|_{L^\infty}\le\rho$  (see also Remark \ref{Rhyp12}), and,  for all $(\zeta_0, \zeta)\in\partial^* h(0, \xb)$   (by adding and subtracting  $\zeta\cdot \F(s,\xb,(\hat \w_i^*, \hat\alpha_i)(s))$), one has
$$
\begin{array}{l}
 \zeta\cdot  \big[\F(s,\xb,(\hat \w_i^*, \hat\alpha_i)(s))-\F(s,\xb, (\tilde \w_i, \tilde\alpha_i)(s))\big]\leq
 -\frac{\tilde\delta}{2}, \quad  \text{a.e.  $s\in[0,\bar s]$},
\end{array}
$$
as soon as  $(\hat \w_i,\hat\alpha_i)$ satisfies \eqref{cqd}.

\item[(4)] When hypothesis {\bf (H3)$'$} is verified, then the upper semicontinuity of the set valued map $\partial^* h(\cdot, \cdot)$ and \eqref{cqd} in  {\bf (H4)} imply that there exist  $\delta$, $\varepsilon >0$ such that for any $(\zeta_0, \zeta)\in \partial^* h(\sigma,x)$ with $\sigma \in [0,\varepsilon]$ and $x \in \{\xb\} + \varepsilon \B$, for any $s\leq \bar s$, for any continuous path $y:[0,s] \to \{\xb\}+\varepsilon\B$ and for any measurable map $\eta:[0,s] \to \{0,1\}$, the following integral condition holds:
\bel{cqd2}
\int_{0}^s \eta(\sigma) \,\zeta\cdot \big[\F(\sigma,y,\hat \w_i, \hat\alpha_i)(\sigma)-\F(\sigma,y, \tilde \w_i, \tilde\alpha_i)(\sigma)\big]d\sigma \leq - \delta \, \ell(s,\eta(\cdot)),
\eeq
where
\bel{ell}
\ell(s,\eta(\cdot)):= \ell(\{\sigma\in[0,s] \,:\, \eta(\sigma) =1\}).
\eeq
In particular,  relation \eqref{cqd2} holds  for any $(\zeta_0,\zeta)\in \partial^c h(\sigma,x)$, as the scalar product is bilinear, and   for all $\zeta\in \partial_x^> h(\sigma,x)$, since (see e.g. \cite[Th. 5.3.1]{OptV}):
$$
\partial_x^> h(\sigma,x) \subseteq \partial_x^c h(\sigma,x) \subseteq \{ \zeta \text{ : } \exists\zeta_0 \text{ s.t. } (\zeta_0,\zeta)\in \partial^c h(\sigma,x)\} \qquad \forall (\sigma,x)\in \R^{1+n}.
$$
Relation \eqref{cqd2} is in fact the condition  used  in the proof of Theorem \ref{Th2} below.

\item[(5)] When hypothesis {\bf (H3)$'$} is verified, it is not difficult to verify that condition \eqref{cqd2} still holds if we  replace {\bf (H4)}, (iv)   with the following assumption:
{\it\begin{itemize}
\item[{\rm (iv)$'$}] there exists $\tilde\varepsilon>0$ such that, for every $i$, for all $\zeta\in\partial_x^c h(\sigma,x)$ with $\sigma\in [0,\tilde\varepsilon]$ and $x \in \{\xb\} + \tilde\varepsilon \B$,   for a.e.  $s\in[0,\bar s]$ one has
  \bel{cqd3}
\begin{array}{l}
\ds \zeta\cdot  \big[\F(s,\xb,(\hat \w_i, \hat\alpha_i)(s))-\F(s,\xb, (\tilde \w_i, \tilde\alpha_i)(s))\big]\leq-\tilde\delta.
\end{array}
\eeq
\end{itemize}}
\end{itemize}}
\end{rem}

Theorem \ref{Th1}  can be refined as follows:  

\begin{theo}\label{Th2}  Let $\bar z:=(\underline{\bar\w},\underline{\bar\alpha},\bar\lambda,\bar\xi,\bar y)$ be a feasible relaxed process.  Assume that hypotheses  {\em {\bf (H1)}-{\bf (H2)}-{\bf (H3)$'$}-{\bf (H4)}} are verified. If   at $\bar z$ there is a local infimum gap, then $\bar z$ is a nondegenerate abnormal extremal. 
  \end{theo}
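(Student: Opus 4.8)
The plan is to reprise the perturbation--penalization scheme that proves Theorem~\ref{Th1}, inserting hypothesis \textbf{(H4)} precisely where the behaviour of the measure $\mu$ at the initial time must be controlled. I would first dispose of the case $(0,\xb)\notin\partial\Omega$, in which \textbf{(H4)} is vacuous: if $h(0,\xb)<0$ then $h(s,\bar y(s))<0$ on an initial interval by the upper semicontinuity of $h$, while if $h\le0$ on a whole neighbourhood of $(0,\xb)$ then $\partial_x^{*>} h(0,\xb)=\emptyset$; in either case \eqref{spt_mu} together with \eqref{m_eq} force $\mu(\{0\})=0$. Since $\mu(\{0\})=0$ entails $\mu(]0,S])=\|\mu\|_{TV}$, one checks directly that the weak nontriviality \eqref{fe1} delivered by Theorem~\ref{Th1} already yields \eqref{nondeg}. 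It therefore remains to treat the genuinely degenerate regime $(0,\xb)\in\partial\Omega$, which is exactly the one addressed by \textbf{(H4)}.

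In that regime I would re-run the construction of Theorem~\ref{Th1} starting from the strict-sense approximations $(\tilde\w_i,\tilde\alpha_i,\tilde\lambda_i)$ provided by \textbf{(H4)}: by \eqref{convtilde} and \eqref{tildew} their trajectories $\tilde y_i$ converge to $\bar y$ with controls close to the extreme selections of $\bar z$, exactly like the approximations built in the proof of Theorem~\ref{Th1} (cf. Remark~\ref{RH4}(2)), but now with the additional feature \eqref{htilde} that the state constraint is satisfied on the whole initial interval $[0,\bar s]$. I would then form the penalized cost on the problem with controls restricted to $V_i$ (whose infimum is bounded away from zero because $\bar z$ is isolated), apply Ekeland's variational principle to obtain near-minimizers, and invoke the nonsmooth constrained maximum principle valid on each such problem (see \cite[Ch.~9]{OptV}, and Remark~\ref{RKas}) to extract multipliers $(p_i,\gamma_i,\mu_i,m_i)$ with $\gamma_i\to0$, as in Theorem~\ref{Th1}.

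The new ingredient is to keep the initial concentration of $\mu_i$ under control, using the inward-pointing controls $(\hat\w_i,\hat\alpha_i)$ as a comparison field on an initial subinterval. Since $m_i\in\partial_x^> h$ holds along the approximate trajectory and the latter stays near $\xb$ at small times, the integral qualification \eqref{cqd2}---derived from \textbf{(H4)}(iv) in Remark~\ref{RH4}(4)---shows that switching the reference control to $(\hat\w_i,\hat\alpha_i)$ on a set $\{\eta=1\}$ of positive measure pushes the constraint strictly inward, by an amount bounded below by $\delta\,\ell(s,\eta(\cdot))$. Confronting this strict gain with the near-optimality inequality of the Ekeland point and translating it through the adjoint equation \eqref{ad_eq} and the Weierstrass condition \eqref{maxham}, I expect to obtain that the initial atom $\mu_i(\{0\})$ (and, more generally, the mass of $\mu_i$ on a shrinking initial interval) vanishes as $i\to+\infty$. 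Passing to the limit---weak-$*$ for $\mu_i$, weak-$L^1$ for $\dot p_i$, uniform for $q_i$---then produces limiting multipliers $(p,0,\mu,m)$ with $\mu(\{0\})=0$, and, as in the first reduction, the nontriviality \eqref{fe1} upgrades to the strengthened \eqref{nondeg}: thus $\bar z$ is a nondegenerate abnormal extremal.

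I expect the main obstacle to be exactly this quantitative limit passage. The inward estimate coming from \eqref{cqd2} must hold \emph{uniformly} along the moving approximations $\tilde y_i\to\bar y$ and remain stable under the possible weak-$*$ concentration of $\mu_i$ at $s=0$; guaranteeing that the constant $\delta>0$ is not lost in either limit is delicate. This is where the uniform, constant-$K_{_\F}$ hypothesis \textbf{(H3)$'$} and the \emph{integral} (rather than pointwise) formulation \eqref{cqd2} of the constraint qualification are indispensable, as they allow one to apply the inward estimate simultaneously along all the approximating trajectories and to integrate it against the penalty measures without degrading $\delta$.
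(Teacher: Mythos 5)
Your architecture matches the paper's: you correctly reduce to the case $(0,\xb)\in\partial\Omega$ (a reduction the paper leaves implicit, and your argument for it is sound), you start the Ekeland scheme from the \textbf{(H4)}-approximations $(\tilde\w_i,\tilde\alpha_i,\tilde\lambda_i)$, and you identify $(\hat\w_i,\hat\alpha_i)$ as an inward-pointing comparison field near $s=0$. The genuine gap is at the decisive step, which you yourself flag as ``the main obstacle'' without resolving it, and the resolution you sketch would not work: you propose to show $\mu_i(\{0\})\to 0$ and deduce $\mu(\{0\})=0$ for the weak-$*$ limit. Weak-$*$ convergence does not prevent mass sitting on, say, $[\rho_i,2\rho_i]$ from concentrating into an atom at $0$; indeed in the paper's construction each $\mu_i$ has \emph{no} mass at all on $[0,\rho_i[$, and yet the limit may still charge $\{0\}$. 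Accordingly, the paper never proves $\mu(\{0\})=0$ (which can fail); it proves only the strictly weaker condition \eqref{nondeg}, and it does so by contradiction rather than by a direct limit of atom masses.

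The missing mechanism is the following. On a shrinking window $[0,\rho_i]$, with $\rho_i\downarrow 0$ calibrated to the constraint violation of nearby strict-sense processes, the auxiliary dynamics is replaced by the binary switching system $\F(s,y,\tilde\w_i,\tilde\alpha_i)+\eta(s)\,[\F(s,y,\hat\w_i,\hat\alpha_i)-\F(s,y,\tilde\w_i,\tilde\alpha_i)]$ with $\eta\in\{0,1\}$, and the Ekeland distance estimate forces the minimizer to satisfy $\ell(\{\eta_i=1\})\le\rho_i^2\ll\rho_i$. This buys two things. First (Lemma \ref{l2}), combining \eqref{htilde} with the integral qualification \eqref{cqd2} shows that the Ekeland minimizer satisfies the state constraint on all of $[0,\rho_i]$, hence $spt(\mu_i)\subseteq[\rho_i,S]$ --- no statement about $\mu_i(\{0\})$ is needed. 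Second, in the limit one \emph{assumes} $\Vert q\Vert_{L^\infty}+\mu(]0,S])=0$; this forces $\mu=\mu(\{0\})\,\delta_{\{0\}}$ with $\mu(\{0\})>0$ and $p\equiv-\mu(\{0\})\,\zeta$ for some $\zeta\in\partial_x^>h(0,\xb)$. The Weierstrass condition for the $\eta$-control then says that $\eta_i$ nearly maximizes $\int_0^{\rho_i}\eta\,p_i\cdot[\F(s,y_i,\hat\w_i,\hat\alpha_i)-\F(s,y_i,\tilde\w_i,\tilde\alpha_i)]\,ds$, whereas \eqref{cqd2} makes this integrand bounded below by $\mu(\{0\})\,\delta>0$ where $\eta=1$; switching $\eta$ to $1$ on $\{\eta_i=0\}$, whose measure is at least $\rho_i-\rho_i^2$, therefore gains at least $\mu(\{0\})\,\delta\,(\rho_i-\rho_i^2)$, which dominates all error terms (of order $\rho_i\Vert p_i-p\Vert_{L^\infty}+\rho_i^2+\rho_i^3$) and yields the contradiction. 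It is the ratio $\rho_i^2/\rho_i\to0$ between the Ekeland perturbation budget and the length of the switching window that supplies exactly the uniformity you were worried about; without this device your limit passage does not close.
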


As in the previous section, from Theorem  \ref{Th2} one can derive the following results. 
\begin{theo}\label{Cor_Norm2}  
Suppose that there exists a  [local] $\Psi$-minimizer  $\bar z:=(\underline{\bar\w},\underline{\bar\alpha}, \bar\lambda,\bar\xi,\bar y)$  for  $(P_e)$ or  $(P_r)$, for which    {\em {\bf (H1)}-{\bf (H2)}-{\bf (H3)$'$}-{\bf (H4)}} are verified and $\Psi$ is Lipschitz continuous in a neighborhood of $\bar y(S)$. If  $\bar z$ is a nondegenerate normal $\Psi$-extremal, then, [at $\bar z$] there is no [local]  infimum gap with $(P_e)$ or  $(P_r)$, respectively.
\end{theo}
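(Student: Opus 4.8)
The plan is to argue by contradiction and to reduce the statement to Theorem \ref{Th2}, exactly in the spirit of the proof of Theorem \ref{Cor_Contr}. I would suppose the conclusion fails, so that there is a [local] $\Psi$-infimum gap with $(P_e)$ or $(P_r)$, and then convert this into the single hypothesis that Theorem \ref{Th2} consumes, namely the statement that ``at $\bar z$ there is a local infimum gap''. In the local case this conversion is immediate, since by Definition \ref{gapdef}(ii) a local $\Psi$-infimum gap at $\bar z$ is, by definition, a local infimum gap at $\bar z$.

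In the global case I would exploit the minimizer property together with the inclusion $\Gamma \subseteq \Gamma_g$, $g\in\{e,r\}$. If $\bar z$ is a $\Psi$-minimizer for $(P_g)$ and $\inf_{\Gamma_g}\Psi(y(S)) < \inf_{\Gamma}\Psi(y(S))$, then $\Psi(\bar y(S)) = \inf_{\Gamma_g}\Psi(y(S)) < \inf_{\Gamma}\Psi(y(S))$. Since the infimum of $\Psi(y(S))$ over the subset $\{(\w,\alpha,y)\in\Gamma : \|y-\bar y\|_{L^\infty} < \delta\}$ can only exceed $\inf_{\Gamma}\Psi(y(S))$, for every $\delta > 0$ one obtains
\[
\Psi(\bar y(S)) < \inf\{\Psi(y(S)) : (\w,\alpha,y)\in\Gamma, \ \|y-\bar y\|_{L^\infty} < \delta\},
\]
that is, a local $\Psi$-infimum gap at $\bar z$, and hence again a local infimum gap at $\bar z$.

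Once this reduction is in place, with {\bf (H1)}-{\bf (H2)}-{\bf (H3)$'$}-{\bf (H4)} in force I would invoke Theorem \ref{Th2} to conclude that $\bar z$ is a nondegenerate abnormal extremal; by definition this means that $\bar z$ admits a nondegenerate multiplier $(p,\gamma,\mu,m)$ with $\gamma = 0$. This contradicts the standing hypothesis that $\bar z$ is a nondegenerate normal $\Psi$-extremal, for which every nondegenerate multiplier must have $\gamma > 0$. The contradiction closes the argument and proves the absence of a [local] infimum gap with $(P_e)$ or $(P_r)$, respectively.

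The deduction is essentially a one-line application of Theorem \ref{Th2}, so the only point requiring care—and thus the ``main obstacle''—is the bookkeeping in the global case: I must make sure that a genuine $\Psi$-infimum gap with $(P_g)$ really forces a local $\Psi$-infimum gap at $\bar z$ for some (indeed every) $\delta > 0$. This includes the degenerate possibility that the local strict-sense set $\{(\w,\alpha,y)\in\Gamma : \|y-\bar y\|_{L^\infty} < \delta\}$ is empty, in which case its infimum is $+\infty$ by convention and the gap trivially persists. Beyond this verification, no analytic input other than Theorem \ref{Th2} is needed.
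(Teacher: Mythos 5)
Your argument is correct and is exactly the deduction the paper intends: Theorem \ref{Cor_Norm2} is stated without an explicit proof as an immediate consequence of Theorem \ref{Th2}, obtained by precisely your contradiction scheme (reduce the [local] $\Psi$-gap to a local infimum gap at $\bar z$ via the minimizer property and Definition \ref{gapdef}, invoke Theorem \ref{Th2} to get nondegenerate abnormality, and contradict nondegenerate normality), mirroring the proof of Theorem \ref{Cor_Contr}. The only caveat, which concerns the statement rather than your proof, is that in the global case the argument rules out a $\Psi$-infimum gap for the given cost $\Psi$ rather than an infimum gap for arbitrary continuous costs; this is the reading consistent with the parallel Theorem \ref{Cor_Norm}, while in the local case Proposition \ref{iso} makes the two formulations equivalent.
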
 
 
\begin{theo}\label{Cor_Contr2}  Let $\bar z:=(\underline{\bar\w},\underline{\bar\alpha},\bar\lambda, \bar\xi,\bar y)$ be a feasible relaxed process and assume that  hypotheses   {\em {\bf (H1)}-{\bf (H2)}-{\bf (H3)$'$}-{\bf (H4)}} are verified. Then, either
\begin{itemize}
\item[{\rm (i)}] $\bar z$ is not isolated in $\Gamma$, namely, there exists a sequence of feasible processes $(\w_i,\alpha_i,y_i)\in\Gamma$ such that $\|y_i-\bar y\|_{L^\infty}\to 0$ as $i\to+\infty$; or
\item[{\rm (ii)}] $\bar z$ is a nondegenerate abnormal extremal.
  \end{itemize}
\end{theo}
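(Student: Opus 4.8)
The plan is to retrace the proof of Theorem~\ref{Th1} from Section~\ref{S4}---penalization of the cost, Ekeland's variational principle over strict sense controls, the nonsmooth constrained maximum principle of \cite{C90}, and a limiting passage---and to reinforce it so that the multipliers produced in the limit obey the strengthened nontriviality \eqref{nondeg} rather than merely \eqref{fe1}. Since {\bf (H3)$'$} implies {\bf (H3)}, the argument of Theorem~\ref{Th1} already returns multipliers $(p,\gamma,\mu,m)$ with $\gamma=0$ satisfying \eqref{ad_eq}--\eqref{spt_mu}; the entire additional content is the upgrade from \eqref{fe1} to \eqref{nondeg}, and this is exactly where hypothesis {\bf (H4)} is used.

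First I would settle the case $(0,\xb)\notin\partial\Omega$, in which {\bf (H4)} is vacuous. Feasibility gives $(0,\xb)\in\Omega$, hence $(0,\xb)\in\mathrm{Int}(\Omega)$, so no points arbitrarily close to $(0,\xb)$ satisfy $h>0$ and the reachable hybrid subdifferential $\partial_x^> h(0,\xb)$ is empty; by \eqref{m_eq} this forces $\mu(\{0\})=0$, whence $\mu(]0,S])=\|\mu\|_{TV}$. If $\mu(]0,S])>0$ then \eqref{nondeg} holds, while if $\|\mu\|_{TV}=0$ then $\mu\equiv0$, so $q=p$ and $\|q\|_{L^\infty}=\|p\|_{L^\infty}\neq0$ by \eqref{fe1}; either way \eqref{nondeg} holds. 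The difficulty is therefore confined to $(0,\xb)\in\partial\Omega$, where $\partial_x^> h(0,\xb)$ may be nonempty and the degenerate configuration $\gamma=0$, $\mu=c\,\delta_{\{0\}}$, $p\equiv-c\,m(0)$ with $m(0)\in\partial_x^> h(0,\xb)$ satisfies \eqref{fe1} but gives $\mu(]0,S])+\|q\|_{L^\infty}=0$, and must be ruled out.

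In this case I would run the penalization of Theorem~\ref{Th1} using the feasible approximating controls $(\tilde\w_i,\tilde\alpha_i,\tilde\lambda_i)$ supplied by {\bf (H4)} as the base family---these are strict sense on $[0,\bar s]$, respect the state constraint there by \eqref{htilde}, and converge to $\bar z$ by \eqref{convtilde}---and test the resulting Ekeland near-minimizers against the one-sided variations obtained by switching to the inward pointing control $(\hat\w_i,\hat\alpha_i)\in\V(\bar s)\times\A(\bar s)$ on measurable subsets $E$ of a short initial interval $[0,\tau]\subseteq[0,\bar s]$. By the integrated constraint qualification \eqref{cqd2} recorded in Remark~\ref{RH4}(4), such a switch lowers $h$ along the trajectory by at least $\delta\,\ell(E)$ to first order while keeping it $C^0$-close to $\bar y$. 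Hence, were the limiting multiplier to degenerate---$\gamma=0$, $\mu$ concentrated at $s=0$, and $q\equiv0$---the first variation of the penalized functional along this perturbation would be strictly negative and uniform in $i$, contradicting the near-minimality delivered by Ekeland's principle. This excludes the degenerate configuration and forces the limiting multiplier to satisfy \eqref{nondeg}, while the local infimum gap keeps $\gamma=0$ exactly as in Theorem~\ref{Th1}.

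The main obstacle is precisely this last mechanism: showing that a genuinely one-sided, initial-interval variation yields a first-order decrease that survives the limit and rules out $\mu(]0,S])+\|q\|_{L^\infty}=0$ when $\gamma=0$. Two technical points require care. First, Ekeland's principle only places the near-minimizers $L^1$-close to the base controls $\tilde\w_i$, so the perturbation estimate must be stable under small $L^1$-changes of the control and valid along the whole nearby trajectory rather than at the frozen point $(0,\xb)$; this is exactly why the relevant condition is posed in the integrated, uniform form \eqref{cqd2}, whose derivation from {\bf (H4)}(iv) rests on the constant-$k$ hypothesis {\bf (H3)$'$} and the upper semicontinuity of $\partial^* h$ (Remark~\ref{RH4}(3)--(4)). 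Second, one must confirm that introducing this perturbation into the penalized problem does not revive the cost multiplier, i.e.\ that $\gamma$ still degenerates in the limit because, by the local infimum gap, no strict sense feasible trajectory approaches $\bar y$. Once these are in place, the extraction of weak-$*$ limits of the measures, the passage to the limit in \eqref{ad_eq}--\eqref{spt_mu}, and the final normalization proceed as in the proof of Theorem~\ref{Th1}.
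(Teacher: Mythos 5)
Your argument is essentially the paper's: the paper disposes of Theorem \ref{Cor_Contr2} in one line by combining Proposition \ref{iso} (isolated $\Leftrightarrow$ local infimum gap) with Theorem \ref{Th2}, and the machinery you describe --- the Ekeland penalization seeded with the {\bf (H4)} approximating controls, the switching variation $\eta$ on a short initial interval, and the contradiction obtained from \eqref{cqd2} when $\gamma=0$ and $\mu(]0,S])+\|q\|_{L^{\infty}}=0$ --- is precisely the paper's proof of Theorem \ref{Th2} in Section \ref{S4}. Your preliminary dispatch of the case $(0,\xb)\notin\partial\Omega$ via emptiness of $\partial^>_x h(0,\xb)$ is likewise consistent with the paper's final contradiction step, which produces a $\zeta\in\partial_x^> h(0,\xb)$.
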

The proof of Theorem \ref{Th2} is given in Section \ref{S4}.

\begin{rem} {\rm 
As it will be clear from the proofs in Section \ref{S4}, Theorems \ref{Th2}, \ref{Cor_Norm2}, \ref{Cor_Contr2} --as well as Theorems \ref{Th1}, \ref{Cor_Norm}, \ref{Cor_Contr}-- remain true if we replace the fixed set of control values $A$ with a compact Borel measurable multifunction $A:[0,S] \rightsquigarrow \R^q$.
}
\end{rem}

\section{Free end-time problems with Lipschitz time dependence}\label{SFT}
%
We  consider the optimization problem  
$$
(P^*)
\left\{
\parbox[c][2cm]{.9\textwidth}{%
\begin{align}
&\qquad\qquad\text{minimize} \,\,\,   \Psi(S,y(S)) \nonumber
 \\
 &\text{over the set of  $S>0$ and 
$(\w,\alpha,y)\in \V(S)\times\A(S) \times W^{1,1}([0,S];\R^n)$}, \nonumber \\
&\text{verifying the Cauchy problem \eqref{E} and the constraints} \nonumber \\
&h(s,y(s))\le0 \quad \forall s\in[0,S], \qquad 
(S,y(S))\in \T^*, \nonumber
\end{align}}
\right.
$$
where $\T^*$ is a closed subset of $\R^{1+n}$ and $\Psi:\R^{1+n}\to\R$. An    {\em extended process} or {\em process} is an element   $(S,\w,\alpha,y)$,  where $S>0$,  $(\w,\alpha)\in \W(S)\times\A(S)$,  and $y$ solves  the Cauchy problem \eqref{E}.
When  $\w\in\V(S)$, the process is called a {\em strict sense process}.  A process  $(S,\w,\alpha,y)$ is {\em feasible} if $h(s,y(s))\le0$   for all  $s\in[0,S]$ and 
$(S,y(S))\in\T^*$.  We  call {\em extended problem}, and write $(P^*_e)$, the  problem of minimizing $\Psi(S,y(S))$ over the set of feasible extended processes. 

The associated  {\em relaxed problem}  is
{\small
$$
(P^*_r)
\left\{
\parbox[c][2.7cm]{.9\textwidth}{%
\begin{align}
&\qquad\qquad\text{minimize} \,\,\,   \Psi(S,y(S)) \nonumber \\
&\text{over $S>0$, 
$(\underline{\w},\underline{\alpha},\lambda, y)\in \W^{1+n}(S)\times\A^{1+n}(S)\times  \Lambda_n(S)   \times W^{1,1}([0,S]; \R^n)$ s.t.}\nonumber \\
&\dot y(s)=\sum_{k=0}^n\lambda^k(s)\F(s,y(s),\w^k(s),\alpha^k(s)) \  \text{a.e. $s\in[0,S]$,} \quad
y(0)=\xb,\nonumber \\
&h(s,y(s))\le0 \quad \forall s\in[0,S], \qquad 
(S,y(S))\in\T^*, \nonumber
\end{align}}
\right.
$$
}
A process  $(S,\underline{\w},\underline{\alpha},\lambda, y)$ for $(P^*_r)$ is  referred to as {\em relaxed} process. As in the previous sections,  we can   identify the set  of strict sense processes  [extended processes]  with the subset  of relaxed processes   $(S,\underline{\w},\underline{\alpha},\lambda,y)$ with  $(\underline{\w},\underline{\alpha},\lambda)\in\V^{1+n}(S)\times\A^{1+n}(S)\times \Lambda_n^1(S)$  [$(\underline{\w},\underline{\alpha},\lambda)\in\W^{1+n}(S)\times\A^{1+n}(S)\times \Lambda_n^1(S)$]. We will use  $\Gamma^*$, $\Gamma^*_e$,   $\Gamma^*_{r}$ to denote the sets of  feasible strict sense, feasible extended,   and feasible relaxed processes, respectively. 
 
\vsm 
Throughout this section, we strengthen hypotheses {\bf (H2)}-{\bf (H3)} treating  time  as a state variable. As in Section \ref{S2}, we add to $(P^*_r)$  the variable $\xi(s)=\int_0^s\lambda(s')\,ds'$, $s\in[0,S]$   and call relaxed process any element $(S, \underline{\w},\underline{\alpha},\lambda,\xi,y)$,  where  $(\underline{\w},\underline{\alpha},\lambda)\in \W^{1+n}(S)\times\A^{1+n}(S)\times \Lambda_n(S)$ and $(\xi,y):=(\xi,y)[\underline{\w},\underline{\alpha},\lambda]$ on $[0,S]$.

\vsm We  shall consider  the following hypotheses, in which $(\bar S, \underline{\bar\w},\underline{\bar\alpha}, \bar\lambda,\bar\xi,\bar y)$ is  a given feasible relaxed process for $(P^*_r)$ and, for some $\theta>0$,   we set
$$
 \Sigma^*_{\theta}:=\left\{(t,x)\in\R\times\R^{n}: \ \ (t,x)\in (s, \bar y(s))+\theta\,\B, \ \ s\in[0,\bar S]\right\}.
$$  
{\em  \begin{itemize} 
\item[{\bf (H2)$^*$}]  The constraint function  $h$ is   $K_{_h}$-Lipschitz continuous  in $\Sigma^*_{\theta}$.
\item[{\bf (H3)$^*$}] 
{\rm (i)} The function $\F$ is continuous on $ \Sigma^*_{\theta}\times W\times A$.  Furthermore, there is some constant   $K_\F>0$ such that, for all $(s,x,w,a)$, $(s',x',w,a)\in \Sigma^*_{\theta}\times W\times A$:
\[
 |\F(s',x',w,a)-\F(s,x,w,a)|\le K_\F |(s',x')-(s,x)|.
\]
{\rm (ii)} There exists some continuous   increasing function $\varphi:\R_{\ge0}\to\R_{\ge0}$ with $\varphi(0)=0$ such that for any $(s,x,a)\in \Sigma^*_{\theta}\times A$,   we have
$$
\partial_{t,x}^c \F(s,x,w',a)\subseteq \partial_{t,x}^c \F(s,x,w,a)+\varphi(|w'-w|)\,\B \qquad\forall w',\,w\in W.
$$
 \end{itemize}}

Identify a continuous function $z:[0,\tau]\to\R^k$  with its extension to   $\R$, by constant extrapolation of  the left and right endpoint values. Then, for all   $\tau_1$,   $\tau_2>0$,   and    $(z_1, z_2)\in C^0([0,\tau_1];\R^k)\times C^0([0,\tau_2];\R^k)$, we define the distance
 \begin{equation}\label{dinfty}
d_\infty\big((\tau_1,  z_1),(\tau_2, z_2)\big) :=  
  | \tau_2 -  \tau_1|+ \| z_2-  z_1\|_\infty \qquad ( \| \cdot\|_{\infty}:=  \| \cdot\|_{L^\infty(\R)}).
  \end{equation} 
We can now extend the concepts of {\em local minimizer}, {\em local infimum gap}, and {\em isolated process}  to free end-time problems,  by `formally replacing  trajectories $y$ and $L^\infty$-norm over trajectories  with pairs $(S,y)$ endowed with the distance $d_\infty$'. For instance, if  $\bar z:=(\bar S,\underline{\bar\w},\underline{\bar\alpha},\bar\lambda, \bar y)$ is a feasible relaxed process,  {\em at  $\bar z$ there is  a local infimum} gap if there is some $\delta>0$ such that 
$$
 \ds \Psi(\bar S,\bar y(S))  <\,  \inf \left\{ \Psi(S,y(S)): \ \ (S,\w,\alpha,y)\in\Gamma^*, \ \ d_\infty\big((S,y),(\bar S,\bar y)\big)<\delta \right\}, 
 $$
for some continuous function  $\Psi:\R^{1+n}\to\R$,  while $\bar z$ is an {\em isolated process} if  
$$
\left\{ (S,\w,\alpha,y)\in  \Gamma^*:  \ \ d_\infty\big((S,y),(\bar S,\bar y)\big)<\delta \right\}=\emptyset.
$$
for some $\delta>0$. As in the case with fixed end-time, {\em at $\bar z$ there is a   local infimum  gap if and only if   $\bar z$ is isolated}.

\begin{definition}[Extremal and nondegenerate extremal] \label{DnormalFT}	 
Let  $\bar z:=(\bar S,\underline{\bar\w},\underline{\bar\alpha}, \bar\lambda,\bar y)$ be a feasible relaxed  process and assume that hypotheses {\bf (H2)$^*$}, {\bf (H3)$^*$} are verified.  Given a   function $\Psi:\R^{1+n}\to\R$ which is Lipschitz continuous on a neighborhood of $(\bar S,\bar y(\bar S))$,      we say that $\bar z$  is  a  {\em  $\Psi$-extremal}  if there exist a pair of paths $(p_*,p) \in W^{1,1}([0, \bar S]; \R^{1+n})$, $\gamma \geq 0$,  $\mu \in NBV^{+}([0, \bar S];\R)$, $(m_*,m): [0,\bar S] \to \R^{1+n}$ Borel measurable and $\mu$-integrable functions, verifying the following conditions: 
$$
\parbox[c][6.2cm]{1\textwidth}{%
\begin{align}
&\| p \|_{L^{\infty}}+ \| \mu \|_{TV} + \gamma \neq 0; \label{nontr} \\
& \big( \dot p_*, -\dot p\big)(s)  \in \sum_{k=0}^n \bar\lambda^k(s)\, {\rm co}\,  \partial_{t,x}  \left\{q(s)\cdot \F(s, \bar y(s),  \bar \w^k(s),\bar\alpha^k(s))\right\} \  \text{a.e. $s \in [0,\bar S]$;}  \nonumber \\
& \big(q_*(\bar S), -q(\bar S) \big) \in \gamma \partial \Psi\left( S, \bar y(S)\right)  +N_{\T } (\bar S, \bar y(\bar S) ); \nonumber \\
&\text{for every $k=0,\dots,n$,  for a.e. $s\in[0,\bar S]$, one has } \nonumber \\
&q(s)\cdot \F\Big(s, \bar y(s), \bar \w^k(s),\bar\alpha^k(s)\Big) 
 =  \max_{(w,a)\in W\times A}  q(s)\cdot\F\Big(s, \bar y(s), w,a\Big); 
 \label{cmax} \\
 & \sum_{k=0}^n \bar\lambda^k(s) q(s)\cdot \F(s, \bar y(s),  \bar \w^k(s),\bar\alpha^k(s))= q_*(s)  \ \text{a.e. $s \in [0,\bar S]$}; \label{cHam} \\
 &(m_*,m) (s) \in \partial_{t,x}^{>}\, h\left(s, \bar y(s)\right)   \text{ $\mu$-a.e. $s \in  [0,\bar S]$;}  \nonumber\\
& spt(\mu) \subseteq \{ s\in [0,\bar S] \text{ : } h\left(s, \bar y(s)\right) = 0 \}, \nonumber
\end{align}}
$$
where
$
(q_*,q)(s)  := 
\begin{cases}
(p_*,p)(s) + \int_{[0,s[} (m_*,m)(\sigma) \mu(d\sigma)  \qquad\,\, s\in [0,\bar S[, \\
(p_*,p)(\bar S) + \int_{[0, \bar S]} (m_*,m)(\sigma) \mu(d\sigma)  \qquad s=\bar S.
\end{cases}
$
%
%
%
%

\noindent A  $\Psi$-extremal  is {\em normal} if all possible choices of   $(p_*,p,\gamma,\mu, m_*,m)$ as above have $\gamma >0$, and   {\em abnormal}  when it is not normal. 
Given  a   $\Psi$-extremal   $\bar z$, we call {\em nondegenerate multiplier} any  set of multipliers  $(p_*,p,\gamma,\mu, m_*,m)$  and $(q_*,q)$ as above,   that also verify
\begin{equation} \label{nondegFT}
\mu(]0,S]) +\Vert q \Vert_{L^{\infty}} + \gamma \neq 0.
\end{equation}
A  $\Psi$-extremal is  {\em nondegenerate normal} if all the choices of  nondegenerate multipliers  have $\gamma >0$, and  it is {\em nondegenerate abnormal}  when there exists a  nondegenerate multiplier with $\gamma=0$.  In the following, abnormal [nondegenerate abnormal]  $\Psi$-extremals will be simply called   {\em abnormal  [nondegenerate abnormal] extremals}.  
 \end{definition}  
 
 Theorems \ref{Th1}, \ref{Th2} extend to free end-time optimization problems as follows.
\begin{theo}\label{Th12FT}  Let $\bar z:=(\bar S, \underline{\bar\w},\underline{\bar\alpha},\bar\lambda,\bar\xi,\bar y)$ be a feasible relaxed process for $(P_r^*)$, and suppose that at $\bar z$ there is a local infimum gap. 
\begin{itemize}
\item[{\rm (i)}] If hypotheses  {\bf (H1)}-{\bf (H2)$^*$}-{\bf (H3)$^*$} hold, then $\bar z$ is an abnormal extremal. 
\item[{\rm (ii)}] If, in addition, also hypothesis  {\bf (H4)} for $S=\bar S$ is verified, then $\bar z$ is a nondegenerate abnormal extremal.
\end{itemize}
  \end{theo}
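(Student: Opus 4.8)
The plan is to reduce the free end-time problem to a fixed end-time problem on the interval $[0,\bar S]$ and then invoke Theorems \ref{Th1} and \ref{Th2} directly. To this end I promote the physical time to an extra state component $x_0$ and introduce a scalar \emph{speed} control $v$ ranging in a fixed compact interval $[\underline v,\overline v]$ with $0<\underline v<1<\overline v$; on the \emph{fixed} horizon $[0,\bar S]$ I consider the augmented dynamics
$$
\widehat\F\big(s,(x_0,y),w,(v,a)\big):=\big(v,\ v\,\F(x_0,y,w,a)\big),\qquad (x_0,y)(0)=(0,\xb),
$$
with state constraint $\widehat h(s,x_0,y):=h(x_0,y)\le0$, target $(x_0(\bar S),y(\bar S))\in\T^*$ and cost $\Psi(x_0(\bar S),y(\bar S))$. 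The control $w$ keeps the sets $V\subseteq W$, while $(v,a)$ is the non-extended compact control. The augmented reference is the lift of $\bar z$ with $v\equiv1$, so that $x_0(s)=s$, $\widehat{\bar y}(s)=(s,\bar y(s))$, and the relaxed augmented velocity is $\sum_k\bar\lambda^k\widehat\F=(1,\sum_k\bar\lambda^k\F)$; it is feasible. The physical end-time $S=x_0(\bar S)=\int_0^{\bar S}v\,ds$ is now \emph{free} because the right endpoint of $x_0$ is unconstrained, so the free end-time problem is encoded as a fixed-horizon problem of the form $(P_r)$.

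Next I show that ``at $\bar z$ there is a local infimum gap'' in the $d_\infty$-sense of Section \ref{SFT} is equivalent to the augmented reference being isolated on $[0,\bar S]$ in the $L^\infty$-sense of Definition \ref{isolated}. Both implications follow from an affine reparametrization together with the uniform Lipschitz bound on $\bar y$ furnished by {\bf (H3)$^*$}(i). Given a strict-sense free-time process $(S,\w,\alpha,y)$ with $d_\infty\big((S,y),(\bar S,\bar y)\big)$ small, set $v\equiv S/\bar S$ (close to $1$, hence admissible): the reparametrized augmented trajectory $(x_0,y\circ x_0)$ is then $L^\infty$-close to $\widehat{\bar y}$, since $\bar y$ is Lipschitz and $x_0$ is close to the identity. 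Conversely, a strict-sense augmented trajectory $L^\infty$-close to $\widehat{\bar y}$ has $x_0$ strictly increasing (as $v\ge\underline v>0$) and close to the identity, so its inverse time-change yields a strict-sense free-time process with $d_\infty$ close to $(\bar S,\bar y)$. Hence isolation transfers in both directions and a local infimum gap at $\bar z$ induces one for the augmented fixed-horizon problem.

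It then remains to check that the augmented data inherit the standing assumptions and to translate the conclusions back. Hypothesis {\bf (H1)} is untouched (it concerns only $w$); {\bf (H2)} and {\bf (H3)$'$} for $\widehat h$, $\widehat\F$ follow from {\bf (H2)$^*$}, {\bf (H3)$^*$}, because the joint $(s,x)$-Lipschitz continuity becomes Lipschitz continuity in $(x_0,y)$, the bounded factor $v$ preserves the bounds and the subdifferential inclusion of {\bf (H3)$^*$}(ii), and uniform continuity of $\F$ in $w$ on the compact set $\Sigma^*_\theta\times W\times A$ supplies the modulus $\varphi$. For part (ii), {\bf (H4)} for the augmented problem is obtained from {\bf (H4)} with $S=\bar S$ by lifting all the approximating controls with $v\equiv1$: the augmented velocities then differ only in their $y$-block $\F$, and the state-part of $\partial^*\widehat h$ at the initial point reduces to $\partial^* h(0,\xb)$ in the $(x_0,y)$-variables, so \eqref{cqd} is reproduced verbatim with the same $\tilde\delta$. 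Applying Theorem \ref{Th1} (resp. \ref{Th2}) gives multipliers $(\widehat p,\gamma,\mu,\widehat m)$ with $\widehat p=(\hat p_0,p)$, $\widehat q=(\hat q_0,q)$, $\widehat m=(m_*,m)$. Setting $p_*:=-\hat p_0$, $q_*:=-\hat q_0$, the state-gradient of $\widehat q\cdot\widehat\F$, which at the reference speed equals $\partial_{t,x}\{q\cdot\F\}$ (the $s$-dependent $\hat q_0$ dropping out), yields the joint adjoint inclusion for $(\dot p_*,-\dot p)$; the transversality $-\widehat q(\bar S)\in\gamma\partial\Psi+N_{\T^*}$ becomes $(q_*(\bar S),-q(\bar S))\in\gamma\partial\Psi+N_{\T^*}$; $\widehat m\in\partial^{>}_{x_0,y}\widehat h=\partial^{>}_{t,x}h$ gives the measure condition; and the nontriviality and, for (ii), the nondegeneracy \eqref{nondeg} carry over unchanged since $\gamma$, $\mu$, $q$ are the same objects, producing \eqref{nontr} and \eqref{nondegFT}.

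The one genuinely new extremal relation, \eqref{cHam}, is produced by the \emph{speed} direction of the augmented maximum condition. Since $\widehat q\cdot\widehat\F=v\,(\hat q_0+q\cdot\F)$ and the reference speed $v\equiv1$ lies in the interior of $[\underline v,\overline v]$, comparing the reference value $\hat q_0+q\cdot\F(s,\bar y,\bar\w^k,\bar\alpha^k)$ with $\max_v v\big(\hat q_0+\max_{(w,a)}q\cdot\F\big)$ forces $\hat q_0=-\max_{(w,a)}q\cdot\F$ and, simultaneously, $q\cdot\F(s,\bar y,\bar\w^k,\bar\alpha^k)=\max_{(w,a)}q\cdot\F$ for every $k$; this is exactly \eqref{cmax} together with $q_*=\sum_k\bar\lambda^k q\cdot\F$, i.e. \eqref{cHam}. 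I expect this interior-maximization step and the careful verification of the gap-equivalence under reparametrization to be the only delicate points, the remaining items being the routine transcriptions described above.
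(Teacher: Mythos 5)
Your proposal is correct and follows essentially the same route as the paper: promote time to a state variable, introduce a bounded speed control around the reference value $1$ (your $v\in[\underline v,\overline v]$ is the paper's $1+\zeta$, $\zeta\in[-\bar\delta,\bar\delta]$), transfer isolation via the inverse time-change, check that {\bf (H1)}--{\bf (H4)} lift to the rescaled fixed-horizon problem, and apply Theorems \ref{Th1}, \ref{Th2}. The only difference is cosmetic: you spell out the interior-maximization argument yielding \eqref{cmax}--\eqref{cHam}, which the paper dispatches as a ``routine argument'' with a reference to \cite[Thm. 8.2.1]{OptV}.
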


\begin{proof}
 Let $\bar z:=(\bar S, \underline{\bar\w},\underline{\bar\alpha},\bar\lambda,\bar\xi,\bar y)$ be a feasible relaxed process at which there is a local infimum gap. By the above considerations, this is equivalent to suppose that $\bar z$ is an isolated process. Adapting a  standard time-rescaling procedure (see e.g. \cite[Thm. 8.7.1]{OptV}), we transform    problems $(P^*)$, $(P_e^*)$, and $(P_r^*)$  into   fixed end-time problems,  and show that $\bar z$ is  an isolated process also with respect to feasible strict sense processes  of a {\em rescaled, fixed end-time problem}. At this point, the thesis follows by  applying Theorems \ref{Th1}, \ref{Th2} to the rescaled problem.

From the fact that $\bar z$ is isolated, it follows that there exists some  $\delta>0$ such that
\bel{isoHp}
\left\{ (S,\w,\alpha,y)\in  \Gamma^*:  \ \ d_\infty\big((S,y),(\bar S,\bar y)\big)<3\delta \right\}=\emptyset.
\eeq
Set
 $\bar\delta:=\min\left\{\frac{\delta}{3\bar S\,K_\F},\frac{1}{2}\right\}$.
 
 We define  the  {\it rescaled  optimization problem} $(\hat P^*)$,  as
{\small
$$
(\hat P^*) 
\left\{
\parbox[c][2cm]{0.9\textwidth}{%
\begin{align}
&\qquad\qquad\text{minimize} \,\,\,  \Psi(y^*(\bar S),y(\bar S)) \nonumber \\
&\text{over   $(\w,\alpha,\zeta, y^*, y)\in \V (\bar S)\times\A (\bar S) \times L^{ 1}([0,\bar S];[-\bar\delta,\bar\delta])\times W^{1,1}([0,\bar S]; \R^{1+n})$ s.t. }\nonumber \\
& (\dot y^*, \dot y)(s) = (1+\zeta(s)) \big( 1,  \F(y^*(s),y(s),\w(s),\alpha(s)) \big)  \ \text{a.e.}, \quad (y^*,y)(0)=(0,\xb)   \nonumber \\
&h(y^*(s),y(s))\le0 \quad \forall s\in[0,\bar S], \qquad  
(y^*(\bar S),y(\bar S))\in\T^*, \nonumber
\end{align}}
\right.
$$
}
%
A process  $(\w,\alpha,\zeta, y^*, y)$ for the  fixed end-time problem $(\hat P^*)$ is referred to as a {\em rescaled strict sense process}. Let $\hat\Gamma^*$  denote the set of the {\em feasible} rescaled strict sense processes, that is, the set  rescaled strict sense processes that verify $h(y^*(s),y(s))\le0$   for all $s\in[0,\bar S]$ and 
$(y^*(S),y(S))\in\T^*.$  We call {\em rescaled extended processes} the processes $(\w,\alpha,\zeta, y^*, y)$ with $\w\in \W(\bar S)$ and  write $(\hat P_e^*)$ to denote the rescaled  extended problem associated to $(\hat P^*)$.  

\vsm
We can identify   $\bar z=(\bar S, \underline{\bar\w},\underline{\bar\alpha},\bar\lambda,\bar\xi,\bar y)$  with a process  $
 \check z:=(\underline{\check\w},\underline{\check\alpha},\underline{\check\zeta},\check\lambda,\check \xi,  \check y^*, \check y)\in\W^{2+n}(\bar S)\times \A^{2+n}(\bar S)\times(L^1[0,\bar S];[-\bar\delta,\bar\delta]))^{2+n}\times\Lambda_{n+1}(\bar S)\times W^{1,1}([0,\bar S];\R^{2+n}\times\R^{1+n})
$
of the relaxed problem associated to $(\hat P^*_e)$, by setting  
$$
\begin{array}{l}
\underline{\check\w}:=(w,\underline{\bar\w}), \ \ \underline{\check\alpha}:=(a,\underline{\bar\alpha}), \ \ \underline{\check\zeta}:=0, \ 
 \check\lambda:= (0,\bar\lambda), \ \ \check\xi=(0,\bar\xi),  \  \ \check y^*:=id, \ \ \check y := \bar y,  
\end{array}
$$
for   arbitrary $w\in W$ and $a\in A$.  Since  $\bar z$ is an isolated feasible relaxed process for the free end-time problem, $\check z$ is feasible and  isolated  for the relaxed rescaled problem. In particular, we claim that
\bel{isores}
\left\{ (\w,\alpha,\zeta,y^*,y) \in  \hat\Gamma^*:  \ \ \|(y^*,y)-(\check y^*,\check y) \|_{L^\infty([0,\bar S])}< \delta \right\}=\emptyset.
\eeq
Indeed, let   $(\w,\alpha,\zeta,y^*,y)$ be an arbitrary feasible, rescaled strict sense process verifying \linebreak$\|(y^*,y)-(\check y^*,\check y)\|_{L^\infty([0,\bar S])}< \delta $. Consider the time-transformation $y^*:[0,\bar S]\to[0,S]$, where 
$
 S:=y^*(\bar S).
 $
 Observe that $y^*$ is a strictly increasing, Lipschitz continuous function, with Lipschitz continuous inverse, $(y^*)^{-1}$. 
It can be deduced that the process $(S,\hat\w,\hat\alpha,\hat y)$, where
$$
(\hat\w,\hat\alpha,\hat y):=(\w,\alpha,y)\circ  (y^*)^{-1}  \quad \text{in $[0,S]$,}
$$  
is a feasible  strict sense process for the  free end-time problem  $(P^*)$,  i.e. $(S,\hat\w,\hat\alpha,\hat y)\in\Gamma^*$. Indeed, recalling the definitions  of $\bar\delta$ and $d_\infty$,\footnote{By definition, $\hat y$ and $\check y$ are replaced with their constant, continuous extensions  to $\R$.}   after some calculations, we get
$$ 
\begin{array}{l}
\ds d_\infty\big((S,\hat y),(\bar S,\bar y)\big)= 
 |S-\bar S|+\|\hat y-\bar y\|_{\infty} \le |y^*(\bar S)-\check y^*(\bar S)| \\[1.5ex]
\ds \  + \sup_{s\in[0, S\vee\bar S]}\left[|y((y^*)^{-1}(s\land S))-\check y((y^*)^{-1}(s\land  S))| +|\check y((y^*)^{-1}(s\land S))-\check y(s\land \bar S)|\right] \\
 \qquad \qquad  \le\|y^*-\check y^*\|_{L^\infty([0,\bar S])}+ \|y-\check y\|_{L^\infty([0,\bar S])} + \delta<  3\delta,
\end{array}
$$
since, in particular,   
$$
\sup_{s\in[0,S\vee\bar S]}|\check y((y^*)^{-1}(s\land S))-\check y(s\land \bar S)|\le  3K_\F  \bar S\,\bar\delta \le \delta .
$$ 
Therefore,   \eqref{isoHp} yields  
 \eqref{isores}, and   the feasible rescaled  relaxed process
$\check z$
 is isolated in $\hat\Gamma^*$, as claimed. 
In order to apply the results of Theorems \ref{Th1}, \ref{Th2} to $(\hat P^*)$  with reference to the process $\check z$, 
it remains to show that,  if we consider $(w,a,\lambda,\zeta)$ as control variables and $\tilde x:=(s,x)$ as  the state variable for the (now, time-independent)  problem $(\hat P^*)$,  all the hypotheses assumed in their statements are fulfilled.  To this aim, observe that  {\bf (H2)} trivially follows from {\bf (H2)$^*$},  while  {\bf (H3)$^*$} easily  implies  {\bf (H3)$'$}. In particular,  the compactness of  $\Sigma^*_\theta\times W\times A\times[-\bar\delta,\bar\delta]$ and the continuity of $(1+\zeta)\F$ on it,  guarantee the existence of a constant $M_\F>0$ and of a continuous increasing function $\varphi:\R_{\ge0}\to \R_{\ge0}$ with $\varphi(0)=0$ such that
$$
|(1+\zeta)\F(t,x,w,a)|\le M_\F , \quad |(1+\zeta)\F(t,x,w',a)-(1+\zeta)\F(t,x,w,a)|\le\varphi(|w'-w|),
$$
 for all  $(t,x,w',a,\zeta)$, $(t,x,w,a,\zeta)\in\Sigma^*_\theta\times W\times A\times[-\bar\delta,\bar\delta]$. Finally,  recalling that $\underline{\check\zeta}\equiv 0$ and   $\check y^*(s)=s$ for all $s\in[0,\bar S]$,  hypothesis 
{\bf (H4)} can be trivially reformulated as an hypothesis on the rescaled process $\check z$.
At this point, from Theorems \ref{Th1}, \ref{Th2}   we can derive that $\bar z$ is an abnormal extremal, nondegenerate  when hypothesis {\bf (H4)} is verified. In particular,  the only nontrivial results, namely conditions \eqref{cmax}, \eqref{cHam}, and the nontriviality conditions \eqref{nontr}, \eqref{nondeg} can be obtained  through routine arguments  (see e.g. the proof of \cite[Thm. 8.2.1]{OptV}). 
\end{proof}

Again, from Theorem  \ref{Th12FT} we can get normality tests for gap avoidance and sufficient controllability conditions 
 for the free end-time problem completely analogous to Theorems  \ref{Cor_Norm}- \ref{Cor_Contr} and  \ref{Cor_Norm2}-\ref{Cor_Contr2}, respectively.  
 

\section{An application to   non-convex, control-polynomial impulsive  problems}\label{S3}
We consider the free end-time optimal control problem: 
{\small 
$$
(\mathcal{P}^*)
\left\{
\parbox[c][3.1cm]{0.9\textwidth}{%
\begin{align}
&\qquad\qquad\text{minimize} \,\,\,  \Psi(T,x(T),v(T))  \nonumber
 \\
&  \text{over }  T>0,  \, (u,a,x,v) \in  L^d([0,T];U) \times L^1([0,T];A)\times  W^{1,1}([0,T];\R^{n+1})\ \text{ s.t.} \nonumber \\
&(\dot x, \dot v)(t)= \Bigg(f(t,x,a)+\sum_{k=1}^d\Bigg(\sum_{1\le j_1\leq\dots\leq j_k\le m}   g^k_{j_1,\dots,j_k}(t,x)\, \,u^{j_1}\cdots u^{j_k}\Bigg),   \left|u\right|^d \Bigg)\,  \text{a.e.,}  \nonumber \\ 
& (x,v)(0)=(\xb,0)  \nonumber \\
&h(t, x(t)) \leq 0 \qquad \text{ for all } t \in [0,T], \quad v(T)\leq K,     \quad (T,x(T)) \in \T^*.   \nonumber
\end{align}}
\right.
$$
}
%
%
Here, $U\subseteq\R^m$ is a  closed cone, $A\subseteq\R^q$ is a compact subset, $K>0$ is a  fixed constant, possibly equal to $+\infty$, and the target set $\T^*\subseteq\R^{1+n}$ is closed.  Notice   that $v(t)$ is simply  the  $L^d$-norm to the power $d$  of the control function $u$  on $[0,t]$. The variable $v$ is sometimes called   {\em fuel} or {\em energy}   and   $v\mapsto \Psi(t,x,\cdot)$  is usually assumed monotone nondecreasing for every $(t,x)$ (see e.g. \cite{MS03,MS14}).       The integer $d\ge1$ will be   called  the {\em degree}  of the control system.
Problem $(\mathcal{P}^*)$  is referred to as the {\em original problem} and  we call a process $(T,u,a,x,v)$ for $(\mathcal{P}^*)$ an {\em original process}.  
 We say that  $(T,u,a,x,v)$ is  {\em feasible} if $h(t, x(t)) \leq 0$   for all $t\in [0,T]$,  $v(T)\leq K$,    and $(T,x(T)) \in \T^*$.  

 \vsm
Throughout this section,  we shall consider the following structural  hypotheses: 
{\em\begin{itemize}
\item[{\bf (H5)}] the  functions $f:\R^{1+n}\times A\to \R^n$, $g^k_{j_1\dots j_k}:\R^{1+n}\to\R^n$ are continuous, all  $g^k_{j_1\dots j_k}$ are locally Lipschitz continuous, and $f(\cdot,\cdot,a)$  is locally Lipschitz continuous uniformly w.r.t. $a\in A$.   Furthermore, the constraint function $h:\R^{1+n}\to\R$ is  locally Lipschitz continuous.
\end{itemize}}

In order to apply the theory developed in the previous sections,   we reformulate problem $(\mathcal{P}^*)$  and embed it into a  free end-time extended problem with bounded  controls.  To do this, we use a compactification procedure based on a reparameterization technique,  commonly  adopted to obtain an impulsive extension of  unbounded control problems, as generalized to polynomial systems (see e.g. \cite{RS00,MS14}).  Let us choose
$$
W:=\left\{(w^0,w)\in\R_{\ge0}\times U: \ \ (w^0)^d+|w|^d=1\right\}, \quad V:=\left\{(w^0,w)\in W: \ \ w^0>0\right\}.
$$
For every $S>0$,  we set 
$\W(S):= L^1([0,S]; W)$,\footnote{The controls  $(\w^0,\w)\in\W(S)$ actually belong to $L^\infty\cap L^1$, since $W$ is compact.} $\V(S):= L^1([0,S]; V)$, and $\A(S):= L^1([0,S]; A)$, and 
 introduce the {\it space-time} or {\it extended  problem}: \footnote{The original time $t$ coincides now with the state component $y^0$, while $s$ is  the new `pseudo-time' variable.}
$$
(P^*_e)
\left\{
\parbox[c][2.8cm]{0.9\textwidth}{%
\begin{align}
&\qquad\qquad\text{minimize} \,\,\,  \Psi( y^0(S), y(S),\nu(S))  \nonumber \\
&\text{over $S>0$, }  (\w^0,\w,\alpha, y^0, y,\nu)  \in \W(S)\times\A(S)\times W^{1,1}([0,S];\R^{1+n+1})  \text{ s.t.} \nonumber \\
&(\dot y^0, \dot y, \dot\nu)(s)= \Big((\w^0)^d(s) , \F(y^0(s),y(s),\w^0(s),\w(s),\alpha(s)),    \left|\w(s)\right|^d \Big) \ \text{a.e.,}  \nonumber \\ 
& (y^0,y,\nu)(0)=(0,\xb,0),  \nonumber \\
&h(y^0(s),y(s)) \leq 0 \ \text{ for all } s \in [0,S] , \quad 
\left(y^0(S),y(S),\nu(S)\right)\in \T^*\times]-\infty,K],   \nonumber
\end{align}}
\right.
$$
%
%
%
%
where, for any $(t,x,w^0,w,a)\in\R\times\R^{n}\times\R_{\ge0}\times U\times A$, we have set
{\small
$$
\F(t,x,w^0,w,a):=f(t,x,a)(w^0)^d + 
 \sum_{k=1}^d\Bigg(\sum_{1\le j_1\leq\dots\leq j_k\le m}   g^k_{j_1,\dots,j_k}(t,x)\, \,w^{j_1}\cdots w^{j_k}\,(w^0)^{d-k}\Bigg).
$$
}
Adopting notation and  terminology of Section \ref{SFT}, a process  $(S,\w^0,\w,\alpha, y^0, y,\nu)$ of problem $(P^*_e)$ is referred to as an {\em extended} process  and it is {\em feasible} if $h(y^0(s),y(s)) \leq 0$ for all  $s \in [0,S]$  and
$\left(y^0(S),y(S),\nu(S)\right)\in \T^*\times]-\infty,K]$. When $\w^0>0$ almost everywhere, namely $(\w^0,\w)\in\V(S)$, $(S,\w^0,\w,\alpha, y^0, y,\nu)$  is said a {\em strict sense} process. The problem of minimizing $\Psi(y^0(S),y(S))$ over feasible strict sense processes is still denoted  by $(P^*)$,   and the sets of feasible extended processes and feasible strict sense processes are $\Gamma$ and $\Gamma^*$, respectively.   

 \vsm 
The original problem $(\mathcal{P}^*)$ can be identified with problem $(P^*)$, as established by the following lemma, immediate consequence of the chain rule.
 \begin{lemma}[Embedding]\label{EmbIMP} Assume hypothesis {\bf (H5)}. Then 
 the map  
\[
 {\mathcal I}: \{(T, u,a, x ,v),    \ \text{original processes}\}\to\{(S,\w^0,\w,\alpha, y^0, y,\nu),   \ \text{extended processes}\}
\]
defined as
$$
 {\mathcal I}(T, u,a, x ,v):= (S,\w^0,\w,\alpha,\zeta,y^0,y,\nu),
$$
where,  setting $\sigma(t):=t+v(t)$  for all $t\in[0,T]$,  
\[
\begin{array}{l}
\ds S:=\sigma(T), \quad (y^0,y,\nu)(s):=(id,x,v)\circ \sigma^{-1}(s) \ \ \forall s\in[0,S], \\ 
\ds(\w^0,\w)(s):=(1+|u|^d)^{-\frac{1}{d}} \, \,(1,u)\circ \sigma^{-1}(s), \quad \alpha(s):=a\circ \sigma^{-1}(s) \ \ \text{a.e. $s\in[0,S]$,}\footnotemark
\end{array}
\]
\footnotetext{Since  every   $L^d$-equivalence class contains  Borel measurable representatives,   we are tacitly assuming that all $L^d$-maps we are considering are Borel measurable.}
is injective and has as image the subset of strict sense processes.  Moreover,  $ {\mathcal I}$ maps any {\em feasible} original  process into a {\em feasible} strict sense process, with the same cost.
 \end{lemma}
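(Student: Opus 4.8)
The plan is to verify that the map $\mathcal{I}$ is well defined, injective, surjective onto strict sense processes, and cost- and feasibility-preserving, all as consequences of the time-reparameterization $\sigma(t)=t+v(t)$ and the chain rule. First I would check that $\sigma$ is a legitimate change of variable. Since $v(t)=\int_0^t|u|^d\,dt'$ is absolutely continuous and nondecreasing, $\sigma$ is strictly increasing (because $\dot\sigma=1+|u|^d\geq 1$ a.e.), Lipschitz continuous, and maps $[0,T]$ onto $[0,S]$ with $S=\sigma(T)$; moreover $\sigma^{-1}:[0,S]\to[0,T]$ is Lipschitz with $\dot{(\sigma^{-1})}=(1+|u|^d)^{-1}\circ\sigma^{-1}\in[0,1]$ a.e. Thus every composition with $\sigma^{-1}$ appearing in the definition of $\mathcal{I}$ produces an absolutely continuous or measurable function of the right regularity, and the Borel measurability caveat in the footnote ensures the compositions are genuinely well defined as measurable maps.

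Next I would establish that $\mathcal{I}(T,u,a,x,v)$ really solves the extended dynamics, which is the computational heart of the argument. Writing $s=\sigma(t)$, by the chain rule for absolutely continuous functions one has $\dot y^0(s)=\dot{(\sigma^{-1})}(s)=(1+|u|^d)^{-1}=(\w^0)^d$, using that $(\w^0)^d=(1+|u|^d)^{-1}$ by the very definition $(\w^0,\w)=(1+|u|^d)^{-1/d}(1,u)\circ\sigma^{-1}$; this also shows $(\w^0,\w)\in W$ pointwise, since $(\w^0)^d+|\w|^d=(1+|u|^d)^{-1}(1+|u|^d)=1$, and $\w^0>0$ a.e. because $\dot\sigma<\infty$, placing the image in $V$ and hence among strict sense processes. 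For the $y$-component, $\dot y(s)=\dot x(\sigma^{-1}(s))\,\dot{(\sigma^{-1})}(s)$, and substituting the original dynamics for $\dot x$ together with the homogeneity $w^{j_1}\cdots w^{j_k}(w^0)^{d-k}=(1+|u|^d)^{-1}u^{j_1}\cdots u^{j_k}$ and $f\cdot(w^0)^d=(1+|u|^d)^{-1}f$ yields exactly $\F(y^0,y,\w^0,\w,\alpha)$. The $\nu$-component is analogous: $\dot\nu(s)=\dot v(\sigma^{-1}(s))\,\dot{(\sigma^{-1})}(s)=|u|^d(1+|u|^d)^{-1}=|\w|^d$. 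The initial conditions $(y^0,y,\nu)(0)=(0,\xb,0)$ follow from $\sigma(0)=0$.

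I would then treat injectivity, surjectivity, feasibility, and cost. Injectivity follows because $\mathcal{I}$ is invertible on its image: from a strict sense process one recovers $T=y^0(S)$, and reversing the reparameterization via $t\mapsto y^0$ (which is again a legitimate absolutely continuous change of variable since $\dot y^0=(\w^0)^d>0$ a.e. on the strict sense set) reconstructs $(u,a,x,v)$, with $u=\w/\w^0\circ(y^0)^{-1}$; this same construction shows $\mathcal{I}$ is onto the strict sense processes. Feasibility is preserved state-constraint by state-constraint: $h(y^0(s),y(s))=h(\sigma^{-1}(s)\text{-image})=h(t,x(t))\leq 0$ since the constraint is pointwise and $s\mapsto t$ is a bijection of the time intervals, while $\nu(S)=v(T)\leq K$ and $(y^0(S),y(S))=(T,x(T))\in\T^*$ translate the endpoint constraints directly. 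Finally the cost is unchanged because $\Psi(y^0(S),y(S),\nu(S))=\Psi(T,x(T),v(T))$ by evaluation at the matched endpoints.

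The main obstacle I expect is purely technical rather than conceptual: justifying the chain rule and the change-of-variables formula for the compositions with $\sigma^{-1}$ in the merely absolutely continuous (not $C^1$) setting, and making sure the degenerate-set where $\dot\sigma$ fails to exist is Lebesgue-null so that all the a.e. identities above are valid. The homogeneity bookkeeping in the polynomial term — matching $\sum_k\sum_{j_1\leq\cdots\leq j_k}g^k_{j_1\dots j_k}u^{j_1}\cdots u^{j_k}$ against its $(w^0)^{d-k}$-weighted counterpart — is routine once one observes the single common factor $(1+|u|^d)^{-1}$ factors out of every monomial, so I would present that as a one-line verification rather than grinding through the multi-index sum.
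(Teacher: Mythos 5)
Your proof is correct and takes the same route as the paper, which states the lemma as an immediate consequence of the chain rule applied to the reparameterization $\sigma(t)=t+v(t)$ --- exactly the computation you carry out in detail (the homogeneity identity $\w^{j_1}\cdots \w^{j_k}(\w^0)^{d-k}=(1+|u|^d)^{-1}u^{j_1}\cdots u^{j_k}$ being the key bookkeeping step). The only slip is the claim that $\sigma$ itself is Lipschitz: since $u$ is merely $L^d$, $\sigma$ is only absolutely continuous, but this is harmless because your argument really uses only that $\sigma^{-1}$ is Lipschitz (with constant $1$, as $\dot\sigma\ge 1$ a.e.) and that $\sigma$ is an increasing absolutely continuous bijection of $[0,T]$ onto $[0,S]$.
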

 The extended problem $(P^*_e)$ consists thus  in considering processes $(S,\w^0,\w,\alpha, y^0, y,\nu)$,  where $\w^0$ may be zero on   nondegenerate subintervals of $[0,S]$. On these intervals, the time variable $t=y^0$ is constant  --i.e. the time  stops--, while the state variable $y$ evolves, according to $\F(t,y,0,w,\alpha)=$\linebreak $\sum_{1\le j_1\leq\dots\leq j_d\le m}   g^d_{j_1,\dots,j_d}(t,y)\, \,w^{j_1}\cdots w^{j_d}$, which can be called {\em fast dynamics}. For this reason,   problem $(P^*_e)$ is  often referred to  as the {\em impulsive extension} of the original problem $(\mathcal{P}^*)$  (more details on  polynomial impulsive problems can be found  in  \cite{RS00,MS14} and  references therein). 
 
\vsm

Let us introduce the   {\it unmaximized Hamiltonian} $H$, defined  by
$$
\begin{array}{l}
H(t,x,p_0,p,\pi,\w^0,\w,a)   := p_0(\w^0)^d + p\cdot\F(t,x,w^0,w,a) + \pi | \w|^d,
\end{array}
$$ 
for all $(t,x,p_0,p,\pi,\w^0,\w,a)\in \R^{1+n+1+n+1 }\times W\times A$. 
The concepts of {\em extremal}  and {\em nondegenerate extremal}  read now as follows:  
\begin{definition}[Extremal and nondegenerate extremal]\label{extremalImp}
Assume  {\bf (H5)} and let $\bar z:=(\bar S, \bar\w^0,\bar\w,\bar\alpha, \bar{y}^0,\bar y,\bar\nu)$ be a feasible extended  process.  Given a cost  function $\Psi$ which is Lipschitz continuous on a neighborhood of $(\bar y^0(\bar S),\bar y(S),\bar\nu(\bar S))$,      we say that $\bar z$  is  a  {\em  $\Psi$-extremal} if there exist a path $(p_0,p) \in W^{1,1}([0, \bar S];\R\times\R^n)$, $\gamma \geq 0$, $\pi \leq 0$, $\mu \in NBV^{+}([0, \bar S];\R)$, $(m_0,m) : [0, \bar S] \to \R^{1+n}$ Borel measurable and $\mu$-integrable functions, verifying the following conditions:
$$
\parbox[c][6.2cm]{1\textwidth}{%
\begin{align}
&\| p_0 \|_{L^{\infty}} +\| p \|_{L^{\infty}}+ \| \mu \|_{TV} + \gamma \neq 0;  \label{fe1imp} \\
&(-\dot p_0, -\dot p)(s) \in {\rm co} \ \partial_{t,x} \ H\Big(\bar y^0(s), \bar y(s), q_0(s) , q(s),\pi,  \bar \w^0(s),\bar \w(s),\bar\alpha(s)\Big)\ \text{a.e.;}  \nonumber \\
&\left(-q_0(\bar S), -q(\bar S) , -\pi \right) \in \gamma \partial \Psi\left(\bar{y}^0(\bar S), \bar y(\bar S), \bar\nu(\bar S)\right)  \nonumber \\
&\qquad\qquad\qquad\qquad \qquad\qquad  +N_{\T^*\times]-\infty,K]}\left(\bar{y}^0(\bar S), \bar y(\bar S),   \bar\nu(\bar S)\right);  \label{trans_condimp} \\
&\text{ for a.e. $s\in[0, S]$, one has } \nonumber \\
&H\Big(\bar y^0(s), \bar y(s), q_0(s) , q(s),\pi,  \bar \w^0(s),\bar \w(s),\bar\alpha(s)\Big) \nonumber \\
& \qquad\qquad=  \max_{(\w^0,\w,a)\in W\times A}  H\Big(\bar y^0(s), \bar y(s), q_0(s) , q(s),\pi,  \w^0,\w,a\Big)=0;
 \label{maxhamimp} \\
 &(m_0,m)(s) \in \partial_{t,x}^{>}\, h\left(\bar{y}^0(s), \bar y(s)\right) \qquad  \text{$\mu$-a.e.;} \nonumber \\
& spt(\mu) \subseteq \{ s\in[0,\bar S] \text{ : } h\left( \bar{y}^0(s), \bar y(s)\right) = 0 \}, \nonumber
\end{align}}
$$
%
%
%
where
$
(q_0,q)(s) := 
\begin{cases}
 (p_0,p)(s) +  \int_{[0,s[} (m_0,m)(\tau) \mu(d\tau)\, \qquad \qquad \qquad \,\, s\in[0, \bar S[, \\
  (p_0,p)(\bar S) + \int_{[0, \bar S]} (m_0,m)(\tau) \mu(d\tau)\, \qquad \qquad\qquad s=\bar S.
\end{cases}
$

Given a $\Psi$-extremal  $\bar z$ we call {\em nondegenerate multipliers}  all   $(p_0,p,\pi,\gamma,\mu, m_0,m)$  and $(q_0,q)$ as above,   that also verify
\begin{equation} \label{nondegFTimp}
\mu(]0,S]) +\Vert q_0 \Vert_{L^{\infty}}  +\Vert q \Vert_{L^{\infty}} + \gamma \neq 0.
\end{equation}

\noindent If $\gamma \partial_v\Psi\left(\bar{y}^0(\bar S), \bar y(\bar S), \bar\nu(\bar S)\right) = 0$ and $\bar\nu(\bar S) < K$, then $\pi=0$. Furthermore,  if $\bar y^0(0) < \bar y^0(\bar S)$,   \eqref{fe1imp} [\eqref{nondegFTimp}] can be strengthened to 
$$
\| p \|_{L^{\infty}}+ \| \mu \|_{TV} + \gamma \neq 0 \quad  [\mu(]0,S]) +\Vert q \Vert_{L^{\infty}} + \gamma \neq 0].
$$
\end{definition} 

In  the special  case of the impulsive extension considered in this section, given a  feasible extended  process $(\bar S, \bar\w^0,\bar\w,\bar\alpha, \bar{y}^0,\bar y,\bar\nu)$ the constraint qualification condition {\bf (H4)} for nondegeneracy  can be replaced by the following weaker assumptions:\footnote{We recall that  $\Omega=\{(t,x): \ h(t,x)\le 0\}$.}
 \vsm
{\em  \begin{itemize} 
\item[{\bf (H6)}] If $(0,\xb)\in \partial \Omega$,  there are some   $\tilde\delta>0$,  $\bar s\in]0,\bar S]$,  some   sequence of  strict sense processes \linebreak $(\tilde \w^0_i, \tilde \w_i,\tilde\alpha_i, \tilde y^0_i,\tilde y_i,\tilde\nu_i)_i\subset \V(\bar s)\times\A(\bar s)\times W^{1,1}([0,\bar s];\R^{1+n+1})$, some sequences  $(\hat \w_i^0,\hat \w_i,\hat\alpha_i)_i\subset \W(\bar s)\times\A(\bar s)$,  and   $(\tilde r_i)_i\subset L^1([0,\bar s];\R_{\ge0})$ with $\ds\lim_{i\to+\infty}\|\tilde r_i\|_{L^1([0,\bar s])}=0$,  such that the following properties {\rm (i)}--{\rm (iii)} are verified.
\begin{itemize}
\item[{\rm (i)}]
 For every $i$, one has
$$
  h(\tilde y^0_i(s),\tilde y_i(s))\le 0 \qquad \forall s\in[0,\bar s]; 
  $$
 \item[{\rm (ii)}]
 for every $i$, there is a Lebesgue measurable subset $\tilde E_i\subset[0,\bar s]$ such that  
$$
 \begin{array}{c}
 (\tilde \w^0_i, \tilde \w_i)(s)\in (\bar \w^0,\bar\w)(s) + \tilde r_i(s) \B,  \quad  \tilde\alpha_i(s)=\bar\alpha(s), \quad \text{a.e. $s\in \tilde E_i$;} \\[1.5ex]
  \lim_{i\to+\infty}\ell(\tilde E_i)=\bar s;
  \end{array}
  $$
  \item[{\rm (iii)}] for every $i$,  
  for all $(\zeta_0, \zeta)\in\partial^* h(0, \xb)$,  and for a.e.  $s\in[0,\bar s]$,  one has
 $$
\begin{array}{l}
\ds  \zeta_0\cdot [(\hat \w_i^0(s))^d-(\tilde \w_i^0(s))^d] \\
 \qquad+ \zeta\cdot  \big[\F(0 ,\xb,(\hat \w_i^0,\hat \w_i, \hat\alpha_i)(s)) -\F( 0,\xb, (\tilde \w_i^0, \tilde \w_i, \tilde\alpha_i)(s))\big] \leq-\tilde\delta.
\end{array}
$$
\end{itemize}
\end{itemize}}
\vsm
  In some situations, hypothesis {\bf (H6)} simplifies considerably. 
  \begin{lemma}\label{LH6} Assume {\bf (H5)}. Let $(0,\xb)\in\partial\Omega$   and let $\bar z:=(\bar S, \bar\w^0,\bar\w,\bar\alpha, \bar{y}^0,\bar y,\bar\nu)$ be a feasible  extended process.   If   there are some   $\tilde\delta>0$,  $\bar s\in]0,\bar S]$, and an extend control  $(\hat \w^0,\hat \w,\hat\alpha)\in \W(\bar s)\times\A(\bar s)$ such that,   for all $(\zeta_0, \zeta)\in\partial^* h(0, \xb)$ and for a.e.  $s\in[0,\bar s]$, 
\bel{cqdsimple}
\begin{array}{l}
\ds  \zeta_0\cdot [(\hat \w^0(s))^d-(\bar\w^0(s))^d] \\
\ \ + \zeta\cdot  \big[\F(0,\xb,(\hat \w^0,\hat \w, \hat\alpha)(s)) -\F(0,\xb, (\bar\w^0, \bar\w, \bar\alpha)(s))\big] \leq-\tilde\delta.
\end{array}
\eeq
and either  $\bar\w^0>0$   a.e. in $[0,\bar s]$, or  there is some $\tilde\delta_1>0$ such that,    for a.e.  $s\in[0,\bar s]$,
\bel{ipfc}
\begin{array}{l}
\ds \sup_{(\zeta_0, \zeta)\in\partial^* h(0, \xb)}\left[ \zeta_0\cdot  (\bar\w^0(s))^d 
 + \zeta\cdot  \F(0,\xb, ((\bar\w^0, \bar\w), \bar\alpha)(s))\right]  \leq-\tilde\delta_1,  
 \end{array}
\eeq
then condition  {\bf (H6)} is satisfied.
 \end{lemma}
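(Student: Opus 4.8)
The plan is to verify the three requirements of \textbf{(H6)} directly, reducing both alternatives to the scalar qualification \eqref{cqdsimple} and, in the degenerate alternative, to the inward-pointing estimate \eqref{ipfc}. Two preliminary reductions simplify matters: the comparison control in \textbf{(H6)}(iii) may be taken independent of $i$, namely $(\hat\w^0_i,\hat\w_i,\hat\alpha_i)\equiv(\hat\w^0,\hat\w,\hat\alpha)$, and it suffices to prove \textbf{(H6)} on some subinterval $[0,\bar s']$ with $\bar s'\in\,]0,\bar s]$, since \eqref{cqdsimple} and \eqref{ipfc} persist on the smaller interval and $\bar s$ is existentially quantified in \textbf{(H6)}. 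In the first alternative $\bar\w^0>0$ a.e., so $(\bar\w^0,\bar\w)\in V$ a.e.\ and the reference process is already strict sense: taking $(\tilde\w^0_i,\tilde\w_i,\tilde\alpha_i,\tilde y^0_i,\tilde y_i,\tilde\nu_i):=(\bar\w^0,\bar\w,\bar\alpha,\bar y^0,\bar y,\bar\nu)$, $\tilde r_i\equiv0$ and $\tilde E_i:=[0,\bar s]$, conditions (i)--(ii) follow from the feasibility of $\bar z$ and (iii) is precisely \eqref{cqdsimple}.

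The content lies in the second alternative, where $\bar\w^0$ may vanish on a set of positive measure. I would manufacture a strict sense approximation by bumping up the first coordinate: for $\eps_i\downarrow0$ set $\tilde\w^0_i:=\max\{\bar\w^0,\eps_i\}$ and, on $\{\bar\w^0<\eps_i\}$ (where $\bar\w\neq0$, as $(\bar\w^0,\bar\w)\in W$ forces $|\bar\w|^d=1-(\bar\w^0)^d>0$), rescale $\tilde\w_i:=\big((1-\eps_i^d)^{1/d}/|\bar\w|\big)\bar\w$, keeping $(\tilde\w^0_i,\tilde\w_i)=(\bar\w^0,\bar\w)$ elsewhere. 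Because $U$ is a cone and the rescaling factor lies in $\,]0,1]$, one has $(\tilde\w^0_i,\tilde\w_i)(s)\in V$ a.e.\ with $\tilde\w^0_i\ge\eps_i>0$, and an elementary estimate gives $\|(\tilde\w^0_i,\tilde\w_i)-(\bar\w^0,\bar\w)\|_{L^\infty}\le 2\eps_i$; by continuous dependence the strict sense trajectories $(\tilde y^0_i,\tilde y_i,\tilde\nu_i)$ then converge uniformly to $(\bar y^0,\bar y,\bar\nu)$. Choosing $\tilde r_i:=2\eps_i$ and $\tilde E_i:=[0,\bar s']$ yields (ii), while (iii) follows by subtracting \eqref{cqdsimple} from the desired inequality: the discrepancy is bounded by $|(\zeta_0,\zeta)|$ times $|(\tilde\w^0_i)^d-(\bar\w^0)^d|+|\F(0,\xb,\tilde\w^0_i,\tilde\w_i,\bar\alpha)-\F(0,\xb,\bar\w^0,\bar\w,\bar\alpha)|$, which is $\le C\eps_i$ since $\partial^*h(0,\xb)$ is bounded and $\F$ is polynomial, hence Lipschitz, in the controls; thus \eqref{cqdsimple} passes to $(\tilde\w^0_i,\tilde\w_i,\bar\alpha)$ with $\tilde\delta$ replaced by $\tilde\delta/2$ for $i$ large.

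The hard part is condition (i), the exact feasibility $h(\tilde y^0_i,\tilde y_i)\le0$ on all of $[0,\bar s']$, since the bumped trajectory could cross $\partial\Omega$ wherever the reference arc touches it; this is exactly what \eqref{ipfc} is for. I would set $g_i(s):=h(\tilde y^0_i(s),\tilde y_i(s))$, which is Lipschitz with $g_i(0)=h(0,\xb)=0$, and use the standard Clarke chain-rule bound
\[
\dot g_i(s)\le\max_{(\zeta_0,\zeta)\in\partial^c h(\tilde y^0_i(s),\tilde y_i(s))}\Big[\zeta_0\,(\tilde\w^0_i(s))^d+\zeta\cdot\F\big(\tilde y^0_i(s),\tilde y_i(s),\tilde\w^0_i(s),\tilde\w_i(s),\bar\alpha(s)\big)\Big]\quad\text{a.e.}
\]
By \eqref{ipfc} the corresponding quantity at $(0,\xb)$ with the reference control is $\le-\tilde\delta_1$ on $\partial^*h(0,\xb)$, hence, by linearity in $(\zeta_0,\zeta)$, on $\partial^c h(0,\xb)=\mathrm{co}\,\partial^*h(0,\xb)$. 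Invoking the upper semicontinuity of $\partial^c h$, the Lipschitz continuity of $\F$ in $(t,x)$ from \textbf{(H5)}, the uniform speed bound $M$ on the dynamics, and the control closeness $\le2\eps_i$, this negativity survives with margin $-\tilde\delta_1/2$ as long as $(\tilde y^0_i(s),\tilde y_i(s))$ stays in a fixed neighborhood of $(0,\xb)$ and $i$ is large. Since $|(\tilde y^0_i(s),\tilde y_i(s))-(0,\xb)|\le Ms$ uniformly in $i$, I finally shrink $\bar s$ to $\bar s'$ with $M\bar s'$ below that neighborhood's radius, so that $\dot g_i\le-\tilde\delta_1/2$ a.e.\ on $[0,\bar s']$ and hence $g_i\le0$ there, giving (i). The delicate points---the validity of the chain-rule inequality for the Lipschitz composition $g_i$ and the $i$-independence of the neighborhood radius (hence of $\bar s'$)---are both settled by the uniform speed bound and the $i$-independent moduli of continuity furnished by \textbf{(H5)} and the compactness of $W$.
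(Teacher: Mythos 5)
Your proof is correct and follows essentially the same route as the paper: the first alternative is handled by taking the reference control itself, and in the second the paper likewise lifts $\bar\w^0$ away from zero (perturbing on $\{\bar\w^0=0\}$ rather than $\{\bar\w^0<\eps_i\}$), verifies \textbf{(H6)}(iii) by the same Lipschitz perturbation of \eqref{cqdsimple}, and obtains \textbf{(H6)}(i) from \eqref{ipfc}. The only cosmetic difference is that the paper deduces (i) via the Lebourg mean value theorem applied to $h(\tilde y^0_i(s),\tilde y_i(s))-h(0,\xb)$ instead of your differential inequality for $s\mapsto h(\tilde y^0_i(s),\tilde y_i(s))$; both rest on the same upper semicontinuity and small-time arguments, and your explicit shrinking of $\bar s$ makes precise a point the paper leaves implicit.
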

 \begin{proof}[Proof of Lemma \ref{LH6}]  Let us first suppose that  $\bar\w^0>0$   a.e. in $[0,\bar s]$. Then,  conditions {\bf (H6)},(i),(ii)  are verified by  choosing, for every $i$,  $(\tilde \w^0_i, \tilde \w_i,\tilde\alpha_i)=(\bar w^0,\bar w,\bar\alpha)$, while  {\bf (H6)},(iii) follows directly from \eqref{cqdsimple}, by taking $(\hat \w_i^0,\hat \w_i, \hat\alpha_i)\equiv(\hat \w^0,\hat \w,\hat\alpha)$ for every $i$.  If instead \eqref{ipfc} is  assumed, let us consider a sequence $\delta_i\downarrow0$  and for every $i$, let us  set
 $$
( \tilde \w^0_i,\tilde \w_i,\tilde\alpha_i)(s):=\begin{cases} (\bar \w^0,\bar \w,\bar\alpha)(s)\  \  \ \qquad\qquad \text{if $\bar \w^0(s)>0$,} \\
 (\delta_i,\sqrt[d]{1-\delta_i^d}\,\bar \w(s),\bar\alpha(s)) \  \ \text{if $\bar \w^0(s)=0$,}
\end{cases}
$$
 for a.e.  $s\in[0,\bar s]$.  Then,   $( \tilde \w_i^0,\tilde \w_i,\tilde\alpha_i)\in\V(\bar s)\times\A(\bar s)$,  it verifies {\bf (H6)},(ii)  with $\tilde E_i=[0,\bar s]$   and  $\tilde r_i\equiv\delta_i$. Let $(\tilde y_i^0,\tilde y_i,\tilde\nu_i)$ be the corresponding solution   of the extended control system in $(P^*_e)$ with initial condition $(\tilde y^0_i,\tilde y_i,\tilde\nu_i)(0)=(0,\xb,0)$.
From   condition \eqref{ipfc}, using the  Lebourgh Mean Value Theorem   \cite[Th. 4.5.3]{OptV}    to estimate  `$h(\tilde y_i^0(s),\tilde y_i(s))- h(0,\xb)$',  one can derive that    $h(\tilde y^0_i(s),\tilde y_i(s))\le 0$  for all  $s\in[0,\bar s]$, for every $i$ large enough, so proving the validity of {\bf (H6)},(i).  Finally, from condition  \eqref{cqdsimple} (by adding and subtracting `$\zeta_0\cdot  (\tilde\w^0_i(s))^d 
 + \zeta\cdot  \F(0,\xb, (\tilde\w^0_i, \tilde\w_i, \tilde\alpha_i)(s))$' and by taking  $(\hat \w^0_i, \hat \w_i,\hat\alpha_i)=(\hat w^0,\hat w,\hat\alpha)$)   we get condition  {\bf (H6)},(iii), possibly reducing $\tilde\delta$, for all $i$ large enough.
 \end{proof} 

\begin{theo}\label{Th12imp}  Let $\bar z:=(\bar S, \bar\w^0,\bar\w,\bar\alpha, \bar{y}^0,\bar y,\bar\nu)$ be a feasible   process for the impulsive extension $(P^*)$, and suppose that at $\bar z$ there is a local infimum gap. If hypothesis  {\bf (H5)} is verified, then $\bar z$ is an abnormal extremal.  If  hypothesis  {\bf (H6)} is also satisfied, then $\bar z$ is a nondegenerate abnormal extremal.
  \end{theo}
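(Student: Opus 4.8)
The strategy is to recognize the impulsive extension $(P_e^*)$ as a particular free end-time problem of the type treated in Section \ref{SFT}, and then invoke Theorem \ref{Th12FT}. Concretely, I read the \emph{pseudo-time} $s\in[0,S]$ as the (free) independent variable, the triple $(y^0,y,\nu)$ as the state, the pair $(\w^0,\w)\in W$ (with strict-sense set $V$) together with $\alpha\in A$ as the controls, and the autonomous-in-$s$ dynamics
$$
\F^{*}\big((y^0,y,\nu),(\w^0,\w),\alpha\big):=\big((\w^0)^d,\ \F(y^0,y,\w^0,\w,\alpha),\ |\w|^d\big).
$$
The state constraint becomes the lift $\tilde h(s,(y^0,y,\nu)):=h(y^0,y)$, which depends on neither $s$ nor $\nu$; the cost $\Psi(y^0(S),y(S),\nu(S))$ is independent of the final pseudo-time $S$; and the target is $\R\times\T^{*}\times]-\infty,K]$, free in the pseudo-time slot. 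The reference $\bar z$ embeds as a feasible relaxed process whose control components all coincide.

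Next I verify \textbf{(H1)}, \textbf{(H2)$^*$}, \textbf{(H3)$^*$} for this reformulation. For \textbf{(H1)} take $V_i:=\{(w^0,w)\in W:\ w^0\ge 1/i\}$, which are closed, increasing, and exhaust $V$. Condition \textbf{(H2)$^*$} is immediate from \textbf{(H5)} (local Lipschitzness of $h$ gives Lipschitzness on the compact tube $\Sigma^{*}_{\theta}$, and the lift is trivially $s$-independent). For \textbf{(H3)$^*$}(i) the continuity and the Lipschitz bound follow from the polynomial-in-$(\w^0,\w)$ structure of $\F^*$ with coefficients $f,g^k_{j_1\dots j_k}$ that are Lipschitz in $(y^0,y)$ on the compact set, while the $s$-Lipschitz part is vacuous. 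For \textbf{(H3)$^*$}(ii) I write $\F^{*}(t,x,w',a)=\F^{*}(t,x,w,a)+D(t,x)$, where $D$ collects the monomial differences $P_\beta(w')-P_\beta(w)$ multiplied by the Lipschitz coefficients; since those coefficients have bounded Clarke Jacobians on the tube and the monomials are uniformly continuous on $W$, the map $D$ is Lipschitz in $(t,x)$ with constant $\le\varphi(|w'-w|)$, and the Clarke sum rule $\partial_{t,x}^{c}(\F^{*}(\cdot,w)+D)\subseteq\partial_{t,x}^{c}\F^{*}(\cdot,w)+\varphi(|w'-w|)\B$ gives exactly \textbf{(H3)$^*$}(ii). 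Since the local infimum gap makes $\bar z$ isolated, Theorem \ref{Th12FT}(i) then yields that $\bar z$ is an abnormal extremal in the sense of Definition \ref{DnormalFT}.

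It remains to rewrite the multipliers of Definition \ref{DnormalFT} in the form of Definition \ref{extremalImp}. I denote the state costate by $q=(q_0,q,q_\nu)$ and set $\pi:=q_\nu$. Because $\tilde h$ is independent of $s$ and $\nu$, the corresponding components of $(m_*,m)$ vanish, so $(m_0,m)\in\partial_{t,x}^{>}h$; moreover the pseudo-time costate satisfies $q_{*}=p_{*}$ with $\dot p_{*}=0$, and the transversality in the free pseudo-time slot forces $q_{*}(\bar S)=0$, hence $q_{*}\equiv0$. Together with \eqref{cHam} and the maximum condition this produces the normalization $\max_{(\w^0,\w,a)}H=0$, i.e. \eqref{maxhamimp}; likewise the absence of $\nu$ from both $\F^{*}$ and $\tilde h$ gives $\pi$ constant, and the $\nu$-slot of the transversality, with $N_{]-\infty,K]}\subseteq\R_{\ge0}$, yields $\pi\le0$ (and $\pi=0$ when $\bar\nu(\bar S)<K$ and $\gamma\partial_v\Psi=0$). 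Finally, when $\bar y^0(0)<\bar y^0(\bar S)$ one has $\bar\w^0>0$ on a set of positive measure; if $\| p\|_{L^\infty}+\|\mu\|_{TV}+\gamma=0$ then $q\equiv0$, and \eqref{maxhamimp} together with the $p_0$-adjoint force $q_0\equiv0$, whence $p_0\equiv0$, contradicting \eqref{fe1imp}, so \eqref{fe1imp} strengthens as claimed. These are the routine maximum-principle manipulations already used for Theorem \ref{Th12FT}.

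For the nondegenerate statement I check that \textbf{(H6)} implies \textbf{(H4)} for the reformulated problem. Since $\bar z$ is an extended process, its control components coincide, so \textbf{(H6)}(ii) is exactly the control-closeness \eqref{tildew} and \textbf{(H6)}(i) is \eqref{htilde}; the reachable gradient of $\tilde h$ at the initial point has the form $(0,\zeta_0,\zeta,0)$ with $(\zeta_0,\zeta)\in\partial^* h(0,\xb)$, and pairing its state part $(\zeta_0,\zeta,0)$ with $\F^{*}$ reduces to $\zeta_0(\w^0)^d+\zeta\cdot\F$, so \textbf{(H6)}(iii) is precisely the constraint qualification \eqref{cqd}, with extended approximating controls $\hat\w_i$ admissible by Remark \ref{RH4}(3) (as \textbf{(H3)$'$} holds here). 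After extending the approximating controls from $[0,\bar s]$ to $[0,S]$ by the reference controls and invoking the relaxation theorem (cf. Remark \ref{RH4}(2)), the remaining trajectory convergence \eqref{convtilde} follows from $L^1$-convergence. Theorem \ref{Th12FT}(ii) then upgrades the conclusion to nondegenerate abnormality, the only change in the translation being the passage from \eqref{nontr} to \eqref{nondegFT}, which yields \eqref{nondegFTimp}. I expect the main obstacle to be this faithful passage between Definitions \ref{DnormalFT} and \ref{extremalImp}---in particular the derivation of the normalization $\max H=0$, of the sign and constancy of $\pi$, and of the strengthened nontriviality when the physical time progresses---while the genuinely set-valued verification of \textbf{(H3)$^*$}(ii) is the remaining technical point, resolved by the Clarke sum rule above.
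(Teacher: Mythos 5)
Your proposal is correct and follows essentially the same route as the paper: verify {\bf (H1)}, {\bf (H2)$^*$}, {\bf (H3)$^*$} from {\bf (H5)} (using the control-polynomial structure for {\bf (H3)$^*$}(ii)), deduce {\bf (H4)} from {\bf (H6)} by extending the approximating controls beyond $[0,\bar s]$, and invoke Theorem \ref{Th12FT}. Your explicit translation of the multipliers of Definition \ref{DnormalFT} into the form of Definition \ref{extremalImp} (the normalization $\max H=0$, the sign and constancy of $\pi$, the strengthened nontriviality when $\bar y^0(0)<\bar y^0(\bar S)$) is carried out correctly and supplies detail the paper dismisses as routine.
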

  \begin{proof}  It is sufficient to show that hypothesis {\bf (H5)} allows the application of Theorem \ref{Th12FT}, (i), while assuming {\bf (H5)}-{\bf (H6)}, Theorem \ref{Th12FT}, (ii) is applicable. To this aim, we observe that hypothesis {\bf (H1)} is trivially verified,  by choosing, e.g.,  $V_i:=\{(w^0,w)\in V: \ \ w^0\ge \frac{1}{i+1}\}$ for every $i\in\N$, while  {\bf (H5)}  yields {\bf (H2)$^*$} directly.  Condition  {\bf (H3)$^*$} easily follows  from  {\bf (H5)}, taking into account the control-polynomial structure  of the dynamics  as regards point (ii). 
  To prove that   {\bf (H5)}-{\bf (H6)} imply condition   {\bf (H4)},  let us consider a sequence $\delta_i\downarrow0$  and for every $i$, define the strict sense control 
$$
(\check\w^0_i, \check\w_i,\check\alpha_i)(s):=\begin{cases} (\tilde \w^0_i, \tilde \w_i,\tilde\alpha_i)(s) \,\quad\qquad\qquad\qquad\text{if $s\in[0,\bar s]$,} \\
(\bar \w^0, \bar \w,\bar\alpha)(s) \ \ \,\quad\qquad\qquad\qquad\text{if $s\in]\bar s, \bar S]$ and $\bar \w^0(s)>0$,} \\
(\delta_i,\sqrt[d]{1-\delta_i^d} \,\bar \w (s),\bar\alpha (s)) \, \ \qquad\text{if $s\in]\bar s, \bar S]$ and $\bar \w^0(s)=0$,}
\end{cases} 
$$
where  $(\tilde \w^0_i, \tilde \w_i,\tilde\alpha_i)$ is as in {\bf (H6)}.  By identifying, for every $i$, the strict sense process, say $(\check\w^0_i, \check\w_i,\check\alpha_i, \check y_i^0,\check y_i,\check\nu_i)$,  of $(P^*)$ corresponding to $(\check\w^0_i, \check\w_i,\check\alpha_i)$ with a relaxed process --as we have been doing since the Introduction--,   we derive that conditions {\bf (H4)},(ii), and  {\bf (H4)},(iii)  on   $\check E_i:=\tilde E_i\cup]\bar s,\bar S]\subset[0,\bar S]$, are verified. Condition  {\bf (H4)},(i) follows from well-known continuity properties of the input-output map, associated to the control system in  $(P^*)$. Finally,   in view of   Remark \ref{RH4},(3), {\bf (H6)},(iii)  implies    
{\bf (H4)},(iv), although the controls $(\hat \w_i^0,\hat \w_i, \hat\alpha_i)$ are  extended, not necessarily strict sense,  controls.  \end{proof}
As corollaries, we have:
\begin{theo}\label{Cor_NormImp}  Assume hypothesis {\bf (H5)} and let   $\Psi$ be locally Lipschitz continuous.  
\begin{itemize}
\item[{\rm (i)}]    If a [local]   $\Psi$-minimizer  $\bar z$  for  $(P^*_e)$ is   a normal  $\Psi$-extremal,   then  [at $\bar z$] there is no [local]  infimum gap.
\item[{\rm (ii)}] If $\bar z$ is a  [local]   $\Psi$-minimizer    for  $(P^*_e)$, at which condition {\bf (H6)} is verified  and  it is  a nondegenerate normal $\Psi$-extremal,   then  [at $\bar z$] there is no [local]  infimum gap.
  \end{itemize}
  
\end{theo}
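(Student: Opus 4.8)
The plan is to read off Theorem \ref{Cor_NormImp} as the contrapositive of Theorem \ref{Th12imp}, in exactly the way Theorems \ref{Cor_Norm} and \ref{Cor_Norm2} were obtained from Theorems \ref{Th1} and \ref{Th2}. The starting observation is that a (local) $\Psi$-minimizer $\bar z$ for $(P^*_e)$ is, in particular, a feasible extended process for the impulsive extension, so that Theorem \ref{Th12imp} applies to it verbatim. Since the hypothesis of each item already asserts that $\bar z$ is a normal (respectively nondegenerate normal) $\Psi$-extremal, the extremal structure is supplied by assumption and no separate appeal to a maximum principle is needed to produce it.

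For item (i) I would argue by contradiction. Assume \textbf{(H5)} together with the hypotheses of (i), and suppose that at $\bar z$ there is a local infimum gap. Then the first assertion of Theorem \ref{Th12imp} yields that $\bar z$ is an abnormal extremal, i.e. that there exists a choice of multipliers $(p_0,p,\pi,\gamma,\mu,m_0,m)$ as in Definition \ref{extremalImp}, satisfying the nontriviality condition \eqref{fe1imp}, with $\gamma=0$. This is incompatible with $\bar z$ being a normal $\Psi$-extremal, since normality requires $\gamma>0$ for \emph{every} admissible set of multipliers. The contradiction shows that no local infimum gap can occur at $\bar z$.

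Item (ii) is handled identically, one level up in the nondegenerate hierarchy. Assuming \textbf{(H5)}, \textbf{(H6)} and the hypotheses of (ii), a local infimum gap at $\bar z$ would, by the second assertion of Theorem \ref{Th12imp}, furnish a nondegenerate multiplier (one satisfying the strengthened nontriviality \eqref{nondegFTimp}) with $\gamma=0$, contradicting the assumed nondegenerate normality of $\bar z$. Hence again there is no local infimum gap. To pass from the local to the global (non-bracketed) statement, I would note that if $\bar z$ is a global $\Psi$-minimizer and a global infimum gap were present, the value of $\Psi$ at $\bar z$ would lie strictly below the infimum of $\Psi$ over all feasible strict sense processes, and a fortiori below that infimum restricted to any $d_\infty$-neighborhood of $\bar z$; this is precisely a local infimum gap at $\bar z$, so the global case reduces to the local one already settled.

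I do not expect a genuine analytic obstacle here: all the difficulty is concentrated in Theorem \ref{Th12imp} (and, through it, in Theorems \ref{Th1} and \ref{Th2}). The only point deserving care is purely formal, namely to confirm that the family of multipliers entering the definition of \emph{normal} (respectively \emph{nondegenerate normal}) $\Psi$-extremal in Definition \ref{extremalImp} is literally the same family whose existence with $\gamma=0$ is guaranteed by the \emph{abnormal} (respectively \emph{nondegenerate abnormal}) conclusion of Theorem \ref{Th12imp}. Once this coincidence is recorded, the existence of a $\gamma=0$ multiplier genuinely negates normality, and the contrapositive argument closes.
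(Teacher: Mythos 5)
Your proposal is correct and coincides with the paper's intended argument: Theorem \ref{Cor_NormImp} is stated as an immediate corollary of Theorem \ref{Th12imp}, obtained by exactly the contrapositive reading you describe (a gap would force an abnormal, resp.\ nondegenerate abnormal, multiplier with $\gamma=0$, contradicting the assumed normality, resp.\ nondegenerate normality), and your reduction of the global statement to the local one via the minimizer property matches how the fixed end-time analogues (Theorems \ref{Cor_Norm}, \ref{Cor_Norm2}) are derived.
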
 
 
\begin{theo}\label{Cor_ContrImp}   Assume hypothesis {\bf (H5)}. Then, either
\begin{itemize}
\item[{\rm (i)}] $\bar z$ is not isolated in $\Gamma^*$; or
\item[{\rm (ii)}] $\bar z$ is an abnormal extremal [a nondegenerate abnormal extremal,  if condition {\bf (H6)} is verified].
  \end{itemize}
\end{theo}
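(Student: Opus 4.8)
The plan is to argue by contradiction, reducing the statement to Theorem \ref{Th12imp} in exactly the way Theorems \ref{Cor_Contr} and \ref{Cor_Contr2} were deduced from Theorems \ref{Th1} and \ref{Th2}. Throughout, $\bar z:=(\bar S, \bar\w^0,\bar\w,\bar\alpha, \bar{y}^0,\bar y,\bar\nu)$ denotes the feasible process for $(P^*)$ under consideration, as in Theorem \ref{Th12imp}. First I would suppose that both alternatives fail: namely, that $\bar z$ is isolated in $\Gamma^*$ while, simultaneously, $\bar z$ is \emph{not} an abnormal extremal (respectively, under the additional hypothesis {\bf (H6)}, not a nondegenerate abnormal extremal).

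Next I would invoke the equivalence, recorded for free end-time problems in Section \ref{SFT} just before Definition \ref{DnormalFT}, that \emph{at $\bar z$ there is a local infimum gap if and only if $\bar z$ is isolated}. Since, by construction, the notions of isolated process and of local infimum gap for the impulsive extension $(P^*_e)$ are merely the specializations of the corresponding free end-time notions to the control sets $W$, $V$ and the control-polynomial dynamics $\F$ fixed in this section, this equivalence applies verbatim. Hence the assumption that $\bar z$ is isolated already yields that at $\bar z$ there is a local infimum gap. I would then close the argument by applying Theorem \ref{Th12imp}: under {\bf (H5)} alone, the presence of a local infimum gap at $\bar z$ forces $\bar z$ to be an abnormal extremal, while under {\bf (H5)}--{\bf (H6)} it forces $\bar z$ to be a nondegenerate abnormal extremal. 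Either conclusion contradicts the standing assumption that alternative (ii) fails, completing the proof.

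As this shows, the corollary is a purely logical consequence of Theorem \ref{Th12imp} together with the isolated/gap equivalence, so there is no genuinely hard computational step. The only point deserving care -- and hence the candidate for the main obstacle -- is making sure the isolation hypothesis is read consistently across the reduction: isolation in $\Gamma^*$ concerns feasible \emph{strict sense} processes of $(P^*)$, which via the embedding $\mathcal{I}$ of Lemma \ref{EmbIMP} are precisely the images of the feasible original processes of $(\mathcal{P}^*)$. Since Theorem \ref{Th12imp} is itself already established by checking that {\bf (H5)}--{\bf (H6)} imply the hypotheses {\bf (H1)}--{\bf (H2)$^*$}--{\bf (H3)$^*$}--{\bf (H4)} required by Theorem \ref{Th12FT}, no further verification of the structural hypotheses is needed at this stage, and the two displayed alternatives are genuinely exhaustive.
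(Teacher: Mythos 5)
Your proposal is correct and follows essentially the same route as the paper: the result is stated there as an immediate corollary of Theorem \ref{Th12imp}, obtained by the same contradiction argument used to derive Theorem \ref{Cor_Contr} from Theorem \ref{Th1} (assume $\bar z$ isolated and not (nondegenerate) abnormal, invoke the isolated/local-gap equivalence, and apply Theorem \ref{Th12imp}). Your additional remark that isolation in $\Gamma^*$ refers to feasible strict sense processes, identified with original processes via the embedding of Lemma \ref{EmbIMP}, is consistent with the paper's setup and requires no further justification.
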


All the above results can be easily extended  to the case when there is a local infimum gap at some $\bar z$, which is a process of the {\em relaxed} problem associated to  $(P^*_e)$.  We limit ourselves to  establish   gap-abnormality relations for the original problem with respect to its  impulsive extension  $(P^*_e)$,      to illustrate how the results of the present paper apply to general, possibly  not convex,  extensions.

\vsm
In the following example  there is no infimum gap  but this fact cannot be deduced from the normality criterion in Theorem \ref{Cor_NormImp},(i), since the extended minimizer is abnormal. Instead, the absence of  gap is detected by the Theorem \ref{Cor_NormImp},(ii),  as the minimizer is nondegenerate normal.
 \begin{exa}\label{exa}    {\rm Consider the problem
\bel{strictex3}
\left\{
\begin{array}{l}
\qquad \qquad\mbox{minimize }\  -x(1)
\\ 
\mbox{over } (x,v, u)  \in W^{1,1}([0,1];\R^{3}\times\R)\times L^1([0,1];\R^2)  \text{ satisfying} \\
( \dot x, \dot v)(t) =\Big( f(x(t))+g_1(x(t))\, u^1 (t) +g_2(x(t))\, u^2(t) , \,\,  \left|u(t)\right| \Big) \\ 
(x,v)(0)=((1,0,0), 0), \\ 
x(t)\in\Omega \  \ \forall t\in[0,1], \  v(1)   \leq 2 ,\
   x(1)\in\T ,
\end{array}
\right.
\eeq
in which 
$
\Omega:=[-1,1]^3$, $
 \T :=  [-1,0]\times[0,1]^2$,
 and
$$
\ds g_1(x):=\left(\begin{array}{l}  1 \\   0\\0\end{array}\right),   \quad \ds g_2(x):=\left(\begin{array}{l} \ \ 0 \\ -1\\-x^1\end{array}\right) , \quad  \ds f(x):=\left(\begin{array}{l} \ 0 \\  x^2x^3\\ \ 0\end{array}\right)  \quad \forall  x\in \R^3\,.
$$
Here,  $W=\{(\w^0,w)\in\R_{\ge0}\times\R^2: \ \ w^0+|w|=1\}$,  $V=\{(\w^0,w) \in W: \ \ w^0>0\}$, and the associated extended  problem is
\[
\left\{
\begin{array}{l}
\qquad\qquad \mbox{minimize } \  -y^1(S)
\\ 
\mbox{over } S>0, \ (y^0, y,\nu, \w^0,\w^1,\w^2) \in W^{1,1}([0,S];\R\times \R^3\times\R)\times\W(S)   \quad
\mbox{satisfying }\\
(\dot y^0, \dot y, \dot \nu)(s) = \Big( \w^0(s), \,\, f(y(s))\w^0(s)+g_1(y(s))\,\w^1(s) +g_2(y(s))\,\w^2(s), \,\, \left|\w(s)\right| \Big) \\
 (y^0, y, \nu)(0)=(0, (1,0,0),0) \\
y(s)\in\Omega  \ \ \forall s\in[0,S],
 \  \ (y^0(S), y(S),\nu(S))\in\{1\}\times \T\times]-\infty,2] .
\end{array}
\right.
\]
As it is easy to see, an extended   minimizer  is  given by the following feasible extended   process $\bar z:=(\bar S, \bar\w^0 ,\bar\w,  \bar y^0 ,\bar y , \bar\nu )$, where
\[
\begin{array}{c}
\bar S=2, \qquad (\bar\w^0,\bar\w)=(\bar\w^0,\bar\w^1,\bar\w^2)  =(1,0,0)\chi_{_{[0,1]}}  +(0,-1,0)\chi_{_{]1,2]}}, \\
(\bar y^0,\bar y,\bar\nu) =(\bar y^0,\bar y^1,\bar y^2,\bar y^3,\bar\nu)  =(s,1,0,0,0 )\chi_{[0,1]} +(1,2-s,0,0,s-1)\chi_{[1,2]} \,.
\end{array}
\]

\noindent From  the maximum principle \cite[Thm. 1.1]{FM120}, $\bar z$ is a $\Psi$-extremal  accordingly to Definition \ref{extremalImp}. Hence,  there exist a set of multipliers $(p_0 ,p,\pi, \gamma,\mu)$ and functions $(m_0,m)$  with  $\pi=0$,  since $\nabla_v\Psi\equiv0$ and  $\bar\nu(2)=1<2$,   $m_0\equiv 0$,  as the state constraint does not depend on time, and $\mu([0,2])=\mu([0,1])$. Moreover, for every $s\in[0,1]$ the fact that $\bar y(s)\in\Omega$ is equivalent to $h(\bar y(s))\le0$, with $h(x^1,x^2,x^3):=x^1-1$, so that the  condition  $m(s)\in\partial^>_xh(\bar y(0))$ $\mu$-a.e.  yields  $m(s)=(1,0,0)$ $\mu$-a.e. in $[0,1]$.  By the adjoint equation,  it follows that  the path $(p_0,p)=(p_0,p_1,p_2,p_3)\equiv(\bar p_0,\bar p_1,\bar p_2,\bar p_3)$ is constant. 
From  the transversality condition 
\[
-(q_0,q_1,q_2,q_3)(2)\in\gamma\{(0,-1,0,0)\}+\R\times N_{\T}(0,0,0), 
\]
where $q_0\equiv\bar p_0$,   and $q(s)=(\bar p_1+\mu([0,1]), \bar p_2,\bar p_3)$ for all $s\in]1,2]$, we derive that $\bar p_0$, $\bar p_1\in\R$, $\bar p_2$, $\bar p_3\ge0$, and $q_1(2)=\bar p_1+\mu([0,1])=\gamma -\alpha_1 $ with $\alpha_1\ge0$.  The maximality condition in $]1,2]$ implies that $\bar p_2=\bar p_3=0$. In particular, from  the relations 
\[
\begin{array}{l}
 \ds\max_{w^1\in[-1,1]}\left\{ q_1(s)w^1\right\}\chi_{[0,1]}(s)=\bar p_0\chi_{[0,1]}(s) =0, \quad -q_1(s)\chi_{]1,2]}(s)=0,
 \end{array}
\]
  we also deduce that $\bar p_0=0$,   $q_1(s)=\bar p_1+\mu([0,s[)=0$ for a.e. $s\in[0,1[$,  and $q_1(s)=\bar p_1+\mu([0,1])=\gamma -\alpha_1=0 $ for every $s\in]1,2]$. In particular, $q(s)=0$ for a.e. $s\in[0,2]$,  $\mu([0,s[)=-\bar p_1$ for a.e. $s\in[0,1]$ implies that ($\bar p_1\le0$ and) $\mu=-\bar p_1\mu(\{0\})$, while the last relation yields that $\gamma=\alpha_1$. 

\vsm
It is immediate to see that the set of degenerate multipliers $(p_0,p,\gamma,\mu)$ with $p_0=p_2=p_3=0$, $p_1=-1$,   $\mu=\delta_{\{0\}}$, and $\gamma=0$   meets all the conditions of  the maximum principle. So,   $\bar z$ is an abnormal extremal.
However, since  $\bar w^0>0$ for a.e. $s\in[0,1]$ and the control $(\hat \w^0,\hat \w)=(\hat \w^0,\hat \w^1,\hat\w^2)\equiv(0, -1,0)$
verifies \eqref{cqdsimple}, from Lemma \ref{LH6} it follows that condition {\bf (H6)} is satisfied. Therefore,  in view of Theorem   \ref{Cor_NormImp}, (ii),  to deduce that  there is no infimum gap  it is enough to observe that $\bar z$ is nondegenerate normal, namely, that $\gamma\ne0$ for all sets of multipliers as above, which in addition verify  
$$
\mu(]0,2])+\|q\|_{L^\infty}+\gamma\ne0.
$$
This is true, since   the previous calculations imply that $\|q\|_{L^\infty}=0$ and $\mu(]0,2])=0$.}
\end{exa}

\section{Proofs of Theorems    \ref{Th1}, \ref{Th2}}\label{S4}
Preliminarily, let us observe that, since  the proofs  involve only relaxed and extended processes with   trajectories  close to  the reference   trajectory $(\bar \xi,\bar y)$ and the controls assume values in   compact sets,  using standard  cut-off techniques   we can assume  that  all hypotheses  {\bf (H2)}-{\bf (H3)$'$} are satisfied not only in $\Sigma_\theta$, but in the whole space $\R^{1+n}$. Hence, for any   $(\underline{\w},\underline{\alpha},\lambda)\in  \W^{1+n}\times\A^{1+n}\times \Lambda_n$   there is a unique solution $(\xi,y)[\underline{\w},\underline{\alpha},\lambda]$ to \eqref{Er} defined on $[0,S]$. Similarly, for any $(\w,\alpha)\in\W\times\A$,  we will write $y[\w,\alpha]$ to denote the corresponding solution to \eqref{E}.

\subsection{Proof of Theorem  \ref{Th2}}
  The proof is divided into several steps in which successive sequences of optimization problems are introduced that have as eligible controls only   strict sense controls, and costs that measure how much a process violates the constraints. Using the Ekeland Principle, minimizers are then built for these problems, which converge to the initial isolated process. Furthermore,  applying a   maximum principle  to these approximate problems with reference to the above mentioned minimizers, we obtain in the limit a set of multipliers with $\gamma=0$  {\em and verifying  the strengthened non-triviality conditions \eqref{nondeg}}   for the relaxed problem  with reference to the isolated process $(\underline{\bar\w},\underline{\bar\alpha},{\bar\lambda},\bar y)$.

\vsm
{\it Step 1.}  Define the function $\Phi:\R^{n+1}\to\R$,  given by
$$
\Phi \left(x,z\right) := d_{\T}(x)\vee z  	
$$
and   for any $y\in W^{1,1}([0,  S];\R^n)$,  introduce  the payoff
$$
{\mathcal J}(y):= \Phi \Big(y(S),\,\max_{s \in [0,S]} h(s,y(s))\Big).
$$
Fix a sequence $(\varepsilon_i)_i$ such that $\varepsilon_i  \downarrow 0$.  Let $(\tilde w_i,\tilde\alpha_i, \tilde\lambda_i)_i$ be a control sequence as in hypothesis {\bf (H4)},  such that, eventually passing to a subsequence, for every $i$,  the  corresponding trajectory $( \tilde \xi_i,\tilde y_i)$ of \eqref{Er} verifies
\bel{stimatilde}
\| ( \tilde \xi_i,\tilde y_i) - (\bar\xi,\bar y) \|_{L^{\infty} } \leq \varepsilon_i.
\eeq
For every $i$, let $\rho_i\ge0$ verify
$$
\rho_i^4=\sup\left\{\begin{array}{l} {\mathcal J}(y): \ \  ( w, \alpha,y)\in\Gamma, \ \ 
  \|y - \bar y\|_{L^{\infty} } \le 2\varepsilon_i 
\end{array}\right\}.
$$
By the Lipschitz continuity of $\Phi$, it follows that $\lim_{i\to+\infty}\rho_i^4=0$. Moreover, $\rho_i>0$ for every $i$ large enough, since $\bar z$ is an isolated   process by Proposition \ref{iso}. 

 In the following, as it is clearly not restrictive, we will always assume that the properties valid from a certain index onwards, apply to each index   $i\in\N$. 
By well-known continuity properties of the input-output map $(\w,\alpha)\mapsto y[\w,\alpha]$, for every $\varepsilon_i$ there exists $\delta_i>0$ such that, if $\|\w - \tilde \w_i\|_{L^1 } \leq \delta_i$, then $\| y[\w, \tilde\alpha_i] - \tilde y_i \|_{L^{\infty} } \leq \varepsilon_i$. According to hypothesis {\bf (H1)} and Remark \ref{Rhyp12}, for any $i$ there exist an element  of the sequence $(V_j)_j$, which we denote by $V_{\delta_i}$,  and some $ \mathring \w_i \in \V_{\delta_i}:=L^1([0,S];V_{\delta_i})$ such that  $\|  \mathring \w_i -\tilde \w_i \|_{L^1} \leq \delta_i$. In particular, if we define
$$
\check \w_i(s):= 
\begin{cases}
\tilde \w_i (s)  \ \  \text{a.e. }s\in[0,\rho_i] \\
 \mathring \w_i(s)  \ \  \text{a.e. }s\in]\rho_i,S],
\end{cases}
(\check \alpha_i(s),\check\lambda_i(s)) := (\tilde \alpha_i(s),\tilde\lambda_i(s))  \ \   \text{a.e. }s\in [0,S],
$$
and $
( \check \xi_i, \check y_i) := (\xi,y)[\check 	\w_i,\dots,\check \w_i, \check\alpha_i,\dots,\check\alpha_i, \check{\lambda}_i],
$
then $\| \check \w_i -\tilde \w_i \|_{L^1} \leq \delta_i$ and    $\check y_i$ is a  strict sense trajectory such that
$
\| (\check\xi_i,\check y_i) - (\tilde\xi_i,\tilde y_i) \|_{L^{\infty}} \leq \varepsilon_i. 
$
From \eqref{stimatilde} it follows  that
\bel{conv1}
\| (\check\xi_i,\check y_i) - (\bar\xi,\bar y) \|_{L^{\infty}} \leq 2\varepsilon_i. 
\eeq
Hence, by the very definition of $\rho_i$ we deduce that for any $i$ the process $\check z_i:=(\check \w_i,\check\alpha_i,\check{\lambda}_i,\check\eta_i, \check\xi_i,\check y_i)$, where $\check\eta_i\equiv 0$,  is a $\rho_i^4$-minimizer for the  optimal control problem:
\begin{equation*}
\big(\hat P_i\big)
\begin{cases}
\qquad\qquad\qquad\qquad\qquad\text{Minimize } \,\,\,{\mathcal J}(y) \\
 \text{over the set of control }(\w,\alpha,\lambda,\eta)\in \V_{\delta_i} \times \A \times \Lambda_n^1 \times L^1([0,S]; \{0,1\}), \\
 \text{and trajectories }  \ (\xi,y) \in W^{1,1}([0, S];\R^{1+n} \times\R^{n}), \text{ satisfying} \\
 \dot \xi(s) = \lambda(s) \qquad \text{ a.e. } s \in [0,S]  \\
 \dot y(s) =  \F(s,y, \tilde \w_i, \tilde\alpha_i) + \eta(s) [\F(s,y, \hat \w_i, \hat\alpha_i)-\F(s,y, \tilde \w_i, \tilde\alpha_i)]\,\, \text{a.e.}\, s \in [0, \rho_i]   \\
\dot y(s) = \F(s,y(s), \w(s), \alpha(s))    \quad \text{ a.e. } s \in  ]\rho_i,S]   \\
 (\xi,y)(0)=(0,\xb),
\end{cases}
\end{equation*}
where $(\hat\w_i,\hat\alpha_i)$  is as in hypothesis {\bf (H4)}. 
We  call an element $(\w,\alpha,\lambda,\eta, \xi, y)$ verifying the constraints in $(\hat P_i)$ a {\em  process for problem $(\hat P_i)$} and use ${\Gamma}_i$ to denote the set of such processes. By introducing, for every  $(\w',\alpha',\lambda',\eta', \xi', y')$, $(\w,\alpha,\lambda,\eta, \xi, y)\in\Gamma_i$,  the distance 
\bel{bfd}
\begin{array}{l}
\mathbf{d}((\w',\alpha',\lambda',\eta', \xi', y'),(\w,\alpha,\lambda,\eta, \xi, y)) \\
 \qquad\qquad:=\|\w'-\w\|_{L^1([0,S])}+\ell\{s\in[0,S]: \ (\alpha',\lambda',\eta')(s)\ne(\alpha,\lambda,\eta)(s)\},
\end{array}
\eeq
 we can make $({\Gamma}_i,\mathbf{d})$ a complete metric space.  
Then,  from Ekeland's Principle it follows that there exists a process $z_i:=(\w_i,\alpha_i, \lambda_i,\eta_i,  \xi_i, y_i)\in{\Gamma}_i$,   which is a minimizer of the optimization problem
 \begin{equation*}
\left(P_i\right)
\begin{cases}
 \text{Minimize} \,\,\,\, {\mathcal J}(y) +\rho^2_i \, 
   \int_{0}^{S}  \left[ |\w(s) - \w_i(s)|  +\ell_i(s,\alpha(s), \lambda(s), \eta(s)) \right] \,ds   \\
 \text{over } \ (\w,\alpha,\lambda,\eta, \xi, y)\in\Gamma_i,
\end{cases}
\end{equation*}
where 
$
\ell_i(s,a, \lambda, \eta):= \chi_{\{(a,\lambda,\eta) \neq (\alpha_i(s),\lambda_i(s), \eta_i(s))\} }
$
for any $(s,a, \lambda, \eta)\in[0,S] \times A\times\Lambda_n^1\times\{0,1\}$,
and verifies 
\begin{equation} \label{ek}
\mathbf{d}\big((\w_i,\alpha_i, \lambda_i,\eta_i,  \xi_i, y_i),(\check \w_i,\check\alpha_i,\check{\lambda}_i,\check\eta_i, \check\xi_i,\check y_i) \big)
 \leq \rho^2_i.  
\end{equation}
Thus, by \eqref{conv1} and the continuity of the input-output map associated to the control system \eqref{Er},  it follows that, eventually passing to a subsequence, as $i \to +\infty$,  
 \begin{equation} \label{conv_y}
\left\| \left(\xi_i, y_i \right)-\left(\bar\xi, \bar y \right)  \right\|_{L^{\infty}} \to 0, \quad
\big(\dot \xi_i , \dot  y_i\big) \rightharpoonup \big(\dot{\bar\xi}, \dot{\bar y} \big) \quad \text{ weakly in $L^1$.}
\end{equation}
Furthermore,  hypothesis {\bf (H4)} and \eqref{ek} imply that, for every $i$, there exist some nonempty subset $E_i\subseteq \tilde E_i\subseteq[0,S]$  and some $r_i\in L^1([0,S];\R_{\ge0})$ with $ r_i\ge \tilde r_i$ ($\tilde E_i$, $\tilde r_i$  as in  {\bf (H4)}) such that, as $i\to+\infty$,  $\ell(E_i)\to S$, $\|r_i\|_{L^1}\to0$, and
\begin{equation} \label{conv_mis}
 (\w_i,\alpha_i,\lambda_i)(s)\in \bigcup_{k=0}^n\{(\bar\w^k(s),\bar\alpha^k(s),e^k)\}+(r_i(s),0,0)\B  \quad\text{for a.e. $s\in  E_i$}. 
 \end{equation}
From \eqref{conv_y} and the fact that  $\bar z$ is  isolated, it follows that 
   ${\mathcal J}( y_i)>0$  for all $i$, namely, at least one of the following  inequalities holds true: \footnote{Notice that, to any process $(\w,\alpha,\lambda,\eta, \xi, y)\in\Gamma_i$ it corresponds a strict sense process $(\breve\w,\breve\alpha,\breve y)$, where $\breve y\equiv y$ and $(\breve\w,\breve\alpha)(s)=(\tilde \w_i,\tilde\alpha_i)(s)+\eta(s)(\hat \w_i-\tilde \w_i,\hat \alpha_i-\tilde \alpha_i)(s)$ a.e. $s\in[0,\rho_i]$, $(\breve\w,\breve\alpha)(s)= (\w,\alpha)(s)$ a.e. $s\in]\rho_i,S]$.}
\begin{equation} \label{violazione} 
d_{\T}(y_i(S))>0,  \qquad c_i:=\max_{s \in [0,S]} \,  h(s,y_i(s))  >0.
\end{equation}
\vsm
{\it Step 2.}    For each $i\in\N$, set
$$
\tilde h(s,x,c) := h(s,x) - c \qquad \forall (s,x,c)\in\R^{1+n+1}.
$$
The process   $(z_i,c_i)=(\w_i,\alpha_i, \lambda_i, \eta_i, \xi_i, y_i,c_i)$ turns out to be a minimizer for 
\begin{equation*}
\left(Q_i\right)
\begin{cases}
\text{Minimize} \,\,\,\Phi(y(S),c(S)) + \rho^2_i\,
 \int_{0}^{S}  \left[ |\w(s) - \w_i(s)|  +\ell_i(s,\alpha(s), \lambda(s), \eta(s)) \right] \,ds  \\
 \text{over } \ (\w,\alpha,\lambda,\eta,\xi, y)\in\Gamma_i, \ c\in W^{1,1}([0,S];\R), \ \text{verifying} \\
\dot c(s)=0, \qquad \tilde h(s, y(s),c(s))\le 0 \quad \forall s\in[0,S].
\end{cases}
\end{equation*}
Passing eventually to a subsequence,   we may suppose that 
\vsm
\qquad either  (a)   {\em `$c_i > 0$ for each $i \in \N$'}, or (b)  {\em`$c_i \leq0$ for each $i \in \N$'.}
\vsm

\noindent Case (a).    Preliminarily, we show  that, for every $i$, one has $h(s,y_i(s))< c_i$ for all $s\in[0,\rho_i]$.
  This result is a  straightforward consequence of the following lemma:  
  \begin{lemma}\label{l2}  For every $i\in\N$, one has $h(s,y_i(s))\le 0$ for all $s\in[0,\rho_i]$.
  \end{lemma}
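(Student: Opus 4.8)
The plan is to compare the trajectory $y_i$ with the approximating trajectory $\tilde y_i$ supplied by hypothesis {\bf (H4)}, exploiting that $\tilde y_i$ already satisfies the state constraint while the $\eta_i$-perturbation connecting the two trajectories strictly decreases $h$. Since $\rho_i\to0$, for every $i$ large enough one has $\rho_i\le\bar s$, so that \eqref{htilde} gives $h(s,\tilde y_i(s))\le0$ on $[0,\rho_i]$; hence it suffices to show that $h(s,y_i(s))-h(s,\tilde y_i(s))\le0$ for each $s\in[0,\rho_i]$. Setting $w(s):=y_i(s)-\tilde y_i(s)$ (so that $w(0)=0$) and applying the Lebourg Mean Value Theorem \cite[Th.~4.5.3]{OptV} to $x\mapsto h(s,x)$, I would produce, for each $s$, a point $\xi_s$ on the segment joining $\tilde y_i(s)$ and $y_i(s)$ together with a vector $\zeta_s\in\partial_x^c h(s,\xi_s)$ such that
$$
h(s,y_i(s))-h(s,\tilde y_i(s))=\zeta_s\cdot w(s).
$$
Because $\rho_i$ is small and $y_i,\tilde y_i\to\bar y$ with $\bar y(0)=\xb$, the point $(s,\xi_s)$ eventually lies in the neighborhood of $(0,\xb)$ on which the constraint qualification holds, and by the inclusion $\partial_x^c h\subseteq\{\zeta:\exists\,\zeta_0,\ (\zeta_0,\zeta)\in\partial^c h\}$ recalled in Remark \ref{RH4}(4), the integral estimate \eqref{cqd2} is available for $\zeta_s$.

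The second step decomposes $w(s)=\int_0^s\dot w(\sigma)\,d\sigma$ using the dynamics of $(\hat P_i)$ on $[0,\rho_i]$, namely
$$
\dot w(s)=\big[\F(s,y_i,\tilde\w_i,\tilde\alpha_i)-\F(s,\tilde y_i,\tilde\w_i,\tilde\alpha_i)\big]+\eta_i(s)\big[\F(s,y_i,\hat\w_i,\hat\alpha_i)-\F(s,y_i,\tilde\w_i,\tilde\alpha_i)\big].
$$
Pairing with $\zeta_s$ and integrating, I would bound the perturbation part by the constraint qualification \eqref{cqd2} (taking $y=y_i$, $\eta=\eta_i$, $\zeta=\zeta_s$), obtaining a contribution $\le-\delta\,\ell(s,\eta_i(\cdot))$, while the drift part is controlled, through $|\zeta_s|\le K_{_h}$ and the Lipschitz bound of {\bf (H3)$'$}, by $K_{_h}K_{_\F}\int_0^s|w(\sigma)|\,d\sigma$. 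To close the loop I would estimate $|w|$ from its own ODE: since the perturbation term is bounded by $2K_{_\F}$ and vanishes off $\{\eta_i=1\}$, Gronwall's inequality yields $|w(\sigma)|\le C\,\ell(s,\eta_i(\cdot))$ for $\sigma\le s\le\rho_i$, with $C$ depending only on $K_{_\F}$ and $S$; consequently $\int_0^s|w(\sigma)|\,d\sigma\le C\rho_i\,\ell(s,\eta_i(\cdot))$.

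Combining the two contributions gives
$$
\zeta_s\cdot w(s)\le\big(K_{_h}K_{_\F}C\,\rho_i-\delta\big)\,\ell(s,\eta_i(\cdot)),
$$
and since $\rho_i\to0$, for all $i$ large enough the bracket is nonpositive, whence $\zeta_s\cdot w(s)\le0$ and therefore $h(s,y_i(s))\le h(s,\tilde y_i(s))\le0$, which is the assertion. The delicate points, requiring care rather than a new idea, will be checking that $(s,\xi_s)$ genuinely falls in the region where \eqref{cqd2} is valid (so that the intermediate-point subgradient $\zeta_s$ may legitimately be inserted into the integral inequality), and making the Gronwall bound on $|w|$ \emph{linear} in $\ell(s,\eta_i(\cdot))$ with a constant uniform in $i$; it is precisely this linearity that allows the strict margin $-\delta$ to absorb the drift as $\rho_i\to0$.
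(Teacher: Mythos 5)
Your proof is correct and follows essentially the same route as the paper's: reduce to $h(s,y_i(s))\le h(s,\tilde y_i(s))$ via \eqref{htilde}, apply the Lebourg Mean Value Theorem to obtain an intermediate-point Clarke subgradient, split the increment $y_i-\tilde y_i$ into the drift and the $\eta_i$-perturbation, control the former by a Gronwall bound linear in $\ell(s,\eta_i(\cdot))$ and the latter by the integral constraint qualification \eqref{cqd2}, and absorb the drift into the margin $-\delta$ as $\rho_i\downarrow0$. The two points you flag as delicate (locating the intermediate point in the validity region of \eqref{cqd2}, and the uniform linear Gronwall constant) are exactly the ones the paper handles, via the footnote on $\xb+sK_{_\F}\B$ and the estimate \eqref{estyi}.
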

  \begin{proof} From  a standard application of the Gronwall's Lemma one can  deduce  that
there is $\bar C>0$ such that, for every $i$, one has
\bel{estyi}
  |y_i(s)-\tilde y_i(s)|\le \bar C\,\ell(s, \eta_i(\cdot))  \qquad\forall s\in]0,\rho_i],
  \eeq
 where the nondecreasing map $s \mapsto \ell(s,\eta_i(\cdot))$ is as  in \eqref{ell}. 
Fix now $i\in\N$.   By the Lebourgh Mean Value Theorem \cite[Th. 4.5.3]{OptV},  for every $s\in[0,\rho_i]$  there exists $(\zeta_{0_i}^s,\zeta_{_i}^s) \in\partial^c h(s,x_i(s))$ for some $x_i(s)$ in the segment $[y_i(s)\,, \,\tilde y_i(s)]\subseteq\R^{n}$, such that  
 \footnote{Notice that by the boundedness of the dynamics, both $\tilde y_i(s)$ and $y_i(s)$ lay on $\xb + s K_{_\F} \B$. Hence, for $i$ sufficiently large, $s \in [0,\varepsilon]$ and $x_i(s) \in \xb + \varepsilon \B$, where $\varepsilon>0$ is as in Remark \ref{RH4},(4).}
$$
\begin{array}{l}
 h(s,y_i(s))-h(s,\tilde y_i(s)) = \zeta_i^s\cdot(y_i(s)-\tilde y_i(s)) \\
 \,\,\,=\int_{0}^s \zeta_i^s\cdot[ \F(\sigma,y_i,\tilde\w_i,\tilde\alpha_i) -\F(\sigma,\tilde y_i,\tilde\w_i,\tilde\alpha_i)]\,d\sigma \\ \qquad\qquad\qquad+ \int_{0}^s\eta(\sigma)\zeta_i^s\cdot[\F(\sigma,y_i,\hat\w_i,\hat\alpha_i)-     \F(\sigma,y_i,\tilde\w_i,\tilde\alpha_i)] \,d\sigma \\
 \,\,\, \le \int_{0}^s \bar C K_{_h} K_{_\F}\,\ell (\sigma,\eta_i(\cdot) ) d\sigma -\delta \,\ell(s, \eta_i(\cdot)) \le  \ell(s,\eta_i(\cdot))\left( -\delta+ \bar CK_{_h}K_{_\F}\,s\right) \le0, 
\end{array}
$$
where  the last relations  follow from  \eqref{cqd2},    \eqref{estyi}, and the fact that $s\leq\rho_i\downarrow 0$. Finally, condition (\ref{htilde}) implies the thesis.
 \end{proof} 

Our aim is now to  apply the Pontryagin Maximum Principle to problem $(Q_i)$ with reference to the minimizer $(z_i,c_i)$, for which, thanks to Lemma \ref{l2}, the constraint is inactive on $[0,\rho_i]$.  
By standard arguments (see the proof of \cite[Th. 2.2]{FM120}) we deduce that
$
\partial_{t,x,c}^{^{>}} \tilde h(s,y_i(s),c_i) = \partial_{t,x}^{^{>}} h(s,y_i(s)) \times \{ -1\},
$
and that, if
 $(\beta_{y_i} ,\beta_{c_i})\in \partial \Phi(y_i(S), c_i(S) )$,
 then there is some $\sigma_i^1, \sigma_i^2\geq 0$ with $\sigma_i^1+\sigma_i^2=1$, such that
$
\beta_{y_i}\in\sigma_i^1 \, \left(\partial d_{\T}( y_i(S))\cap \partial\B_{n}\right),
$
$
\beta_{c_i}=\sigma_i^2,
$
and $\sigma_i^k=0$, $k\in\{1,2\}$,  when the maximum in $d_{\T}(y_i(S))\vee c_i(S)$ is strictly greater than the $k$-th term in the maximization.
Thus,  the  Maximum Principle  \cite[Th. 9.3.1]{OptV} yields the existence of some multipliers $(p_i,\pi_i) \in W^{1,1}([0, S]; \R^{n+1})$  associated with  $( y_i, c_i)$,  $\mu_i \in NBV^{+}([0,S]; \R)$, $\gamma_i\geq0$, $\sigma^1_i$, $\sigma_i^2\ge0$ with $\sum_{k=1}^2\sigma_i^k=1$, and Borel-measurable, $\mu_i$-integrable functions $m_i: [0,S] \to \R^n$, such that 
\begin{itemize}
\item[(i)$'$] $\| p_i \|_{L^{\infty}}+ \| \mu_i \|_{TV} + \gamma_i + \| \pi_i \|_{L^{\infty}}  =1$;

\item[(ii)$'$]  $-\dot p_i (s) \in {\rm co} \text{ } \partial_{x} \Big\{ q_i(s) \cdot \F( s,y_i,\w_i,\alpha_i)(s) \Big\}$ for a.e.  $s \in [\rho_i, S]$,  \\
and $\dot \pi_i(s) =0$ for a.e.  $s \in [0, S]$;

\item[(iii)$'$]  $-q_i(S)  \in \gamma_i \,\sigma_i^1\,  \left(\partial   d_{\T}(y_i(S))\cap \partial\B_{n}\right) $,  \ 
 $\pi_i(0)=0$, \  $-\pi_i(S) + \int_{[0,S]} \mu_i(d\sigma)=\gamma_i \sigma_i^2$; 
 
 \item[(iv)$'$] $m_i(s) \in \partial_{x}^{^{>}} h\left(s,  y_i(s)\right)$ \qquad $\mu_i$-a.e. $s \in [0, S]$,

\item[(v)$'$] $spt(\mu_i) \subseteq \{ s \text{ : } h\left( s, y_i(s)\right) - c_i = 0 \}\subset  [\rho_i,S]$,

\item[(vi)$_1'$] $\int_{0}^{\rho_i} \eta_i \,  p_i \cdot \left[\F(s,y_i,\hat\w_i,\hat\alpha_i)- \F(s,y_i,\tilde\w_i,\tilde\alpha_i)\right]  \,ds$ \\
\text{ }\qquad$\geq \int_{0}^{\rho_i} \left\{ (1-\eta_i) \, p_i \cdot \left[\F(s,y_i,\hat\w_i,\hat\alpha_i)- \F(s,y_i,\tilde\w_i,\tilde\alpha_i)\right]   -  2 \gamma_i \rho_i^2\right\} ds$; \footnote{By (v)$'$ it follows that $q_i\equiv p_i$ on $[0,\rho_i]$. Notice also that (vi)$_1'$ holds in a more general form, in fact we can replace $1-\eta_i(\cdot)$ in the right hand side with any measurable function $\eta:[0,\rho_i] \to \{0,1\}$. Furthermore, we assume without loss of generality diam$(W) =1$, since $W$ is supposed to be compact, so that $|\w(s) - \w_i(s)|  +\ell_i(s,\alpha(s), \lambda(s), \eta(s)) \leq 2$ for any $s \in [0,S]$.}
 
\item[(vi)$_2'$] $\int_{\rho_i}^{S}  q_i \cdot \F(s,y_i,\w_i,\alpha_i) ds\ge\int_{\rho_i}^{S} \left\{ q_i \cdot  \F(s,y_i,\w,\alpha) - 2\gamma_i \rho_i^2 \right\}ds$  \\
for all $(\w,\alpha,\lambda,\eta)\in \V_{\delta_i} \times \A \times \Lambda_n^1 \times L^1([0,S];\{0,1\}) $;
\end{itemize}
where
\[
q_i(s) := 
\begin{cases}
  p_i(s) + \int_{[0,s[} m_i(\sigma) \mu_i(d\sigma) \qquad\,\,\, s\in[0, S[, \\
  p_i(S) + \int_{[0, S]} m_i(\sigma) \mu_i(d\sigma) \qquad s=S.
\end{cases}
\]
Observe that, for each $i$,  by (ii)$'$ and (iii)$'$  we derive 
$\| \mu_i \|_{TV}= \int_{[0,S]} \mu_i(ds)=\gamma_i \sigma_i^2$ and $\pi_i\equiv0$. 
 Furthermore,   since $\| m_i \|_{L^{\infty}} \leq K_{_h}$,
then  by (iii)$'$ we get
$$
\gamma_i\sigma_i^1 = \left| q_i(S)  \right| \leq  \| p_i \|_{L^{\infty}}+ K_{_h} \| \mu_i \|_{TV}.
$$ 
By summing up these  estimates and  the non-triviality condition (i)$'$,  we get
$$	
2\| p_i \|_{L^{\infty}}+(2+K_{_h})\| \mu_i \|_{TV} + \geq  \gamma_i(\sigma_i^1+\sigma_i^2-1)  +1= 1.
$$
Hence,    scaling the multipliers, we obtain  
$\| p_i \|_{L^{\infty}}+ \| \mu_i \|_{TV}  = 1$ and $\gamma_i \leq \tilde L:= 2+ K_{_h}$.
\vsm
 
Case (b).  Now, $c_i \leq 0$ for each $i$, so that  \eqref{violazione} implies $d_{\T}(y_i(S))>0$. Thus, the process \linebreak $(\w_i, \alpha_i,\lambda_i,\eta_i, y_i,\xi_i,\hat c_i)$ with $\hat c_i := c_i + \hat\varepsilon$,  for $\hat\varepsilon >0$ suitably small  is still a minimizer of problem $(Q_i)$ and, in addition, it verifies  $h(s,y_i(s)) -\hat c_i <0$  for all  $s\in[0,S]$ (namely, the state constraint is inactive on $[0,S]$). 
  Hence, by applying the Maximum Principle for problem $(Q_i)$ with reference to this minimizer we deduce the existence of multipliers  $(p_i,\pi_i) \in W^{1,1}([0, S]; \R^{n+1})$,  which satisfy conditions (i)$'$--(vi)$'$ with $\pi_i\equiv0$, $\mu_i=0$, $\sigma_i^2=0$, and $\gamma_i > 0$.
In this case,   from  (iii)$'$ we get
$0<\gamma_i  = \left|   q_i(S) \right| =\left|   p_i(S) \right| \leq  \| p_i \|_{L^{\infty}}$,
and,  scaling the multipliers appropriately, we obtain 
$\| p_i \|_{L^{\infty}}= 1$ and $\gamma_i \leq 2$ ($\le \tilde L$ as above). 

\vsm
{\it Step 3.} 
For either the case where   $c_i > 0$ for each $i$ or the case where $c_i \leq0$ for each $i $, passing to the limit  as $i\to+\infty$ for suitable subsequences and arguing as in the proof of   \cite[Thm. 2.2, Step. 4]{FM120},  we can deduce the existence of a set of multipliers $p \in W^{1,1}([0,S]; \R^n)$,  $\mu \in NBV^{+}([0,S]; \R)$ and  a Borel-measurable,   $\mu$-integrable  function $m: [0,S] \to \R^{n}$, such that
\bel{risultati_parziali}
\begin{aligned}
& \|p\|_{L^{\infty}} + \| \mu\|_{TV} =1, \qquad & spt(\mu) \subseteq \{ s\in[0,S] : h(s, \bar y(s))=0   \}, \\
& -q(S) \in N_\T(\bar y(S)) \qquad & m(s) \in \partial_x^> h(s, \bar y(s)) \,\,\text{$\mu$-a.e. $s\in[0,S]$} \\
\end{aligned}
\eeq
where
\[
q(s) := 
\begin{cases}
p(s) + \int_{[0,s[} m(\sigma) \mu(d\sigma) \qquad s\in[0,S[ \\
p(S) + \int_{[0,S]}m(\sigma) \mu(d\sigma) \qquad s=S.
\end{cases}
\]
Furthermore,
\bel{conv_p}
q_i \to q \,\,\, \text{in $L^1$}, \qquad p_i\to p\,\,\, \text{in $L^{\infty}$}, \qquad \dot p_i \rightharpoonup \dot p \,\,\,\text{weakly in $L^1$}.
\eeq


Now, let $\Omega_i:= [\rho_i, S] \setminus E_i$, where $E_i$ is as in \eqref{conv_mis}, so that $\ell(\Omega_i) \to S$. Recalling $\partial_x (q\cdot\F) = q \cdot \partial_x^c \F$, by (ii)$'$, \eqref{conv_mis} and {\bf (H3)},(ii) we deduce, for a.e. $s\in\Omega_i$,
$$
\begin{array}{l}
\ds \big(-\dot p_i, \dot \xi_i,\dot y_i\big)(s)  \in \bigcup_{k=0}^n \Big[ \big({\rm co} \text{ } \partial_{x} \big\{ q_i(s) \cdot \F( s,y_i,\bar\w^k,\bar\alpha^k)(s) \big\} \big) \\
\qquad\qquad\,\, \times \{ ( e^k, \F(s,y_i,\bar\w^k, \bar\alpha^k)(s)   )   \} \Big] + \big((1+K_{_h}) \varphi(r_i(s)) , \,0, \,\varphi(r_i(s)) \big)\B  \\[1.5ex] 
\ds \subseteq \bigcup_{k=0}^n \Big[ \big({\rm co} \text{ } \partial_{x} \big\{ q(s) \cdot \F( s,y_i,\bar\w^k,\bar\alpha^k)(s) \big\} \big) \times \{ ( e^k, \F(s,y_i,\bar\w^k, \bar\alpha^k)(s)    )  \} \Big] \\
\qquad\qquad\,\,  + \big((1+K_{_h}) \varphi(r_i(s)) + K_{_\F} |q_i(s) - q(s)|, \,0, \,\varphi(r_i(s)) \big)\B.
\end{array}
$$
 By the properties of $\varphi(\cdot)$, the compactness of $W$,
   and the  Dominated Convergence Theorem we have $\varphi(r_i) \to 0$ in $L^1$ as $i\to \infty$. Hence, all the hypotheses of the Compactness of Trajectories Theorem \cite[Th. 2.5.3]{OptV} are satisfied, so that we can pass to the limit and get 
\[
\begin{array}{l}
\ds \Big(-\dot p, \dot{\bar\xi},\dot{\bar y}\Big)(s) \in \text{{\rm co}} \Big(  \bigcup_{k=0}^n \big[ \big({\rm co} \text{ } \partial_{x} \big\{ q(s) \cdot \F( s,\bar y,\bar\w^k,\bar\alpha^k)(s) \big\} \big) \\
\qquad\qquad\qquad\qquad\qquad\qquad \times \{ ( e^k, \F(s,\bar y,\bar\w^k, \bar\alpha^k) (s)  )  \}  \big]\Big) \ \ \text{ for a.e. $s\in [0,S]$}. 
\end{array}
\]
By the Caratheodory Representation Theorem, there exists a measurable function $\lambda=(\lambda^0,\dots,\lambda^n)\in\Lambda_n$ such that  
\bel{ad_eq2}
\begin{array}{l}
\ds\Big(-\dot p, \dot{\bar\xi},\dot{\bar y}\Big)(s)  \in \sum_{k=0}^n \lambda^k(s) \Big( {\rm co} \text{ } \partial_{x} \left\{ q(s) \cdot \F( s,\bar y,\bar\w^k,\bar\alpha^k)(s) \right\} \\
\text{ }\qquad\qquad\qquad\qquad\qquad \times \{ ( e^k, \F(s,\bar y,\bar\w^k, \bar\alpha^k) (s)  )   \} \Big) \ \ \text{ for a.e. $s\in [0,S]$}. 
\end{array}
\eeq
But now 
$$
\dot{\bar\xi}(s) = \sum_{k=0}^n \lambda^k(s) e^k = \sum_{k=0}^n \bar\lambda^k(s) e^k \qquad \text{a.e. $s\in[0,S]$.} 
$$ 
Therefore, for every $k=0,\dots,n+1$,  $\lambda^k(s)=\bar\lambda^k (s)$ a.e. $s\in[0,S]$ and \eqref{ad_eq} is proved.

Let us prove \eqref{maxham}. Take $(\w,\alpha) \in \W\times \A$, by {\bf (H1)} and Remark \ref{Rhyp12} there exists a sequence $(v_i)_i \in \V$ such that $v_i \in \V_{\delta_i}$ for any $i$ and $\| \w- v_i \|_{L^1} \leq \delta_i \downarrow 0$. By (vi)$_2'$, we deduce that, for any $i$, one has
\[
\int_0^{S}  q_i(s) \cdot \dot y_i(s) \chi_{[\rho_i,S]}(s) ds\ge\int_{0}^{S} \left\{ q_i(s) \cdot  \F(s,y_i,v_i,\alpha)(s) - 2\gamma_i \rho_i^2 \right\}\chi_{[\rho_i,S]}(s)  ds.
\]
Passing  to the limit and using  \eqref{conv_y}, \eqref{conv_p} in the left hand side and  
 the Dominated Convergence Theorem in the right hand side, we  obtain
\[
\int_0^{S}  q(s) \cdot \dot{\bar y}(s) \,ds\ge\int_{0}^{S} q(s) \cdot  \F(s,\bar y(s),\w(s),\alpha(s))\, ds.
\]
Since this last relation holds for any selector  $(\w,\alpha) \in \W\times\A$, by a measurable selection theorem we can conclude that 
\bel{provvisorio_ham}
q(s) \cdot \dot{\bar y}(s) =  \max_{(w,a) \in W \times A} q(s) \cdot \F(s,\bar y(s), w,a) \qquad \text{a.e. $s\in[0,S]$.}
\eeq
Finally, \eqref{provvisorio_ham} trivially implies \eqref{maxham}. 
Thus $\bar z$ is an abnormal extremal. To prove that it is in fact a nondegenerate  abnormal extremal,  it remains to show that the above multipliers verify the strengthened non-triviality condition
\bel{ntnondeg}
   \| q \|_{L^{\infty}}+ \mu(]0,S])\ne0.
   \eeq
To this aim, assume by contradiction that
$\| q \|_{L^{\infty}}+ \mu(]0,S])=0.$ 
Then, the non-triviality condition \eqref{risultati_parziali} yields that $\mu(\{0\})\ne0$ and $p\equiv-\mu(\{0\})\zeta$ for some $\zeta \in\partial_x^> h(0,\xb)$.  For every $i$, by  the maximality condition (vi)$_1'$ and condition (\ref{cqd2}),  it follows that 
$$
\begin{array}{l}
\ds0\ge  \int_{0}^{\rho_i} (1-2\eta_i)\,  p \cdot \left[\F(s,y_i,\hat\w_i,\hat\alpha_i)- \F(s,y_i,\tilde\w_i,\tilde\alpha_i)\right]   \,ds \\
\ds\qquad +\int_{0}^{\rho_i} \left\{ (1-2\eta_i) \,(p_i-p) \cdot \left[\F(s,y_i,\hat\w_i,\hat\alpha_i)- \F(s,y_i,\tilde\w_i,\tilde\alpha_i)\right] - 2 \gamma_i \rho_i^2\right\}  \,ds \\[1.5ex]
\ds\ \ \ge  \int_0^{\rho_i} p \cdot \left[\F(s,y_i,\hat\w_i,\hat\alpha_i)- \F(s,y_i,\tilde\w_i,\tilde\alpha_i)\right] \chi_{\{\sigma: \ \eta_i(\sigma)=0\}}(s)  \,ds \\
\qquad \ds- \int_0^{\rho_i}  p \cdot \left[\F(s,y_i,\hat\w_i,\hat\alpha_i)- \F(s,y_i,\tilde\w_i,\tilde\alpha_i)\right] \chi_{\{\sigma: \ \eta_i(\sigma)=1\}}(s)\,ds \\
\qquad \qquad -2\rho_i(K_{_\F}\|p_i-p\|_{L^\infty} +\tilde L\rho_i^2 )\\[1.5ex]
\ds \ \ \ge  \mu(\{0\})\,\delta\,\ell(\rho_i, 1-\eta_i(\cdot))
 - 2K_{_\F}K_{_h} \,\ell(\rho_i, \eta_i(\cdot)) -2\rho_i(K_{_\F}\|p_i-p\|_{L^\infty} +\tilde L\rho_i^2 )\\[1.5ex]
\ds \ \ \ge \rho_i\, \big[\mu(\{0\})\,\delta -\mu(\{0\})\,\delta\,\rho_i-2K_{_\F}K_{_h}\rho_i-2K_{_\F} \|p_i-p\|_{L^\infty} -2\tilde L\rho_i^2\big]>0,
\end{array}
$$  
where   we use the facts that $\ell(\rho_i,\eta_i(\cdot))\le \rho_i^2$ and consequently  $\ell(\rho_i, 1-\eta_i(\cdot))\ge \rho_i- \rho_i^2$, which follow from  \eqref{ek}.  Thus, we obtain the desired contradiction.

\subsection{Proof of Theorem \ref{Th1}}\label{SubTh1}
 Preliminarily, observe that hypothesis {\bf (H3)} can be reduced to {\bf (H3)$'$}. We can clearly take $k \geq 1$ in assumption {\bf (H3)}, but actually we may (and we do) assume without loss of generality  $k\equiv1$. Indeed, reasoning as in \cite[Sec. 2]{C05}, we can introduce the   time change $t=\sigma(s):= \int_{0}^s k(\tau) d\tau$, so that $(\hat{\underline \w}, \hat{\underline\alpha}, \hat\lambda,\hat y) := (\underline \w,\underline\alpha, \lambda,y) \circ \sigma^{-1}$ is a process for the {\em transformed} problem, with dynamics $\hat F = \frac{1}{k}  \sum_{j=0}^n\lambda^j\, \F(s,y,\w^j,\alpha^j) $, verifying  {\bf (H3)} for $k\equiv 1$,  and   interval $[0, \sigma(S)]$,  if and only if $(\underline \w,\underline\alpha,  \lambda, y)$ is a process for the relaxed problem. Furthermore,  the transformed process, say $\hat z$,  corresponding to $\bar z:=(\underline{\bar\w},\underline{\bar\alpha},{\bar\lambda},\bar\xi,\bar y)$ is isolated for the transformed problem, and if $\hat z$ is an abnormal extremal for the transformed problem for some $(\hat p,0,\hat\mu,\hat m)$  as in Definition  \ref{Dnormal}, then $\bar z$ is an   abnormal extremal  with $(p,\gamma,\mu,m)$ verifying $p = \hat p \circ \sigma$, $\gamma=0$,  $d\mu = k \,d\hat\mu$ and $m= \hat m\circ \sigma$. 
\vsm
First of all, we notice that $(\bar\xi, \bar y)$  is a solution of the differential inclusion 
$$
\big(\dot\xi, \dot y \big)(s) \in \text{{\rm co} } \bigcup_{k=0}^n \{ (e^k,  \F(s, y(s), \bar \w ^k (s), \bar \alpha^k(s)) )    \} \qquad \text{a.e. $s\in [0,S]$}.
$$ 
Let us fix a sequence $\varepsilon_i\downarrow 0$. By the Relaxation Theorem \cite[Th. 2.7.2]{OptV}, there exists a sequence of extended processes $(\bar\w_i,\bar\alpha_i,\bar\lambda_i)(s)\in\bigcup_{k=0}^n\{(\bar\w^k(s),\bar\alpha^k(s),e^k)\}$ for a.e. $s \in [0,S]$ such that, for any $i$,  the corresponding trajectory $(\bar \xi_i, \bar y_i) := (\xi,y)[\bar\w_i,\dots,\bar\w_i,\bar\alpha_i,\dots,\bar\alpha_i, \bar\lambda_i]$ satisfies 
$$
\| (\bar\xi_i,\bar y_i) - (\bar \xi,\bar y) \|_{L^{\infty}} \leq \varepsilon_i. 
$$
Let $\mathcal{J}(\cdot)$, and $(\rho_i)_i$ be as in the proof of Theorem \ref{Th2} and $(\delta_i)_i$ such that for every $\w \in \W$ with $\| \w - \bar\w_i\|_{L^1} \leq \delta_i$, then $\| y[\w,\bar\alpha_i] - \bar y_i  \|_{L^{\infty}} \leq \varepsilon_i$. Then, thanks to hypothesis {\bf (H1)}, for any $i$ there exists $ \check \w_i \in \V_{\delta_i}$ such that $\| \check \w_i -\bar \w_i \|_{L^1} \leq \delta_i$. As a consequence, if we define $\check\alpha_i \equiv \bar\alpha_i$, $\check \lambda_i \equiv \bar\lambda_i$ and $\check y_i = y[\check\w_i,\check\alpha_i ]$, then $\check y_i$ is a strict sense trajectory that satisfies $\| \check y_i  - \bar y_i \|_{L^{\infty}} \leq \varepsilon_i$ and 
$$
(\check\w_i,\check\alpha_i,\check\lambda_i)(s)\in\bigcup_{k=0}^n\{(\bar\w^k(s),\bar\alpha^k(s),e^k)\} + (\check r_i(s),0,0)\B \qquad \text{a.e. $s\in[0,S]$,}
$$
for some measurable sequence $\check r_i \to 0$ in $L^1$. Therefore, the process $(\check \w_i,\check\alpha_i,\check{\lambda}_i, \check\xi_i,\check y_i)$ is a $\rho_i^4$-minimizer for the optimal control problem
\begin{equation*}
\left(\check P_i\right)
\begin{cases}
\qquad\qquad\qquad\qquad\qquad\text{minimize } \,\,\,{\mathcal J}(y) \\
 \text{over }(\w,\alpha,\lambda, y,\xi)\in \V_{\delta_i} \times \A \times \Lambda_n^1 \times W^{1,1}([0, S];\R^{n+n}),\text{ satisfying}\\
 \ds (\dot\xi, \dot y)(s) = \big(\lambda(s), \,\,  \F(s,y(s), \w(s), \alpha(s)) \big) \ \text{ a.e. } s \in [0,S]  \\
 (\xi,y)(0)=(0,\xb).
\end{cases}
\end{equation*}
From now on, except for minor obvious changes, the proof proceeds as the proof of Theorem \ref{Th2} and is actually simpler, since we disregard the nondegeneracy issue. 

\end{document}